\documentclass[11pt]{amsart}

\usepackage{amsmath}
\usepackage{amsfonts}
\usepackage{amssymb}
\usepackage{graphicx}
\usepackage[dvipsnames]{xcolor}
\usepackage{amsthm}
\usepackage[english]{babel}
\usepackage{enumerate}
\usepackage{fontawesome5}
\usepackage{hyperref} 
\usepackage{enumitem}
\usepackage[margin=2cm]{geometry}

\usepackage{hyperref}
\usepackage{orcidlink}

\newtheorem{theorem}{Theorem}[section]

\newtheorem{definition}[theorem]{Definition}

\newtheorem{proposition}[theorem]{Proposition}
\newtheorem{lemma}[theorem]{Lemma}
\newtheorem{remark}[theorem]{Remark}

\def\R {\mathbb{R}}
\def\M{H_r^\mu(\R^N)}
\def\H{H^\mu(\R^N)}

\numberwithin{equation}{section}

\title[Hartree type equation and Pohozaev Identity]{Ground state solutions for Hartree type equations driven by superposition operators and Pohozaev Identity}

\author[A. Cannone]{Alessandro Cannone \orcidlink{0009-0006-4073-8298}
}
\address{Alessandro Cannone \newline
	Dipartimento di Matematica, Universit\`{a} degli Studi di Bari Aldo Moro,\newline
	Via Orabona 4, 70125 Bari, Italy.}
\email{alessandro.cannone@uniba.it}

\author[S. Cingolani]{Silvia Cingolani \orcidlink{0000-0002-3680-9106}
}
\address{Silvia Cingolani \newline
	Dipartimento di Matematica, Universit\`{a} degli Studi di Bari Aldo Moro,\newline
	Via Orabona 4, 70125 Bari, Italy.}
\email{silvia.cingolani@uniba.it}

\author[S. Dipierro]{Serena Dipierro 
	\orcidlink{0000-0003-4386-4485}
}
\address{Serena Dipierro \newline
	Department of Mathematics and Statistics, University of Western Australia, \newline 35 Stirling Highway, 6009 Crawley, Australia.}
\email{serena.dipierro@uwa.edu.au}

\begin{document}

\maketitle
\begin{abstract}
We investigate Hartree-type equations driven by a nonlocal operator $\mathcal{L}_\mu$, defined as a superposition of fractional Laplacians through a signed Borel measure $\mu$. Under Berestycki-Lions type assumptions, we prove the existence of a Mountain Pass solution and show that its energy level coincides with the minimum on the Pohozaev manifold. 
\noindent
We also establish the boundedness of non-negative solutions. The proof of this fact requires a careful use of the Sobolev embedding in the
iterative argument and a delicate treatment of the integrals involved in the estimates, as well as a Kato-type inequality in our general setting. 
\noindent
Finally, we establish a general Pohozaev identity for solutions under a suitable summability assumption.
\end{abstract}

\smallskip
\noindent
\textbf{MSC2020:} 
35A01,  35B38, 35J20, 35Q40, 35R11

\smallskip
\noindent
\textbf{Key words:} 
Hartree type equations; superposition operators; fractional laplacian; Pohozaev identity; ground state solutions

\tableofcontents

\section{Introduction}

In this paper, we study the existence of Mountain Pass solutions for a class of Hartree-type equations involving a (possibly nonlocal) operator defined as a superposition of fractional Laplacians of different orders. Specifically, following the setting introduced in~\cite{DPSV},
let $\mu^+$ and $\mu^-$ be two nonnegative finite Borel measures on $[0,1]$, and let $\mu := \mu^+ - \mu^-$ denote the corresponding signed measure. 

The main operator under investigation is then defined as 
\begin{equation}\label{def. superposition operator}
    \mathcal{L}_\mu u := \int_{[0,1]} (-\Delta)^s u \, d\mu(s),
\end{equation} 
where, for any $s \in (0,1)$, $(-\Delta)^s$ denotes the fractional Laplacian, acting on a function $u$ as
\begin{equation}\label{fractional laplacian}
    (-\Delta)^s u(x) := C_{N,s} \int_{\mathbb{R}^N} \frac{2u(x) - u(x+y) - u(x-y)}{|y|^{N+2s}} \, dy.
\end{equation}
The positive normalization constant $C_{N,s}$ is explicitly given by
\begin{equation}\label{constant_definition}
    C_{N,s} = \frac{2^{2s} s \Gamma \left(\frac{N+2s}{2}\right)}{\pi^{N/2} \Gamma(1-s)},
\end{equation}
where $\Gamma$ denotes the Euler Gamma function. This constant is chosen in
such a way that, for any smooth and rapidly decaying function $u$, the Fourier transform of $(-\Delta)^s u$ is given by $(2\pi|\xi|)^{2s}\widehat{u}(\xi)$, see e.g.~\cite[Proposition~3.3]{DPV}.
Furthermore, this definition is consistent with the limit cases $s \to 1$ and $s \to 0$, in the sense that
\begin{equation*}
\lim_{s \to 1} (-\Delta)^s u = -\Delta u \quad \text{and} \quad \lim_{s \to 0} (-\Delta)^s u = u,
\end{equation*}
see e.g.~\cite[Proposition~4.4]{DPV}.

Special cases of the operator $\mathcal{L}_\mu$ include the standard Laplacian (when $\mu$ is the Dirac measure $\delta_1$), the fractional Laplacian $(-\Delta)^{s_\star}$ for some $s_\star \in (0,1)$ (when $\mu = \delta_{s_\star}$), and mixed operators of the form~$(-\Delta)^{s_1} + (-\Delta)^{s_2}$ (when $\mu = \delta_{s_1} + \delta_{s_2}$).
The setting that we consider in this paper also allows us to consider operators of the form~$(-\Delta)^{s_1} -\alpha (-\Delta)^{s_2}$ for some~$\alpha>0$, namely we can have terms with the ``wrong sign''. 
Moreover a ``continuous'' superposition of operators of different fractional orders can be taken into account.
A list of interesting cases that are comprised by this setting is provided
on pages~6-7 of~\cite{DPSV} (and also Section~5 there).

Operators that emerge from superpositions of local and nonlocal operators have been extensively studied in recent research from a variety of perspectives. This includes regularity theory (see~\cite{BCCI12, CDV22, BDVV22, GK22, SVWZ,
GL23, DFM, SVWZ2, AAM26}), existence and nonexistence results (see~\cite{AC21, BDVV25, DPLSV25, DSVZ}),
symmetry results (see~\cite{CS16, BDVV}), 
geometric and variational inequalities (see~\cite{BDVV23, BDVVa}). Furthermore, operators of
this type naturally arise in concrete applications, for instance they
model the dispersal of a biological population whose individuals are subject to different kinds
of diffusive strategies (see~\cite{DV21, DPLV1,DPLV2, DPLSV}).

 \medskip

In our setting, we assume that there exist $\bar s \in (0,1]$ and $\gamma \geq 0$ such that the following conditions hold:
\begin{eqnarray}
   && \mu^+([\bar s, 1])  > 0, \label{positivity property 1} \\
   && \mu^-|_{[\bar s, 1]}  = 0, \label{positivity property 2} \\
  &&  \mu^-([0,\bar s]) \leq \gamma \mu^+([\bar s,1]). \label{positivity property 3}
\end{eqnarray}
Conditions \eqref{positivity property 1} and \eqref{positivity property 2} say that the component of the signed measure~$\mu$ supported on higher fractional exponents is strictly positive. Meanwhile, condition \eqref{positivity property 3} prescribes that the negative part of~$\mu$ is appropriately balanced, or ``reabsorbed'', by the positive part.

We note that, by assumption \eqref{positivity property 1}, there exists $s_\sharp \in [\bar s, 1]$ such that 
\begin{equation} \label{positivity measure 4}
    \mu^+([s_\sharp, 1]) > 0.
\end{equation}
We point out that the choice of $s_\sharp$ is somehow arbitrary: as a matter of fact, $s_\sharp$ will
play the role of a critical exponent and therefore the idea is that one would like to pick $s_\sharp$
``as large as possible'' but still satisfying~\eqref{positivity measure 4}.

The main purpose of this work is to investigate the existence of Mountain Pass type solutions to the following problem involving\footnote{As customary, the symbol~$ \ast$ represents the convolution in~$\R^N$, namely
$$ (g\ast h)(x)=\int_{\R^N} g(y)h(x-y)\,dy.$$} the operator $\mathcal{L}_\mu$:
\begin{equation} \label{problem}
    \mathcal{L}_\mu u + \beta u = (I_\alpha \ast F(u))f(u) \quad \text{in } \mathbb{R}^N,
\end{equation}
where $\beta > 0$, $N \geq 3$, $\alpha \in (0,N)$, and 
$$I_\alpha(x) := C_\alpha |x|^{-(N-\alpha)},$$ for some~$C_\alpha>0$, is the Riesz potential. 

Here $$F(t) := \int_0^t f(s) \, ds$$ and the nonlinearity $f: \mathbb{R} \to \mathbb{R}$ is assumed to satisfy conditions of Berestycki-Lions type (see~\cite{B-L}):
\begin{itemize}
\item [$(F_1)$] $f\in C(\mathbb{R}, \mathbb{R});$
\item[$(F_2)$] there exists $C>0$ such that, for every $t \in \mathbb{R}$, 
\begin{align*} |tf(t)|\leq C\Big(|t|^{\frac{N+\alpha}{N}}+|t|^{\frac{N+\alpha}{N-2s_{\sharp}}}\Big);
\end{align*}
    \item [$(F_3)$]$F$ satisfies
    \begin{align*}
        i)\lim\limits_{t\to 0^+} \frac{F(t)}{|t|^{\frac{N+ \alpha}{N}}}=0, \qquad\qquad ii)\lim\limits_{t\to +\infty} \frac{F(t)}{|t|^{\frac{N+\alpha}{N-2s_{\sharp}}}}=0;
    \end{align*}
    \item[$(F_4)$] there exists $t_0 \in \mathbb{R}\setminus\{0\}$ such that $F(t_0)\not=0.$
\end{itemize}
We recall that the value of~$s_\sharp$ is determined according to condition \eqref{positivity measure 4}.

Throughout the rest of the paper, unless otherwise specified, we will assume that the assumptions in~$(F_1)$--$(F_4)$ are in force.

When~$\mu$ is the Dirac mass concentrated at $s=1$, and therefore the operator~$\mathcal{L}_\mu$
boils down to the classical Laplacian~$-\Delta$, the 
equation in~\eqref{problem} has been dealt with in~\cite{MV}. Moreover,
when $\mu$ is the Dirac mass concentrated at some $s_\star \in (0,1)$, and thus~$\mathcal{L}_\mu$ reduces to the
fractionan Laplacian~$(-\Delta)^{s_\star}$, the equation in~\eqref{problem} has been taken into account
in~\cite{CGT}.
\medskip

The investigation of nonlocal models such as equation \eqref{problem} allows for the analysis of complex systems characterized by long-range interactions and non-standard diffusion regimes. The distributed order operator $\mathcal{L}_\mu$ serves as a natural and effective generalization of the fractional Laplacian $(-\Delta)^s$ by accounting for a superposition of various spatial scales through the measure $\mu(s)$. Whenever this measure is concentrated at a single point $s \in (0,1)$, the model recovers the standard fractional Laplacian structure, which has been extensively documented in the literature to describe anomalous diffusion processes. For instance, such operators arise in population dynamics \cite{CDV, DPLV1, DPLSV}, where a range of diffusion exponents allows for the modeling of specific ecological environments. 

In the quantum realm, the Choquard component is essential for analyzing the mean-field limit of weakly interacting molecules \cite{AM, DSS, HMT} and their orbital stability \cite{CFHMT}, further extending to the physics of graphene \cite{LMM} to characterize particle interactions. This theoretical versatility ultimately culminates in astrophysical applications concerning the study of exotic stars, such as pseudorelativistic boson stars and white dwarfs \cite{ES, FJL1, FJL2, FL, HL, L1, L2, LL, LY1, HLLS}.
In this framework, the descriptive power of equation \eqref{problem} is fully realized when the measure $\mu$ is concentrated in a three-dimensional setting ($N=3$) at $s=1/2$. In this configuration, the model recovers the massless boson stars equation
\begin{equation*}
    \sqrt{-\Delta} u + \beta u = \left( \frac{1}{4\pi |x|} \ast u^2 \right)u \quad \text{in } \mathbb{R}^3,
\end{equation*}
where the pseudorelativistic operator $\sqrt{-\Delta+m}$ collapses into the square root of the Laplacian \cite{FrankLenzmann, HL, LL}. The analysis of ground state solutions for such a system not only ensures an understanding of gravitational equilibrium states but also provides vital insights into critical initial conditions and the Chandrasekhar limiting mass~\cite{L1}.
\medskip

The main contributions of the present paper are summarized in the following statements. Our first result establishes the existence of a solution of Mountain Pass type.

\begin{theorem}\label{MP solution} 
There exists~$\gamma_0>0$, depending on~$N$, $\alpha$, $\beta$, $\mu^+$ and $\bar s$,
such that for all~$\gamma\in[0,\gamma_0]$ the following statement holds true.

Let~$\mu$ satisfy~\eqref{positivity property 1}, \eqref{positivity property 2}, and~\eqref{positivity property 3}. 
Then, there exists a non trivial, radially symmetric weak solution~$\bar{u}$ to problem~\eqref{problem}
which is of Mountain Pass type and satisfies the Pohozaev identity
\begin{eqnarray*}&&
\int_{[0,1]}\left( \frac{(N-2s)C_{N,s}}{2}\iint_{\R^{2N}}\frac{(\bar u(x)-\bar u(y))^2}{|x-y|^{N+2s}}\,dx\,dy\right)\, d\mu(s)  +\frac{\beta N}{2}\int_{\R^N}\bar{u}^2(x)\,dx\\&&\qquad\qquad=\frac{N+\alpha}{2}\int_{\R^N} (I_\alpha\ast F(\bar u))F(\bar u)\,dx.
\end{eqnarray*}
\end{theorem}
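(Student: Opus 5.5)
The plan follows the Berestycki--Lions/Jeanjean strategy as adapted to Choquard equations by Moroz--Van Schaftingen, and carries it out in the radial space $\M$.

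\textbf{Functional setting.} Let $\mathcal{X}=\H$ be normed by
\[
\|u\|^2=\int_{[\bar s,1]}[u]_s^2\,d\mu^+(s)+\beta\|u\|_2^2,
\qquad [u]_s^2=C_{N,s}\iint_{\R^{2N}}\frac{(u(x)-u(y))^2}{|x-y|^{N+2s}}\,dx\,dy .
\]
The energy is
\[
J(u)=\tfrac12\int_{[0,1]}\tfrac{[u]_s^2}{2}\,d\mu(s)+\tfrac{\beta}{2}\|u\|_2^2-\tfrac12\int_{\R^N}(I_\alpha\ast F(u))F(u).
\]
(I keep the paper's normalization, with a factor $2$ relating $[u]_s^2$ to $\|(-\Delta)^{s/2}u\|_2^2$.)

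The first step is a reabsorption estimate. Using Fourier analysis, $|\xi|^{2s}\le 1+|\xi|^{2\bar s}$ for $s\le\bar s$, together with \eqref{positivity property 3}, one gets
\[
\int_{[0,\bar s]}[u]_s^2\,d\mu^-\le c\,\gamma\,\|u\|^2 .
\]
Hence for $\gamma\le\gamma_0$ small, depending on $N,\alpha,\beta,\mu^+,\bar s$, the quadratic part of $J$ is bounded below by $c_0\|u\|^2$. This is where $\gamma_0$ is produced.

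\textbf{Continuity and geometry.} By the embedding $\mathcal{X}\hookrightarrow L^p$ for $p\in[2,2N/(N-2s_\sharp)]$, the condition $(F_2)$ and Hardy--Littlewood--Sobolev, $J\in C^1$. By $(F_3)$, for $\|u\|$ small,
\[
\int(I_\alpha\ast F(u))F(u)=o(\|u\|^2),
\]
so $J\ge c\|u\|^2$ near $0$. Using $(F_4)$ and the Berestycki--Lions construction, take $w$ radial with $\int(I_\alpha\ast F(w))F(w)>0$. Then $w_t(x)=w(x/t)$ gives
\[
J(w_t)=\tfrac12\int t^{N-2s}\tfrac{[w]_s^2}{2}\,d\mu+\tfrac{\beta}{2}t^N\|w\|_2^2-\tfrac12 t^{N+\alpha}\int(I_\alpha\ast F(w))F(w)\to-\infty
\]
as $t\to\infty$, since $N+\alpha>N\ge N-2s$ and $\mu$ is finite. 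So mountain pass geometry holds on $\M$, with level $c>0$.

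\textbf{Pohozaev–Palais–Smale sequence and compactness.} Following Jeanjean, apply the mountain pass theorem to the augmented functional $\tilde J(\sigma,u)=J(u(e^{-\sigma}\cdot))$ on $\R\times\M$. This gives a Palais--Smale sequence $(u_n)$ at level $c$ with $P(u_n)\to0$, where $P$ is the Pohozaev functional in the statement, namely $(N-2s)/2$ weights, $\beta N/2$, and $(N+\alpha)/2$. The combination
\[
J(u_n)-\tfrac{1}{N+\alpha}P(u_n)
\]
equals a coercive quadratic expression. Its coefficients are $(\alpha+2s)/(2(N+\alpha))>0$ on the $\mu^+$ part and $\alpha\beta/(2(N+\alpha))$ on the $L^2$ part; the $\mu^-$ part is again reabsorbed for small $\gamma$. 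Hence $(u_n)$ is bounded.

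By the compact radial embedding $\M\hookrightarrow L^p$ for $2<p<2N/(N-2s_\sharp)$, combined with $(F_3)$ (a Strauss-type compactness lemma) and HLS, one gets the following, where $u_n\rightharpoonup\bar u$:
\[
\int(I_\alpha\ast F(u_n))F(u_n)\to\int(I_\alpha\ast F(\bar u))F(\bar u),
\qquad
\int(I_\alpha\ast F(u_n))f(u_n)\varphi\to\int(I_\alpha\ast F(\bar u))f(\bar u)\varphi .
\]
So $\bar u$ is a weak solution. It is nontrivial because if $\bar u=0$, then $P(u_n)\to0$ and the nonlinear term tends to $0$, which force $\|u_n\|\to0$, contradicting $c>0$.

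\textbf{Pohozaev identity for $\bar u$.} The identity should not be obtained by passing to the limit in $P(u_n)$, because only weak lower semicontinuity is available there. Instead I will establish it directly for the solution. My first option is the regularity route: the later boundedness result plus a truncation/dilation argument, i.e. testing with $x\cdot\nabla\bar u$ approximated via difference quotients of $\bar u(\lambda x)$ at $\lambda=1$. Concretely, $\lambda\mapsto J(\bar u(\cdot/\lambda))$ is differentiable, and its derivative at $1$ equals $\langle J'(\bar u),\,\text{generator}\rangle$ once the required summability holds. This is the main obstacle. If that fails, I will use the alternative route: show that $u_n\to\bar u$ strongly by comparing $J'(u_n)u_n\to0$ with $J'(\bar u)\bar u=0$, which gives convergence of the quadratic norm (reabsorbing $\mu^-$ via its compactness on the lower orders). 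Then
\[
P(\bar u)=\lim_n P(u_n)=0 ,
\]
and $\bar u$ attains level $c$, hence is of Mountain Pass type.
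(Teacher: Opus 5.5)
Your argument is correct and is essentially the paper's proof. The paper also works in $\M$ and obtains a sequence with $\mathcal J_\beta(u_n)\to l(\beta)$, $\mathcal J'_\beta(u_n)\to0$, $\mathcal P_\beta(u_n)\to0$, through the deformation Lemma~\ref{defomation Lemma}, which rests on the same augmented-functional mechanism as your Jeanjean construction. It then bounds the sequence via $(N+\alpha)\mathcal J_\beta-\mathcal P_\beta$, with the $\mu^-$ part reabsorbed for small $\gamma$, and gets strong convergence in $\M$ from radial compactness plus coercivity (Proposition~\ref{palais-smale-prop}). Hence $\mathcal P_\beta(\bar u)=\lim_n\mathcal P_\beta(u_n)=0$ and $\mathcal J_\beta(\bar u)=l(\beta)$, which is exactly your fallback route.

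Make that fallback your main argument and drop the regularity route. That route is not available here: Theorems~\ref{THM:LINFTY} and~\ref{pohozaev theorem} need $\mu^-=0$ together with a sign condition, respectively $W^{1,p}_{loc}$ and $L^q_{loc}$ information, none of which is known for $\bar u$. Likewise, your remark that only weak lower semicontinuity is available is wrong in the radial space, since the compact embedding~\eqref{aggcompatto} is precisely what upgrades weak to strong convergence.
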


We refer to Section~\ref{Variational Setting and existence result} for the precise definition of weak solution (see in particular Definition~\ref{weak solution}). 

What is more, we can characterize the energy level of the solution found in Theorem~\ref{MP solution}, showing that
it coincides with the minimum level on the Pohozaev manifold
(the technical definitions of Mountain Pass level and Pohozaev minimum level will be provided in formulas~\eqref{mplevel00} and~\eqref{poholevel00} below). As a consequence, we have the following: 

\begin{theorem}\label{p=l}
The solution~$\bar u$ found in Theorem~\ref{MP solution}
minimizes the energy among all the solutions satisfying the Pohozaev identity.
\end{theorem}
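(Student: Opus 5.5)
Denote by $I$ the energy functional associated with \eqref{problem} on the space $\H$:
$$I(u)=\frac12\int_{[0,1]}[u]_s^2\,d\mu(s)+\frac{\beta}{2}\|u\|_2^2-\frac12\int_{\R^N}(I_\alpha\ast F(u))F(u)\,dx,$$
with $[u]_s^2=C_{N,s}\iint_{\R^{2N}}\frac{(u(x)-u(y))^2}{|x-y|^{N+2s}}\,dx\,dy$ up to the normalizing factor. Let $P(u)$ be the Pohozaev functional, i.e. the left-hand side minus the right-hand side of the identity in Theorem~\ref{MP solution}. Set $\mathcal P=\{u\neq0:\ P(u)=0\}$, $c_{mp}$ the Mountain Pass level and $p:=\inf_{\mathcal P} I$. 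The theorem follows once we prove $c_{mp}=p$. Indeed, every nontrivial solution satisfying the Pohozaev identity lies in $\mathcal P$, so its energy is at least $p$. On the other hand, $\bar u\in\mathcal P$ by Theorem~\ref{MP solution}, and $I(\bar u)=c_{mp}$.

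\emph{Step 1: $c_{mp}\le p$.} I use the dilation $u_t(x)=u(x/t)$, $t>0$. By scaling, $[u_t]_s^2=t^{N-2s}[u]_s^2$, $\|u_t\|_2^2=t^N\|u\|_2^2$, and the Choquard term scales like $t^{N+\alpha}$. Hence
$$\varphi(t):=I(u_t)=\frac12\int t^{N-2s}[u]_s^2\,d\mu(s)+\frac{\beta}{2}t^N\|u\|_2^2-\frac{t^{N+\alpha}}{2}D(u),$$
where $D(u)=\int (I_\alpha\ast F(u))F(u)$, and $t\varphi'(t)=P(u_t)$. Take $u\in\mathcal P$. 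Then $D(u)>0$, since otherwise $P(u)=0$ forces the positive quadratic part to vanish. Here I use \eqref{positivity property 1}--\eqref{positivity property 3} with $\gamma\le\gamma_0$: the negative part of $\mu$ is absorbed, so $\int[u]_s^2\,d\mu+\beta\|u\|_2^2\gtrsim\|u\|^2$ and $\int (N-2s)[u]_s^2\,d\mu+\beta N\|u\|_2^2>0$ for $u\ne0$. Since the top power $t^{N+\alpha}$ dominates, $\varphi(t)\to-\infty$ as $t\to\infty$, while $u_t\to0$ in $\H$ as $t\to0$. Thus $t\mapsto u_{t}$, suitably reparametrized on $[0,T]$ with $T$ large, is an admissible path. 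It remains to check $\max_t\varphi=\varphi(1)=I(u)$. For this I write
$$t^{-N-\alpha}\,t\varphi'(t)=\frac12\int (N-2s)t^{-\alpha-2s}[u]_s^2\,d\mu+\frac{\beta N}{2}t^{-\alpha}\|u\|_2^2-\frac{N+\alpha}{2}D(u).$$
When $\mu^-=0$ this is strictly decreasing in $t$, so $t=1$ is the unique critical point and a maximum. With $\mu^-\ne0$ the low-order negative terms threaten monotonicity. I would control them as follows: for $s<\bar s$, use $t^{-\alpha-2s}[u]_s^2$ combined with interpolation $[u]_s^2\le [u]_{\bar s}^{2s/\bar s}\|u\|_2^{2(1-s/\bar s)}$ and Young's inequality, so that their derivative is dominated by the derivatives of the $\bar s$ and $L^2$ terms when $\gamma\le\gamma_0$. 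This is where $\gamma_0$ enters, and I expect it to be the main obstacle. If exact monotonicity is out of reach, it suffices to show that $\varphi$ has a unique critical point (a unique sign change of $\varphi'$), which is what the smallness of $\gamma$ should deliver.

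\emph{Step 2: $p\le c_{mp}$.} I show every path $g\in\Gamma$ crosses $\mathcal P$. Near $0$ we have $P(u)\ge c\|u\|^2-o(\|u\|^2)>0$ by $(F_2)$, $(F_3)$, the Hardy--Littlewood--Sobolev inequality, and the coercivity above. At the endpoint, $I(g(1))<0$ gives $P(g(1))<0$, because $P(u)-(N+\alpha)I(u)=-\frac12\int(\alpha+2s)[u]_s^2\,d\mu-\frac{\beta\alpha}{2}\|u\|_2^2<0$ for $u\neq0$, again using the absorption of $\mu^-$. By continuity there is $\tau$ with $g(\tau)\in\mathcal P$. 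Hence $\max_{[0,1]} I\circ g\ge p$, and so $c_{mp}\ge p$.

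Combining Steps 1 and 2 gives $c_{mp}=p=I(\bar u)$, so $\bar u$ attains $\min_{\mathcal P}I$. Since every nontrivial solution satisfying the Pohozaev identity belongs to $\mathcal P$, this proves Theorem~\ref{p=l}.
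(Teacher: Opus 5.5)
\noindent Your architecture is the paper's. You get $p\le c_{mp}$ because $\bar u$ lies on the Pohozaev set at the Mountain Pass level, and $c_{mp}\le p$ via the dilation path $t\mapsto u_t$, which requires showing that $\varphi$ peaks at $t=1$.

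The gap is precisely this peak when $\mu^-\neq0$, which is exactly where the smallness of $\gamma$ has to be used. You call it the main obstacle and say smallness of $\gamma$ ``should deliver'' a unique critical point, but you do not prove it. As sketched, the interpolation step is also incomplete. First, $\varphi$ contains no ``$\bar s$ term'' unless $\mu^+(\{\bar s\})>0$, so $[u]_{\bar s}$ must in turn be bounded by $\int_{[\bar s,1]}t^{-2s}[u]_s^2\,d\mu^+(s)$. Second, the whole comparison must be uniform in $t$.

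The missing observation is that the $t$-weights are just the seminorms of the dilated function, $t^{N-2s}[u]_s^2=[u_t]_s^2$. So the absorption inequality, valid for every function, applies at every scale. For $h(t):=t^{-N-\alpha}P(u_t)$ you get
\[
t^{N+\alpha+1}h'(t)=-\frac12\Big(\int_{[0,1]}(N-2s)(2s+\alpha)[u_t]_s^2\,d\mu^+(s)-\int_{[0,\bar s]}(N-2s)(2s+\alpha)[u_t]_s^2\,d\mu^-(s)+\alpha\beta N\|u_t\|_2^2\Big).
\]
Bound $(N-2s)(2s+\alpha)$ below by $(N-2)\alpha$ in the first integral and above by $N(2+\alpha)$ in the second. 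Proposition~\ref{assorb measure} applied to $u_t$ then makes the bracket positive whenever $\gamma<\min\big\{\frac{(N-2)\alpha}{N(2+\alpha)},\frac{\alpha\beta}{2(2+\alpha)\mu^+([\bar s,1])}\big\}$. So $h$ is strictly decreasing with $h(1)=P(u)=0$, hence $\varphi'>0$ on $(0,1)$ and $\varphi'<0$ on $(1,+\infty)$. Your interpolation--Young bound $t^{-2s}[u]_s^2\le t^{-2\bar s}[u]_{\bar s}^2+\|u\|_2^2$ is just Lemma~\ref{fractional immersion} for $u_t$ divided by $t^N$.

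This is the paper's computation in another form. The quantity $\varphi''(\bar t)-\frac{N-1+\alpha}{\bar t}\varphi'(\bar t)$ that the paper shows to be negative via Proposition~\ref{assorb measureBIS} equals $\bar t^{\,N+\alpha-1}h'(\bar t)$. The paper evaluates it only at critical points: each is then a strict local maximum, so there is just one. Your monotonicity formulation gives the sign on all of $(0,+\infty)$, which is slightly cleaner.

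Two smaller points. Step~2 is correct but superfluous. As you note yourself, $\bar u\in\mathcal P$ with $I(\bar u)=c_{mp}$ already gives $p\le c_{mp}$, and that is how the paper argues. If you keep Step~2, pick $\tau$ after the last zero of $g$ so that $g(\tau)\neq0$.

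Also, work in $H^\mu_r(\R^N)$ throughout, as the paper does; dilations preserve radial symmetry, so your argument runs unchanged there. $\bar u$ is only known to sit at the radial Mountain Pass level. So $I(\bar u)=c_{mp}$ is not justified if $c_{mp}$ and $\mathcal P$ are taken in the whole of $H^\mu(\R^N)$.
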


We point out that the presence of operators with the ``wrong sign'' in~$\mathcal{L}_\mu$ makes it
more complicated to establish the existence of a Mountain Pass solution. Indeed, 
the smallness assumption on the parameter~$\gamma$ plays a role in ``reabsorbing'' the negative part of the energy coming
from the operator, see
Propositions~\ref{assorb measure} and~\ref{assorb measureBIS}.
We also remark that this reabsorbing property is different from the one provided in~\cite[Proposition~2.3]{DPSV},
since it is unavoidable in our setting that the~$L^2$-norm of the function pops up in the right-hand side
of the inequality, and this requires an additional step to reabsorb all the negative terms. 

In the forthcoming Proposition~\ref{qwedfrgthbncmx98ty8yi4erwyt8} we will also establish that there exists a constant sign solution
to~\eqref{problem} under the stronger assumption that~$\mu^-=0$.
We mention here that the assumption~$\mu^-\equiv0$ is technical,
and we will further investigate the possibility of finding a solution with constant sign without this stronger condition (but still assuming some sort of ``smallness'' of the parameter~$\gamma$) in a future work.

This provides a motivation to investigate qualitative properties of non-negative solutions. In this direction, we provide
a preliminary step towards regularity, namely we show boundedness of non-negative solutions in the case in which~$\mu^-=0$.

\begin{theorem}\label{THM:LINFTY} Assume that~$\mu^-=0$.
Suppose that there exists~$\overline{s}_\sharp\in[s_\sharp, 1]$ such that
\begin{equation}\label{ass9567daskjytrTRYJ}\mu([s_\sharp,\overline{s}_\sharp])>0.\end{equation}

Let $u\in H^{\mu}(\R^N)$ be a non-negative solution of \eqref{problem}. Then $u \in L^{\infty}(\R^N)$. 
\end{theorem}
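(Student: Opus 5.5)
We prove Theorem~\ref{THM:LINFTY} with a Moser-type iteration, in the spirit of the argument for Choquard equations with the fractional Laplacian in~\cite{CGT}. The operator $\mathcal{L}_\mu$ is handled through the energy restricted to $[s_\sharp,\overline{s}_\sharp]$. Since $\mu^-=0$, every fractional seminorm appears in the energy with a nonnegative weight. In particular,
\[
\int_{[0,1]}[v]_s^2\,d\mu(s)+\beta\|v\|_{L^2}^2\ \ge\ c\,\|v\|_{H^{s_\sharp}}^2 .
\]
To obtain this, we use $[v]_s^2\ge c\,[v]_{s_\sharp}^2-C\|v\|_2^2$ for $s\ge s_\sharp$ (via Fourier: $|\xi|^{2s}\ge|\xi|^{2s_\sharp}-1$), uniformly in $s\in[s_\sharp,\overline{s}_\sharp]$, and the assumption $\mu([s_\sharp,\overline{s}_\sharp])>0$. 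The point of $\overline{s}_\sharp$ is to stay away from $s=1$, where the constants $C_{N,s}$ degenerate, or alternatively to write everything on the Fourier side so that it is uniform. This yields the Sobolev inequality $\|v\|_{L^{2^*_{s_\sharp}}}^2\le C\int[v]_s^2\,d\mu+C\beta\|v\|_2^2$, with $2^*_{s_\sharp}=2N/(N-2s_\sharp)$.

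\textbf{Step 1 (the nonlocal coefficient).} The goal is to show $I_\alpha\ast F(u)\in L^\infty$, or at least that it lies in a suitable $L^r$ space so that the right-hand side behaves like $V(x)(u+u^{p-1})$ with controlled $V$. By $(F_2)$, $|F(u)|\lesssim u^{\frac{N+\alpha}{N}}+u^{\frac{N+\alpha}{N-2s_\sharp}}$. Since $u\in L^2\cap L^{2^*_{s_\sharp}}$, Hardy--Littlewood--Sobolev gives only $I_\alpha\ast F(u)\in L^{r}$ for a range of $r$, not $L^\infty$. We therefore follow Moroz--Van Schaftingen~\cite{MV}: write $|f(u)|\le C(u^{\alpha/N}+u^{\frac{\alpha+2s_\sharp}{N-2s_\sharp}})$ and split $(I_\alpha\ast F(u))f(u)\le (H+K)u$. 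Here $H,K$ belong to $L^{2N/\alpha}+L^{2N/(\alpha+2s_\sharp)}$-type spaces, the critical exponents for the ``potential'' $V$ in a Brezis--Kato scheme. The key lemma to prove is then of the following form. If $u\ge0$ solves $\mathcal{L}_\mu u+\beta u\le Vu$ weakly, with $V\in L^{N/(2s_\sharp)}+L^\infty$ up to the HLS splitting, then $u\in L^p$ for all $p\in[2,\infty)$. The proof of this lemma is a truncation argument using the smallness of the tail of $V$ in $L^{N/(2s_\sharp)}$.

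\textbf{Step 2 (iteration).} Test the equation with $\varphi=u\,u_T^{2(\beta_k-1)}$, where $u_T=\min\{u,T\}$, which is admissible in $H^\mu$. We need a Kato/Stroock--Varopoulos-type inequality for each $s$:
\[
(u(x)-u(y))(\varphi(x)-\varphi(y))\ge c_{\beta_k}\,(w(x)-w(y))^2, \qquad w=u\,u_T^{\beta_k-1},
\]
with $c_{\beta_k}\sim 1/\beta_k$. This is pointwise in $(x,y)$, hence integrable against $d\mu(s)$ because $\mu\ge0$, and it gives
\[
\int[w]_s^2\,d\mu\lesssim \beta_k\int V u\,\varphi .
\]
Combined with the Sobolev inequality above, this yields $\|u\|_{L^{\beta_k 2^*_{s_\sharp}}}\le (C\beta_k)^{1/\beta_k}\|u\|_{L^{\beta_k q}}$, where $q<2^*_{s_\sharp}$ once Step~1 places $V$ in some $L^{r}$ with $r>N/(2s_\sharp)$ locally. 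This last condition is reached by bootstrapping Step~1: $u\in L^p$ for all $p$ improves the integrability of $I_\alpha\ast F(u)$, which becomes bounded by HLS and Hölder, since $F(u)\in L^{a}\cap L^{b}$ with $a<N/\alpha<b$. Once $V\in L^\infty$, the standard Moser iteration with $\beta_{k+1}=\beta_k\,2^*_{s_\sharp}/2$ and $\prod(C\beta_k)^{1/\beta_k}<\infty$ gives $\|u\|_\infty<\infty$.

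The main obstacle is Step~1 together with the pointwise Kato inequality under the superposition. The inequality must be uniform in $s$, and all estimates must be done at the level of the Gagliardo double integrals weighted by $C_{N,s}\,d\mu(s)$, using only the $s_\sharp$-embedding. The critical growth $\frac{N+\alpha}{N-2s_\sharp}$ makes the first integrability gain delicate, so we would first try the Brezis--Kato truncation argument of~\cite{MV}, adapted with the fractional Sobolev constant coming from the $[s_\sharp,\overline{s}_\sharp]$ portion of the measure.
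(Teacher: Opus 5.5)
Your Step~2 is sound. Step~1, however, carries the real difficulty, and as formulated it would not go through. You reduce to a local Brezis--Kato lemma for $\mathcal{L}_\mu u+\beta u\le Vu$ with $V\in L^{N/(2s_\sharp)}+L^\infty$, but the Choquard term cannot be dominated this way.

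The natural potential $V=(I_\alpha*F(u))f(u)/u$ contains the factor $f(u)/u$, and $(F_2)$ only gives $|f(u)|/u\le C\big(u^{\frac{\alpha}{N}-1}+u^{\frac{\alpha+4s_\sharp-N}{N-2s_\sharp}}\big)$. The first term blows up where $u\to0$, a set of infinite measure, while $I_\alpha*F(u)$ decays only like $|x|^{-(N-\alpha)}$. For instance, suppose $F\ge0$, $f(t)$ is of order $t^{\alpha/N}$ near $0$ (up to the slowly varying factor permitted by $(F_3)$), and $u$ decays faster than $|x|^{-N}$. Then $V\to+\infty$ at infinity, so $V\notin L^{N/(2s_\sharp)}+L^\infty$.

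You have also mixed up two critical classes. $L^{N/(2s_\sharp)}$ is the critical class for a local potential. $L^{2N/\alpha}+L^{2N/(\alpha+2s_\sharp)}$ is the critical class for the pair $H=F(u)/u$, $K=f(u)$ in the bilinear term $(I_\alpha*(Hu))K$. That bilinear form is what Moroz--Van Schaftingen work with, not a pointwise bound $(I_\alpha*F(u))f(u)\le(H+K)u$. The lower-critical growth $|t|^{\frac{N+\alpha}{N}}$ forces you to keep this nonlocal structure, and ``up to the HLS splitting'' is exactly where the missing argument sits.

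The missing ingredient is the nonlocal estimate of Lemma~\ref{nonlocal_estimate}: for $\theta\in(\frac{\alpha}{N},2-\frac{\alpha}{N})$,
\[
\int_{\R^N}(I_\alpha*(H|v|^\theta))K|v|^{2-\theta}\,dx\le\varepsilon^2\|(-\Delta)^{s_\sharp/2}v\|_2^2+C_\varepsilon\|v\|_2^2 .
\]
You apply it with $v=|T_k(u)|^{r/2}$ and $\theta=2/r$, and absorb the $\varepsilon^2$-term into $\int_{[0,1]}[v]_s^2\,d\mu(s)$ via Proposition~\ref{Prop H mu in H s}. You also need an approximation, such as Lax--Milgram solutions with truncated $H_n,K_n$ as in Proposition~\ref{L^r regularity}, so that the tail term with $\chi_{\{|u_n|>k\}}$ is finite and vanishes as $k\to+\infty$.

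Since $\theta=2/r$ must stay above $\alpha/N$, this gives $u\in L^r$ only for $r<\frac{N}{\alpha}\frac{2N}{N-2s_\sharp}$, not for all $r<\infty$ as your lemma asserts. That range is still enough: it puts $F(u)$ in $L^a\cap L^b$ with $a<N/\alpha<b$, hence $I_\alpha*F(u)\in L^\infty$ (Proposition~\ref{regularity convol}).

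From there your Step~2 is a valid and simpler alternative to the paper's argument:
\begin{itemize}
\item Your pointwise inequality for $\varphi=u\,u_T^{2(\beta_k-1)}$ and $w=u\,u_T^{\beta_k-1}$ holds with $c_{\beta_k}=(2\beta_k-1)/\beta_k^2$ by Cauchy--Schwarz. It replaces the convex Kato inequality of Proposition~\ref{lem:kato} and the smoothing of Lemma~\ref{lemma:p488approx}.
\item Your single inhomogeneous Sobolev bound at $s_\sharp$ is just Proposition~\ref{Prop H mu in H s}; no degeneration of $C_{N,s}$ at $s=1$ is involved. It replaces the bookkeeping with $\inf_{s\in[s_\sharp,\overline{s}_\sharp]}\|\cdot\|_{2^*_s}$.
\item Because $\frac{\alpha+2s_\sharp}{N-2s_\sharp}<2^*_{s_\sharp}-1$, the potential $u^{\frac{\alpha+4s_\sharp-N}{N-2s_\sharp}}\chi_{\{u>1\}}$ lies in some $L^r$ with $r>\frac{N}{2s_\sharp}$. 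So you avoid the paper's critical-growth first step with $\beta_1=\frac{2^*_{s_\sharp}+1}{2}$ and tail smallness.
\end{itemize}

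One correction to Step~2: $(I_\alpha*F(u))f(u)/u$ never becomes bounded, so ``once $V\in L^\infty$'' is not right. Treat $\{u\le1\}$ as a source term, controlled by $u\in L^2$ and the integrability of $I_\alpha*F(u)$, and use a potential only on $\{u>1\}$.
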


We point out that the space~$\H$ appearing in Theorem~\ref{THM:LINFTY}
is the energy space associated with the problem in~\eqref{problem}
and will be defined here below in formula~\eqref{spazioenerg}.

We remark that the assumption in~\eqref{ass9567daskjytrTRYJ} is satisfied
if the operator contains the Dirac's delta at~$s_\sharp$, and in this case~$\mu(\{s_\sharp\})>0$ and the proof of Theorem~\ref{THM:LINFTY} would be more similar to the case of the (possibly fractional) Laplacian, see footnote~\ref{foriuermnv876543}.

The advantage of~\eqref{ass9567daskjytrTRYJ} is that it allows us to consider also the case in which~$\mu(\{s_\sharp\})=0$ (for example when~$\mu$ is the Lebesgue measure), provided that we can find some ``mass'' near~$s_\sharp$. In this general setting, the proof of Theorem~\ref{THM:LINFTY} is not a mere adaptation
of the classical iterative argument (see e.g.~\cite[Proposition~2.2]{Barrios} or~\cite[Proposition~5.1.1]{DMV} for the proof of boundedness  of solutions in the fractional case), but it requires a more careful use of the Sobolev embedding and a delicate treatment of the integrals involved in the estimates.  

En passant, we also mention that we provide a detailed proof of a Kato-type inequality (in the weak sense)
in the general setting treated in this paper, see Proposition~\ref{lem:kato} below. 

Finally, we provide a general Pohozaev identity for solutions of~\eqref{problem}, provided they possess enough ``regularity''.

 \begin{theorem}\label{pohozaev theorem}
 Assume that~$\mu_-=0$. 
Let~$u \in \H $ be such that, for all~$R>0$,
\begin{equation*}
\iiint_{[0,1]\times B_R\times\R^N} C_{N,s}\frac{|2u(x)-u(x+y)-u(x-y)|}{|y|^{N+2s}}\,d\mu(s)\,dx\,dy<+\infty.
\end{equation*}
Suppose also that~$u\in W^{1,p}_{loc}(\R^N)$ for some~$p\in(1,+\infty)$ and that~$
\mathcal{L}_{\mu}u\in L^q_{loc}(\R^N)$,
being~$q$ the conjugate exponent of~$p$, namely~$\frac1p+\frac1q=1$.

 Suppose that  
 \begin{equation*}    \mathcal{L}_\mu u + \beta u = (I_\alpha \ast F(u))f(u) \,\,\, \textit{a.e. in}\,\,\, \R^N.
\end{equation*}

Then $u$ satisfies the Pohozaev identity \begin{eqnarray*}&&
\int_{[0,1]}\left( \frac{(N-2s)C_{N,s}}{2}\iint_{\R^{2N}}\frac{(u(x)-u(y))^2}{|x-y|^{N+2s}}\,dx\,dy\right)\, d\mu(s)  
+\frac{\beta N}{2}\int_{\R^N}{u}^2(x)\,dx\\&&\qquad\qquad=\frac{N+\alpha}{2}\int_{\R^N} (I_\alpha\ast F( u))F( u)\,dx.
\end{eqnarray*}
\end{theorem}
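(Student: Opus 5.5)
The plan is to test the equation against the dilation generator $x\cdot\nabla u$, localized by a cutoff, and then let the cutoff disappear. Fix $\varphi\in C_c^\infty(\R^N)$ with $\varphi=1$ on $B_1$ and $\operatorname{supp}\varphi\subset B_2$. Set $\varphi_R(x)=\varphi(x/R)$ and use the test function $v_R:=\varphi_R\,(x\cdot\nabla u)$. Since $u\in W^{1,p}_{loc}$ and $\mathcal{L}_\mu u\in L^q_{loc}$, the product $\mathcal{L}_\mu u\, v_R$ is integrable. The same holds for the other terms: $u\in L^2$, and the right-hand side equals $\mathcal{L}_\mu u+\beta u$ a.e., so it lies in $L^q_{loc}+L^2_{loc}$. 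We therefore multiply the a.e. identity by $v_R$ and integrate over $\R^N$.

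\textbf{Local terms.} For $\beta u$ we integrate by parts:
\[
\int u\,\varphi_R\, x\cdot\nabla u=-\tfrac12\int u^2\,(N\varphi_R+x\cdot\nabla\varphi_R),
\]
and the second term vanishes as $R\to\infty$ because $|x\cdot\nabla\varphi_R|\le C$ is supported in $B_{2R}\setminus B_R$. For the Choquard term we write $f(u)\,x\cdot\nabla u=x\cdot\nabla F(u)$, which holds for $W^{1,p}_{loc}$ functions by the chain rule with $F\in C^1$. The symmetry of the kernel then gives the standard identity
\[
\int (I_\alpha* F(u))\,x\cdot\nabla F(u)=-\tfrac{N+\alpha}{2}\int (I_\alpha*F(u))F(u),
\]
up to cutoff errors. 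This is the computation of Moroz--Van Schaftingen, done with $\varphi_R$ inserted. The errors vanish by dominated convergence, since $(I_\alpha*F(u))F(u)\in L^1$ by Hardy--Littlewood--Sobolev and $(F_2)$, with $u\in L^2\cap L^{2^*_{s_\sharp}}$ following from the embedding of $\H$.

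\textbf{Nonlocal term.} Here we exchange the $\mu$-integral with the $x$-integral by Fubini. This is exactly where the summability hypothesis on $\iiint_{[0,1]\times B_R\times\R^N}$ is used: it makes $C_{N,s}|2u(x)-u(x+y)-u(x-y)|/|y|^{N+2s}$ integrable against $d\mu\,dx\,dy$, once multiplied by the locally bounded-in-$L^p$ factor. The double-integral control needs care; if necessary one first mollifies $u$ and passes to the limit. For each fixed $s$ it then remains to show
\[
\lim_{R\to\infty}\int (-\Delta)^s u\;\varphi_R\, x\cdot\nabla u\,dx=\frac{N-2s}{2}\cdot\frac{C_{N,s}}{2}\iint\frac{(u(x)-u(y))^2}{|x-y|^{N+2s}},
\]
where the normalization matches $[u]_s^2$ as in the statement. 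I would follow the Ros-Oton--Serra route via dilations, using $x\cdot\nabla u=\frac{d}{d\lambda}u(\lambda x)|_{\lambda=1}$. Write $\int (-\Delta)^s u\, u_\lambda$ through the bilinear Gagliardo form $\mathcal{E}_s(u,u_\lambda)$. The scaling $\mathcal{E}_s(u_\lambda,u_\lambda)=\lambda^{2s-N}\mathcal{E}_s(u,u)$, together with symmetry in $\lambda\leftrightarrow1/\lambda$, gives derivative $\frac{2s-N}{2}\mathcal{E}_s(u,u)\cdot(-1)$ after the sign bookkeeping.

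\textbf{Main obstacle.} The hardest step is justifying this differentiation in $\lambda$, or equivalently passing $R\to\infty$ in the localized nonlocal term. The commutator between $\varphi_R$ and $(-\Delta)^s$ must vanish uniformly enough in $s$ to integrate against $\mu$. I would bound it by $C\min(1,|y|/R)$-type estimates on $\varphi_R(x)-\varphi_R(x+y)$ combined with $[u]_s<\infty$. Then dominated convergence in $s$ applies, because $\int[u]_s^2\,d\mu<\infty$ for $u\in\H$ when $\mu^-=0$. Summing the three contributions and rearranging signs yields the stated identity.
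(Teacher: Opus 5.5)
Your outer skeleton matches the paper's. Testing with $\varphi_R\,x\cdot\nabla u$ is testing with $\nabla u\cdot X_R$ for $X_R=\varphi_R x$, and the $\beta u$ and Choquard terms are handled in essentially the same way. The gap is in the nonlocal term, which carries all the difficulty, and your sketch of it does not work under the stated hypotheses.

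\textbf{The Fubini step is not justified.} The summability hypothesis gives $C_{N,s}|2u(x)-u(x+y)-u(x-y)|\,|y|^{-N-2s}\in L^1(d\mu\,dx\,dy)$ on $[0,1]\times B_R\times\R^N$. Multiplying by $\varphi_R\,x\cdot\nabla u$, which is only in $L^p$, does not preserve integrability. Moreover, the hypotheses concern $\mathcal{L}_\mu u$ only; nothing gives $(-\Delta)^s u\in L^q_{loc}$ for an individual $s$. So the reduction to ``for each fixed $s$'' is unfounded.

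\textbf{There is no formula at finite $R$, even for fixed $s$.}
\begin{itemize}
\item The function $\varphi_R\,x\cdot\nabla u$ need not lie in $H^s$, so $\int(-\Delta)^s u\,\varphi_R\,x\cdot\nabla u$ cannot be written as a Gagliardo form.
\item Your commutator term $\iint (u(x)-u(y))(\varphi_R(x)-\varphi_R(y))\,y\cdot\nabla u(y)\,|x-y|^{-N-2s}$ involves $|y|\,|\nabla u(y)|$ at far-away points. You have no control of that, since $u\in W^{1,p}_{loc}$ only.
\item The Ros-Oton--Serra dilation argument is global. It needs $\int_{\R^N}(-\Delta)^s u\,(x\cdot\nabla u)$ to converge and $\lambda\mapsto\langle(-\Delta)^s u,u(\lambda\,\cdot)\rangle$ to be differentiable at $\lambda=1$. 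Neither follows from the hypotheses.
\end{itemize}

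\textbf{The missing idea is a derivative-free localized identity.} For $X\in C^1_0(\R^N,\R^N)$ the paper proves
\[
-\int_{\R^N}\mathcal{L}_\mu u\,\nabla u\cdot X\,dx=\int_{(0,1)}C_{N,s}\iint_{\R^{2N}}\frac{|u(x)-u(y)|^2}{|x-y|^{N+2s}}\,\mathcal{K}^s_X(x,y)\,dx\,dy\,d\mu(s)+\bigl(\text{terms from } \mu(\{0\}),\ \mu(\{1\})\bigr).
\]
The proof has two steps.
\begin{enumerate}
\item For $u\in C^\infty_0$, apply the divergence theorem to $|u(x)-u(y)|^2X(x)|x-y|^{-N-2s}$ on $\{|x-y|>\varepsilon\}$.
\item Extend by density with $u_\varepsilon=(u*\varphi_\varepsilon)\tau_\varepsilon$. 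The left side converges because $\nabla u_\varepsilon\to\nabla u$ in $L^p_{loc}$ and $\mathcal{L}_\mu u_\varepsilon\to\mathcal{L}_\mu u$ in $L^q_{loc}$. This is where the summability hypothesis actually enters: it commutes $\mathcal{L}_\mu$ with the bounded mollifier. The right side converges because $\|u_\varepsilon-u\|_\mu\to0$ and $|\mathcal{K}^s_X|\le C\|DX\|_\infty$ uniformly in $s$.
\end{enumerate}
Then take $X_n=\varphi_n x$. The kernels $\mathcal{K}^s_{X_n}$ are uniformly bounded and converge pointwise to $\frac{N-2s}{2}$. So $n\to\infty$ is plain dominated convergence with majorant $C\,C_{N,s}|u(x)-u(y)|^2|x-y|^{-N-2s}\in L^1(d\mu\,dx\,dy)$, and no commutator estimate is needed.

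\textbf{Check your bookkeeping.} With the normalization \eqref{fractional laplacian}, $\int(-\Delta)^s u\,(x\cdot\nabla u)\,dx=-\frac{N-2s}{2}\,C_{N,s}\iint\frac{|u(x)-u(y)|^2}{|x-y|^{N+2s}}\,dx\,dy$; test it at $s=1$. Your displayed limit has the wrong sign and a spurious factor $\frac12$. With your sign, the three contributions do not combine into the stated identity.
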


We point out that the regularity assumptions on the solution~$u$ in Theorem~\ref{pohozaev theorem} are instrumental to make sense
of the quantities involved in the Pohozaev identity and in specific cases can be guaranteed by a suitable fractional elliptic regularity theory
depending on the specific problem considered, see e.g.~\cite{BDVV22, SVWZ, SVWZ2}.

\medskip

The rest of the paper is organized as follows. In Section \ref{Preliminaries}, we introduce the functional framework, defining the Hilbert space associated with the superposition operator $\mathcal{L}_\mu$ and recalling the fundamental properties of the Riesz potential and the Hardy-Littlewood-Sobolev inequality. 

Section \ref{Variational Setting and existence result} deals with the variational framework of \eqref{problem}, focusing on the Mountain Pass geometry and the existence of ground state solutions via radial compactness. Namely, here we prove the existence statement in Theorem~\ref{MP solution}.
Moreover, we establish the equivalence between the Mountain Pass level and the least energy level attained on the Pohozaev manifold, thus proving Theorem~\ref{p=l}.

In Section \ref{Regularity}, we address the global $L^\infty$ bound of Theorem~\ref{THM:LINFTY}.
Finally, Section \ref{sec_Pohozaev} is concerned with the derivation of the general Pohozaev identity for the operator $\mathcal{L}_\mu$, as stated in Theorem~\ref{pohozaev theorem}.

\section{Preliminaries}\label{Preliminaries}

Let $N\geq 3$  and $s\in (0,1)$. We consider the fractional Sobolev space
\begin{equation*}
    H^{s}(\R^N)=\{ u \in L^2(\R^N)\, | \, (-\Delta)^{\frac{s}{2}} u \in L^2(\R^N) \}.
\end{equation*}

In the rest of the paper, we use the short notation~$\|\cdot\|_p:=\|\cdot\|_{L^p(\R^N)}$.

We point out that the following relation with the Gagliardo seminorm holds (see e.g.~\cite[Proposition~3.6]{DPV}):
 \begin{equation}
    \label{seminorm_s}
    \|(-\Delta)^{\frac{s}{2}}u\|_{2}=[u]_{s}:=\left(C_{N,s} \iint_{\R^{2N}} \frac{|u(x)-u(y)|^2}{|x-y|^{N+2s}}\, dx\,dy\right)^{\frac{1}{2}}.
\end{equation} 

Also, we recall the continuous embedding (see~\cite[Theorem~6.5]{DPV}) 
\begin{equation}\label{embedding cont Hs}
    H^{s}(\R^N) \hookrightarrow L^p(\R^N) \qquad{\mbox{for every }}p\in [2, 2^*_{s}],
\end{equation}
where
$$2^*_{s}:=\frac{2N}{N-2s}$$ is the fractional critical Sobolev exponent, and the compact embedding (see~\cite{Lions}) \begin{equation}\label{compact embedding Hs}
    H^{s}_r(\R^N)\hookrightarrow \hookrightarrow L^p(\R^N)\qquad
    {\mbox{for every }}p\in (2,2_{s}^*),
\end{equation}  where~$H^{s}_r(\R^N)$ denotes
the space of radially symmetric functions in~$H^{s}(\R^N)$.

In addition, we have that
 \begin{equation}\label{embendind homogeneous space}
   \|u\|_{2_s^*}\leq C \|(-\Delta)^{\frac{s}{2}} u\|_{2},
\end{equation} for some $C>0$, depending on~$N$ and~$s$
(see~\cite[Theorem 6.5]{DPV}).

Now we introduce the functional space 
needed to deal with our setting as the closure of the space of smooth and compactly supported functions
in~$\R^N$ with respect to the norm
\begin{equation*}
    \|u\|_{\mu}:=\left(\int_{[0,1]} \|(-\Delta)^{\frac{s}{2}}u\|_{2}^2\, d\mu^+(s)+ \|u\|^2_{2}\right)^{\frac12}.
\end{equation*}
We denote this space by~$\H$ and we remark that it can also be written as
 \begin{equation}\label{spazioenerg}
    \H=\left\{u \in L^2(\R^N)\,\, \Big|\,\, \int_{[0,1]} (-\Delta)^{\frac{s}{2}}u\, d\mu^+(s) \in L^2(\R^N)\right\}
\end{equation}
(see, for instance~\cite[Theorem~2.1]{DPLSV}).

The space~$\H$ is continuously embedded in $H^{s_\sharp}(\R^N)$, where $s_\sharp$ is as in~\eqref{positivity measure 4}, according to the following statement (see also~\cite[Proposition~2.4]{DPSV}
for a similar result in the setting of functions in the energy space satisfying a zero Dirichlet condition outside
a given bounded domain~$\Omega$):

\begin{proposition}\label{Prop H mu in H s}
Let~$s_\sharp$ be as in~\eqref{positivity measure 4}.

Then, for any $u \in \H$,    \begin{equation*}
   [u]^2_{s_\sharp}\leq \|u\|_2^2+ \frac1{\mu^+([s_\sharp,1])}
        \int_{[0,1]} [u]^2_s\, d\mu^+(s).
    \end{equation*}
\end{proposition}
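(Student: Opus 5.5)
The plan is to work on the Fourier side, where all Gagliardo seminorms become weighted $L^2$ norms of $\widehat u$. By \eqref{seminorm_s} and the normalization of $C_{N,s}$, for every $s\in[0,1]$ we have
\begin{equation*}
[u]_s^2=\|(-\Delta)^{\frac s2}u\|_2^2=\int_{\R^N}(2\pi|\xi|)^{2s}|\widehat u(\xi)|^2\,d\xi .
\end{equation*}
By density it suffices to prove the inequality for $u\in C^\infty_c(\R^N)$, and then pass to the limit in $\|\cdot\|_\mu$ using Fatou's lemma on the left-hand side. The key pointwise fact is the elementary inequality: for $t:=2\pi|\xi|\ge0$ and $s\in[s_\sharp,1]$,
\begin{equation*}
t^{2s_\sharp}\le 1+t^{2s}.
\end{equation*}
Indeed, if $t\le1$ then $t^{2s_\sharp}\le1$, while if $t>1$ then $t^{2s_\sharp}\le t^{2s}$ because $s\ge s_\sharp$.

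Next, integrate this inequality in $s$ over $[s_\sharp,1]$ with respect to $\mu^+$ and divide by $\mu^+([s_\sharp,1])$, which is positive by \eqref{positivity measure 4}. This gives
\begin{equation*}
t^{2s_\sharp}\le 1+\frac{1}{\mu^+([s_\sharp,1])}\int_{[s_\sharp,1]}t^{2s}\,d\mu^+(s)\le 1+\frac{1}{\mu^+([s_\sharp,1])}\int_{[0,1]}t^{2s}\,d\mu^+(s),
\end{equation*}
where the second inequality holds because the integrand is nonnegative. We then multiply by $|\widehat u(\xi)|^2$, integrate in $\xi$, and use Tonelli's theorem (everything is nonnegative and measurable in $(s,\xi)$) to exchange the $s$- and $\xi$-integrals. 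Plancherel's identity $\|\widehat u\|_2=\|u\|_2$ then yields exactly the stated bound.

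The only points needing care are the measurability and Tonelli step and the endpoints $s=1$ (and $s=0$, should $\mu^+$ charge it). At $s=1$ the identity $[u]_1^2=\|\nabla u\|_2^2$, with the Fourier weight $(2\pi|\xi|)^2$, must be read as the conventional meaning of $[u]_1$, consistent with the limits recalled in the introduction; I take this convention for granted. I do not expect any real obstacle beyond this bookkeeping.
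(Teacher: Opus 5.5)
Your proof is correct and is essentially the paper's argument. The paper first proves $[u]_{s_1}^2\le\|u\|_2^2+[u]_{s_2}^2$ by the same Fourier splitting at $2\pi|\xi|=1$ (Lemma~\ref{fractional immersion}) and then integrates over $s\in[s_\sharp,1]$ against $\mu^+$, whereas you integrate the pointwise inequality in $s$ first and then in $\xi$ via Tonelli, which is the same computation with the order of integration swapped.
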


The proof of Proposition~\ref{Prop H mu in H s} is based on a simple observation, that we provide here in details since it will be used multiple times in the rest of this paper:

\begin{lemma}\label{fractional immersion}
Let $s_1$, $s_2 \in [0, 1]$ be s.t. $0\leq s_1\leq s_2\leq 1$ and $u \in H^{s_2}(\R^N)$.
Then,
\begin{equation*}
     [u]^2_{s_1}\leq \|u\|_2^2 + [u]^2_{s_2}.
\end{equation*}
\end{lemma}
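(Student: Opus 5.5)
We argue on the Fourier side, using the normalization fixed in the introduction: the Fourier transform of $(-\Delta)^s u$ is $(2\pi|\xi|)^{2s}\widehat u(\xi)$. Combined with \eqref{seminorm_s} and Plancherel, this gives, for every $s\in[0,1]$ and $u\in H^{s}(\R^N)$,
\begin{equation*}
[u]_s^2=\|(-\Delta)^{\frac s2}u\|_2^2=\int_{\R^N}(2\pi|\xi|)^{2s}|\widehat u(\xi)|^2\,d\xi .
\end{equation*}
For $s=0$ this reads $[u]_0^2=\|u\|_2^2$, consistent with $(-\Delta)^0=\mathrm{Id}$; for $s=1$ it is $\|\nabla u\|_2^2$. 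So the claim reduces to a pointwise inequality between Fourier multipliers.

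We split frequency space at the scale where $2\pi|\xi|=1$. Put $t:=2\pi|\xi|\ge 0$ and $0\le s_1\le s_2\le 1$.
\begin{itemize}
\item If $t\le 1$, then $t^{2s_1}\le 1$.
\item If $t\ge 1$, then $t^{2s_1}\le t^{2s_2}$, since $s_1\le s_2$.
\end{itemize}
In both cases
\begin{equation*}
t^{2s_1}\le 1+t^{2s_2}\qquad\mbox{for all } t\ge0 .
\end{equation*}
At $t=0$ with $s_1=0$ we use the convention $0^0=1$, and the inequality still holds.

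Multiplying by $|\widehat u(\xi)|^2$ and integrating over $\R^N$ gives
\begin{equation*}
[u]_{s_1}^2\le \int_{\R^N}|\widehat u|^2\,d\xi+\int_{\R^N}(2\pi|\xi|)^{2s_2}|\widehat u|^2\,d\xi=\|u\|_2^2+[u]_{s_2}^2,
\end{equation*}
using Plancherel once more. The right-hand side is finite because $u\in H^{s_2}(\R^N)$, so in particular $u\in H^{s_1}(\R^N)$.

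There is no real obstacle here. The only points needing care are the endpoint cases $s\in\{0,1\}$, where the Gagliardo form \eqref{seminorm_s} degenerates and $[u]_s$ must be read as $\|(-\Delta)^{s/2}u\|_2$, and the precise Fourier normalization, which is what makes the constant in front of $\|u\|_2^2$ exactly $1$. Once this lemma is available, Proposition~\ref{Prop H mu in H s} follows directly: apply the lemma with $s_1=s_\sharp$ and $s_2=s\in[s_\sharp,1]$, integrate against $d\mu^+(s)$ over $[s_\sharp,1]$, and divide by $\mu^+([s_\sharp,1])$.
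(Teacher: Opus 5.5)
Your proof is correct and is essentially the paper's argument: the paper also writes $[u]_{s_1}^2=\int_{\R^N}|2\pi\xi|^{2s_1}|\hat u(\xi)|^2\,d\xi$, splits at the ball $B_{1/(2\pi)}$, bounds the multiplier by $1$ inside and by $|2\pi\xi|^{2s_2}$ outside, and integrates. The only cosmetic difference is that the paper first reduces to $u\in C^\infty_0(\R^N)$ by density, whereas you apply Plancherel directly to $u\in H^{s_2}(\R^N)$. That route is equally valid, and your remark that $[u]_s$ must be read as $\|(-\Delta)^{s/2}u\|_2$ at the endpoints $s\in\{0,1\}$ is a useful clarification.
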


\begin{proof}
By density, it is enough to perform the proof for~$u \in C^\infty_0(\R^N)$. 
Thanks to~\cite[Proposition~3.4]{DPV}), we can write
\begin{equation*}\begin{split}
     [u]^2_{s_1}=&\int_{\R^N} |2\pi \xi|^{2s_1} |\hat u(\xi)|^2\, d\xi\\   
     =&\int_{B_{\frac{1}{2\pi}} } |2\pi \xi|^{2s_1} |\hat u(\xi)|^2d\xi + \int_{\R^N\smallsetminus B_{\frac{1}{2\pi}} } |2\pi \xi|^{2s_1} |\hat u(\xi)|^2\, d\xi\\
     \leq &\int_{\R^N} |\hat u(\xi)|^2\, d\xi+ \int_{\R^N} |2\pi \xi|^{2s_2} |\hat u(\xi)|^2\, d\xi\\ 
     =&\|u\|_2^2 + [u]^2_{s_2},
\end{split}\end{equation*} as desired.
\end{proof}

\begin{proof}[Proof of Proposition~\ref{Prop H mu in H s}]
Using Lemma~\ref{fractional immersion} with~$s_1:=s_\sharp$ ad $s_2:=s \in [s_\sharp, 1]$, we find that
$$  [u]^2_{s_\sharp}\le \|u\|_2^2 + [u]^2_{s}.$$
Thus, integrating in~$[s_\sharp, 1]$, we obtain that
\begin{eqnarray*}
\mu^+([s_\sharp, 1])[u]^2_{s_\sharp} &\leq&  \mu^+([s_\sharp,1])\|u\|_2^2+\int_{[s_\sharp,1]}[u]^2_s\, d\mu^+(s)\\ &\leq& \mu^+([s_\sharp,1])\|u\|_2^2+\int_{[0,1]}[u]^2_s\, d\mu^+(s),
\end{eqnarray*} 
from which the desired inequality follows.
\end{proof}

By Proposition~\ref{Prop H mu in H s} and the embeddings in~\eqref{embedding cont Hs}
and~\eqref{compact embedding Hs}, we deduce that
\begin{equation}\label{aggcompatto00}
\H \hookrightarrow L^p(\R^N) \qquad {\mbox{for every }}p \in [2,2^*_{s_\sharp}]\end{equation} and 
\begin{equation}\label{aggcompatto}
H^\mu_r(\R^N) \hookrightarrow\hookrightarrow L^p(\R^N)\qquad{\mbox{for every }}p \in (2,2^*_{s_\sharp}).
\end{equation}

Finally, we recall the following standard estimates for the Riesz pontential (see~\cite[Theorem~4.3]{LiebLoss}).

\begin{proposition}[Hardy-Littlewood-Sobolev inequality] \label{prop:HLS}
Let $\alpha \in(0,N)$, and let $r$, $q \in (1, +\infty)$ be such that $\frac{1}{r}-\frac{1}{q}=\frac{\alpha}{N}$. Then, the map
\begin{equation*}
    f \in L^r(\R^N) \mapsto I_\alpha * f \in L^q(\R^N)
\end{equation*} is continuous. In particular, if $r$, $t \in (1, +\infty)$ verify $\frac{1}{r}+\frac{1}{t}=\frac{N+\alpha}{N}$, then there exists a constant $C=C(N, \alpha, r, t)>0$ such that \begin{equation*}
    \left|\int_{\R^N} (I_\alpha * g)h\, dx\right|\leq C\|g\|_{r}\|h\|_{t}
\end{equation*} for all $g \in L^{r}(\R^N)$ and $h \in L^t(\R^N)$.
\end{proposition}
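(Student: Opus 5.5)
This is the classical Hardy--Littlewood--Sobolev inequality, quoted from \cite[Theorem~4.3]{LiebLoss}, so the plan is to indicate how the two parts follow from standard tools rather than to redo the sharp-constant analysis.

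\emph{Step 1: the mapping property.} I would prove that $f\mapsto I_\alpha*f$ is bounded from $L^r$ to $L^q$, with $\frac1r-\frac1q=\frac\alpha N$, via Hedberg's pointwise argument. Fix $x$ and a radius $\rho>0$, and split
\[
|I_\alpha*f(x)|\le C_\alpha\int_{|y|<\rho}\frac{|f(x-y)|}{|y|^{N-\alpha}}\,dy+C_\alpha\int_{|y|\ge\rho}\frac{|f(x-y)|}{|y|^{N-\alpha}}\,dy .
\]
\begin{itemize}
\item For the near part, decompose into dyadic annuli. This bounds it by $C\rho^{\alpha}Mf(x)$, where $M$ is the Hardy--Littlewood maximal function.
\item For the far part, apply H\"older's inequality with exponent $r$. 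Since $r<N/\alpha$, the kernel is in $L^{r'}$ away from the origin, and the far part is at most $C\rho^{\alpha-N/r}\|f\|_r$.
\end{itemize}
Optimizing in $\rho$, I choose $\rho=(\|f\|_r/Mf(x))^{r/N}$. This gives
\[
|I_\alpha*f(x)|\le C\,(Mf(x))^{r/q}\,\|f\|_r^{1-r/q}.
\]
Taking the $L^q$ norm and using the maximal inequality $\|Mf\|_r\le C\|f\|_r$ (valid since $r>1$) yields $\|I_\alpha*f\|_q\le C\|f\|_r$.

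\emph{Step 2: the bilinear form.} The condition $\frac1r+\frac1t=\frac{N+\alpha}N$ rewrites as $\frac1r-\frac{1}{t'}=\frac\alpha N$, with $t'$ the conjugate exponent of $t$. Since $t>1$, $r>1$ and $\frac1r>\frac\alpha N$, the exponent $t'$ lies in $(1,+\infty)$. H\"older's inequality followed by Step~1 then gives
\[
\left|\int(I_\alpha*g)h\right|\le\|I_\alpha*g\|_{t'}\|h\|_t\le C\|g\|_r\|h\|_t .
\]

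\emph{Main obstacle.} The only genuinely nontrivial step is the weak-type reduction, that is, the boundedness of the maximal function. I would take it as known from classical harmonic analysis, or alternatively argue via Marcinkiewicz interpolation from weak-type estimates. The restriction $r,q\in(1,\infty)$ is exactly what allows this step; the endpoint cases fail.
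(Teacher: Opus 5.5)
Your argument is correct, but it runs in the opposite direction from the source the paper relies on. The paper gives no proof of its own and simply cites \cite[Theorem~4.3]{LiebLoss}. There the inequality is proved in its bilinear form $\iint f(x)|x-y|^{-(N-\alpha)}h(y)\,dx\,dy$. By the layer-cake representation, $f$, $h$ and the kernel are written as superpositions of characteristic functions. The resulting set integrals are bounded by an elementary estimate in terms of the measures of the sets, and one then integrates over the levels. The $L^r\to L^q$ mapping property follows by duality.

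You instead prove the operator bound first, via Hedberg's pointwise inequality $|I_\alpha*f|\le C(Mf)^{r/q}\|f\|_r^{1-r/q}$ and the Hardy--Littlewood maximal theorem. You then deduce the bilinear form by H\"older's inequality with the conjugate exponent $t'$. Your exponent bookkeeping, the condition $r<N/\alpha$ for the tail, and the optimization in $\rho$ all check out. The degenerate points where $Mf(x)\in\{0,\infty\}$ are harmless.

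Your route is shorter and gives a pointwise bound that is useful in its own right. However, it outsources the real work to the maximal theorem, which itself needs a covering lemma and Marcinkiewicz interpolation. The layer-cake proof is more self-contained, and it is the framework in which Lieb--Loss also treat the sharp constant. The paper only needs the existence of some constant $C(N,\alpha,r,t)$, so either route suffices.
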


Proposition~\ref{prop:HLS} is useful because it will guarantee that the functional associated with our problem
in~\eqref{problem} are ``well-defined'' in the functional setting introduced here.
To check this fact, we provide the following observation:

\begin{lemma}\label{regularity of f}
 Let $u \in H^{s_\sharp}(\mathbb{R}^N)$ and assume $(F_2)$. Then \begin{align*}
    f(u) \in L^{\frac{2N}{\alpha}}(\mathbb{R}^N) + L^{\frac{2N}{\alpha+2s_\sharp}}(\mathbb{R}^N),\,\,\,\,\,  F(u)\in L^{\frac{2N}{N+\alpha}}(\mathbb{R}^N),
\end{align*}
\begin{align*}
    I_{\alpha} * F(u) \in L^2(\R^N)+ L^{\frac{2N}{N+2s_\sharp}}(\R^N),\,\,\,\,\,\,\, (I_\alpha * F(u))f(u) \in L^2(\mathbb{R}^N) + L^{\frac{2N}{N+2s_\sharp}}(\mathbb{R}^N).
\end{align*}
 \end{lemma}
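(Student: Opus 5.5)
The plan is to derive pointwise bounds from $(F_2)$, split $f(u)$ and $F(u)$ according to whether $|u|\le 1$ or $|u|>1$, and then apply the Sobolev embedding \eqref{embedding cont Hs} with $s=s_\sharp$ together with Hardy--Littlewood--Sobolev (Proposition~\ref{prop:HLS}).

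First, pointwise bounds. From $(F_2)$ we get, for $t\neq0$,
$$|f(t)|\le C\bigl(|t|^{\frac{\alpha}{N}}+|t|^{\frac{\alpha+2s_\sharp}{N-2s_\sharp}}\bigr).$$
Integrating this (using $(F_1)$ and $F(0)=0$) gives
$$|F(t)|\le C\bigl(|t|^{\frac{N+\alpha}{N}}+|t|^{\frac{N+\alpha}{N-2s_\sharp}}\bigr).$$

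Next, the integrability of $f(u)$. Write $f(u)=f(u)\chi_{\{|u|\le1\}}+f(u)\chi_{\{|u|>1\}}$.
\begin{itemize}
\item On $\{|u|\le1\}$ we have $|f(u)|\le 2C|u|^{\alpha/N}$. Since $u\in L^2$, the function $|u|^{\alpha/N}$ lies in $L^{2N/\alpha}$.
\item On $\{|u|>1\}$ we have $|f(u)|\le 2C|u|^{\frac{\alpha+2s_\sharp}{N-2s_\sharp}}$. Since $u\in L^{2^*_{s_\sharp}}$, this power lies in $L^{r}$ with $r\,\frac{\alpha+2s_\sharp}{N-2s_\sharp}=\frac{2N}{N-2s_\sharp}$, that is, $r=\frac{2N}{\alpha+2s_\sharp}$.
\end{itemize}

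For $F(u)$, the two terms $|u|^{\frac{N+\alpha}{N}}$ and $|u|^{\frac{N+\alpha}{N-2s_\sharp}}$ lie in $L^{\frac{2N}{N+\alpha}}$, because $u\in L^2$ and $u\in L^{2^*_{s_\sharp}}$ respectively. Hence $F(u)\in L^{\frac{2N}{N+\alpha}}$ with no splitting needed.

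For the convolution, apply Proposition~\ref{prop:HLS} with $r=\frac{2N}{N+\alpha}$. Then $\frac1q=\frac{N+\alpha}{2N}-\frac{\alpha}{N}=\frac{N-\alpha}{2N}$, so $I_\alpha\ast F(u)\in L^{\frac{2N}{N-\alpha}}$. This is in fact a single space, which is stronger than the stated sum. The claimed membership in $L^2+L^{\frac{2N}{N+2s_\sharp}}$ does not obviously follow from $L^{\frac{2N}{N-\alpha}}$. I suspect the intended statement is $L^{\frac{2N}{N-\alpha}}$, and I would prove that version.

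For the product, the Hölder exponents are
$$\frac{N-\alpha}{2N}+\frac{\alpha}{2N}=\frac12, \qquad \frac{N-\alpha}{2N}+\frac{\alpha+2s_\sharp}{2N}=\frac{N+2s_\sharp}{2N}.$$
Therefore $(I_\alpha\ast F(u))f(u)\in L^2+L^{\frac{2N}{N+2s_\sharp}}$. This last claim is the one actually needed, since that sum space is the dual of $L^2\cap L^{2^*_{s_\sharp}}\supset H^{s_\sharp}$.

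The main obstacle is the exponent bookkeeping, in particular the third claim as literally written. I would state and prove the sharper $L^{\frac{2N}{N-\alpha}}$ bound and then derive the product claim from it.
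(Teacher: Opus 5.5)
Your route is the paper's: growth bounds from $(F_2)$, the embedding $H^{s_\sharp}(\mathbb{R}^N)\hookrightarrow L^2(\mathbb{R}^N)\cap L^{2^*_{s_\sharp}}(\mathbb{R}^N)$, Hardy--Littlewood--Sobolev from $L^{\frac{2N}{N+\alpha}}$ into $L^{\frac{2N}{N-\alpha}}$, and H\"older. Splitting along $\{|u|\le 1\}$ is tidier bookkeeping than the paper's intersections of Lebesgue spaces, some of whose exponents (e.g. $2\frac{N-2s_\sharp}{N+\alpha}$) can even fall below $1$.

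Your suspicion about the third claim is justified: as literally stated, it is false in general. The paper only asserts the inclusion into $L^2+L^{\frac{2N}{N+2s_\sharp}}$ without justification. Its exponents for the product, such as $\frac{2N^2}{N^2-2s_\sharp\alpha}$ and $\frac{2N(N-2s_\sharp)}{N^2+2s_\sharp\alpha}$, are exactly what one gets by pairing $I_\alpha*F(u)\in L^{\frac{2N}{N-\alpha}}$ with the pieces of $f(u)$, just as you do.

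Do fix the phrase calling $L^{\frac{2N}{N-\alpha}}$ ``stronger than the stated sum''; it does not imply membership in that sum. If $g\in L^2+L^r$ with $r<2$, then $g\chi_{\{|g|\le 1\}}\in L^2$. This fails for $I_\alpha*F(u)$, for instance, when $\alpha\ge N/2$, $F(t)=|t|^q$ with $\frac{N+\alpha}{N}<q<\frac{N+\alpha}{N-2s_\sharp}$ (so $(F_1)$--$(F_4)$ hold), and $0\le u\in C^\infty_0(\mathbb{R}^N)$, $u\not\equiv 0$. In that case $I_\alpha*F(u)\to0$ at infinity, while $I_\alpha*F(u)(x)\ge c|x|^{-(N-\alpha)}$ for $|x|$ large, so $I_\alpha*F(u)\notin L^2+L^{\frac{2N}{N+2s_\sharp}}$.
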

 
 \begin{proof}
     Let $u \in H^{s_\sharp}(\mathbb{R}^N)\subset L^2(\mathbb{R}^N)\cap L^{2^*_{s_\sharp}}(\mathbb{R}^N)$.
     By $(F_2)$, we have
     \begin{align*}
    f(u) &\in \left(L^{\frac{2N}{\alpha}}(\mathbb{R}^N)\cap L^{\frac{N}{\alpha}\frac{2N}{N-2s_\sharp}}(\mathbb{R}^N) \right)+ \left(L^{2\frac{N-2s_\sharp}{\alpha+2s_\sharp}}(\mathbb{R}^N)\cap L^{\frac{2N}{\alpha+2s_\sharp}}(\mathbb{R}^N)\right)\\ &\subset L^{\frac{2N}{\alpha}}(\mathbb{R}^N) + L^{\frac{2N}{\alpha+2s_\sharp}}(\mathbb{R}^N), 
\end{align*} and
\begin{align*}
    F(u)&\in \left(L^{\frac{2N}{N+\alpha}}(\mathbb{R}^N)\cap L^{\frac{N}{N+\alpha}\frac{2N}{N-2s_\sharp}}(\mathbb{R}^N)\right)
    +\left( L^{2\frac{N-2s_\sharp}{N+\alpha}}(\mathbb{R}^N) \cap L^{\frac{2N}{N+\alpha}}(\mathbb{R}^N)\right)\\ & \subset L^{\frac{2N}{N+\alpha}}(\mathbb{R}^N).
\end{align*}Thus, by the Hardy-Littlewood-Sobolev inequality in Proposition~\ref{prop:HLS}, we obtain that
\begin{align*}
    I_{\alpha} * F(u) & \in \left(
    L^{\frac{2N}{N-\alpha}}(\mathbb{R}^N)\cap L^{\frac{2N^2}{N^2-(\alpha+2s_\sharp)N-2s_\sharp \alpha}}(\mathbb{R}^N)\right)
    +\left(L^{\frac{2N(N-2s)}{N^2-\alpha N +4 s_\sharp\alpha}}(\mathbb{R}^N)\cap L^{\frac{2N}{N-\alpha}}(\mathbb{R}^N)\right)\\ &\subset  L^2(\R^N)+ L^{\frac{2N}{N+2s_\sharp}}(\R^N).
\end{align*}Then, by the H\"older inequality,
\begin{align*}
    (I_\alpha * F(u))f(u) &\in \left(
    L^2(\mathbb{R}^N) \cap L^{\frac{2N^2}{N^2 - 2s_\sharp\alpha}}(\mathbb{R}^N)\right) 
    + \left(L^{\frac{2N(N-2s_\sharp)}{N^2 + 2s_\sharp\alpha}}(\mathbb{R}^N) \cap L^{\frac{2N}{N+2s_\sharp}}(\mathbb{R}^N)\right)\\ & \subset L^2(\mathbb{R}^N) + L^{\frac{2N}{N+2s_\sharp}}(\mathbb{R}^N).
\end{align*}
These considerations establish Lemma~\ref{regularity of f}.
\end{proof}
 
 \begin{remark}
{\rm{We note that in general $(I_\alpha * F(u))f(u)$ does not lie in $L^2(\R^N)$. On the other hand, if~$\varphi \in H^{s_\sharp}(\R^N)\subset L^2(\R^N) \cap L^{2^*_{s_\sharp}}(\R^N)$, the summability property of $(I_\alpha * F(u))f(u)$ 
determined by Lemma~\ref{regularity of f}, gives that~$(I_\alpha * F(u))f(u)\varphi\in L^1(\R^N)$.
}}\end{remark}
 
\section{Variational setting and proofs of Theorems \ref{MP solution}
and~\ref{p=l}}\label{Variational Setting and existence result}

\subsection{Variational setting}
In this section we search for solution of the problem~\eqref{problem} via variational techniques restricted to the space of radially symmetric functions $$\M=\{ u \in \H\, |\, u(x):=u(|x|)\, \textit{a.e.}\, x \in \R^N     \}.$$
For this, we will need to cast our problem into the appropriate variational setting.

The first statement will be used to deal with the terms in the energy functional related to the negative part of the measure, namely we would like to ``reabsorb'' this term in the one coming from the positive part in order to be able
to check that the Mountain Pass structure of the functional is fulfilled. An analogous statement is contained in~\cite[Proposition~2.3]{DPSV}).
 
\begin{proposition}\label{assorb measure}
  Let~$\bar{s}$ be as in~\eqref{positivity property 1}, \eqref{positivity property 2}, and~\eqref{positivity property 3}. 
  
Then, for any $u\in \H$,
$$        \int_{[0,\bar s]} [u]_s^2 \, d\mu^-(s) \leq  \gamma \left( 
        2 \mu^+([\bar s, 1]) \|u\|^2_2+ \int_{[\bar s,1]} [u]_s^2\, d\mu^+(s)\right).
   $$\end{proposition}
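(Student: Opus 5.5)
The plan is to use Lemma~\ref{fractional immersion} to bound every seminorm $[u]_s$ with $s\in[0,\bar s]$ by a seminorm of order $\bar s$, and then to compare the latter with the higher-order seminorms that carry positive mass. This is in the same spirit as the proof of Proposition~\ref{Prop H mu in H s}. By density, as in the proof of Lemma~\ref{fractional immersion}, it is enough to work with $u\in C^\infty_0(\R^N)$.

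\emph{Step 1.} For every $s\in[0,\bar s]$, Lemma~\ref{fractional immersion} with $s_1:=s$ and $s_2:=\bar s$ gives
$$[u]_s^2\le \|u\|_2^2+[u]_{\bar s}^2 .$$
Integrating with respect to $\mu^-$ on $[0,\bar s]$ and using \eqref{positivity property 3}, we obtain
$$\int_{[0,\bar s]}[u]_s^2\,d\mu^-(s)\le \mu^-([0,\bar s])\big(\|u\|_2^2+[u]_{\bar s}^2\big)\le \gamma\,\mu^+([\bar s,1])\big(\|u\|_2^2+[u]_{\bar s}^2\big).$$

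\emph{Step 2.} Now bound $[u]_{\bar s}^2$ from above. For each $\sigma\in[\bar s,1]$, Lemma~\ref{fractional immersion} with $s_1:=\bar s$ and $s_2:=\sigma$ yields
$$[u]_{\bar s}^2\le\|u\|_2^2+[u]_\sigma^2 .$$
Integrating in $\sigma$ with respect to $\mu^+$ over $[\bar s,1]$ gives
$$\mu^+([\bar s,1])\,[u]_{\bar s}^2\le \mu^+([\bar s,1])\,\|u\|_2^2+\int_{[\bar s,1]}[u]_\sigma^2\,d\mu^+(\sigma).$$
This is meaningful because $\mu^+([\bar s,1])>0$ by \eqref{positivity property 1}.

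\emph{Step 3.} Substituting the bound of Step 2 into the estimate of Step 1, we get
$$\int_{[0,\bar s]}[u]_s^2\,d\mu^-(s)\le \gamma\Big(2\mu^+([\bar s,1])\|u\|_2^2+\int_{[\bar s,1]}[u]_s^2\,d\mu^+(s)\Big),$$
which is exactly the claim. Condition \eqref{positivity property 2} is not needed in the inequality itself. It only ensures that the right-hand side really involves the positive part of the energy on $[\bar s,1]$.

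The main point of care is the measurability and finiteness of $s\mapsto[u]_s^2$ needed to integrate in $s$, including the endpoints $s=0$ and $s=1$, where the seminorm is read through the Fourier representation $\int|2\pi\xi|^{2s}|\hat u|^2\,d\xi$. For $u\in C_0^\infty(\R^N)$ this map is continuous in $s$ by dominated convergence, so all integrals are well defined. For general $u\in\H$, we pass to the limit along an approximating sequence using Fatou's lemma on the left-hand side and convergence in $\|\cdot\|_\mu$ on the right-hand side.
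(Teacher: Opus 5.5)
Your proposal is correct and follows the paper's proof essentially step for step. You apply Lemma~\ref{fractional immersion} on $[0,\bar s]$ against $\mu^-$ using \eqref{positivity property 3}, then apply it again on $[\bar s,1]$ against $\mu^+$ to control $[u]_{\bar s}^2$, and finally combine the two bounds to get the factor $2\mu^+([\bar s,1])$. Your remark that \eqref{positivity property 2} is not actually needed for the inequality is accurate, and your measurability and density remarks are harmless extra care.
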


\begin{proof}
We exploit Lemma~\ref{fractional immersion} with $s_1:=s\in [0,\bar s]$ and~$s_2:=\bar s$ and we obtain that~$
    [u]^2_{s} \leq \|u\|_2^2+ [u]^2_{\bar s}$.
     Thus, integrating over~$[0,\bar s]$ with respect to~$\mu^-$,
     \begin{equation}\label{bvcnkxiwqou8tr}
     \int_{[0,\bar s]} [u]^2_{s}\, d\mu^-(s)\leq \mu^-([0,\bar s])\big(\|u \|_2^2+ [u]^2_{\bar s}\big).
     \end{equation}

Moreover, using Lemma~\ref{fractional immersion}
with~$s_1:=\bar s$ and~$s_2:=s\in [\bar s,1]$ we obtain that~$ [u]^2_{\bar s}\leq \|u\|_2^2+ [u]^2_{ s}$, and so,
integrating over~$ [\bar s,1]$ with respect to~$\mu^+$,
\begin{equation}\label{bvcnkxiwqou8tr2}
\mu^+([\bar s,1]) [u]^2_{\bar s}\leq \mu^+([\bar s,1]) \|u\|_2^2
+\int_{[\bar s,1]} [u]^2_{ s}\,\mu^+(s).
\end{equation}

From~\eqref{bvcnkxiwqou8tr} and~\eqref{bvcnkxiwqou8tr2},
and recalling also~\eqref{positivity property 2} and~\eqref{positivity property 3}, we conclude that
     \begin{eqnarray*}
        & &\int_{[0,\bar s]} [u]^2_{s}\, d\mu^-(s)\leq  \mu^-([0,\bar s])\big(\|u \|_2^2+ [u]^2_{\bar s}\big)
      \leq\gamma  \mu^+([\bar s, 1])\big(\|u\|_2^2+[u]^2_{\bar s}\big)\\
       &&\qquad \leq\gamma\mu^+([\bar s, 1])\|u\|_2^2+ \gamma \mu^+([\bar s, 1]) \|u\|^2_2+\gamma \int_{[\bar s,1]} [u]_s^2\, d\mu^+(s)\\ &&\qquad\le 2 \gamma\mu^+([\bar s, 1]) \|u\|^2_2+\gamma\int_{[\bar s,1]} [u]_s^2\, d\mu^+(s),
     \end{eqnarray*} as desired.
\end{proof}

For our purposes, we will also need a variant of Proposition~\ref{assorb measure}, that takes care of the ``reabsorbing'' property for the Pohozaev functional (that will be defined here below in formula~\eqref{Pohozaev functional}).

\begin{proposition}
\label{assorb measureBIS}
  Let~$\bar{s}$ be as in~\eqref{positivity property 1}, \eqref{positivity property 2}, and~\eqref{positivity property 3}. 
  
Then, there exists a constant~$\bar{c}>0$, depending on~$N$ and~$\mu^+([\bar s,1])$, such that,
for any $u\in \H$,
$$        \int_{[0,\bar s]}\frac{N-2s}2 [u]_s^2 \, d\mu^-(s) \leq  \gamma \bar{c}\left( 
 \|u\|^2_2+ \int_{[\bar s, 1]} \frac{N-2s}2 [u]_s^2\, d\mu^+(s)\right).
   $$\end{proposition}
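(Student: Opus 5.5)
The plan is to reduce to Proposition~\ref{assorb measure} by comparing the weights $\frac{N-2s}{2}$ on the two sides with uniform constants. Since $N\ge3$ and $s\in[0,1]$, we have
\[
\frac{N-2}{2}\le \frac{N-2s}{2}\le \frac N2 \qquad\mbox{for all } s\in[0,1].
\]
On the left-hand side, where $s\in[0,\bar s]$, we use the upper bound. On the right-hand side, where $s\in[\bar s,1]$, we use the lower bound $\frac{N-2s}{2}\ge\frac{N-2}{2}\ge\frac12>0$. This lower bound holds precisely because $N\ge3$, and it is what makes the reabsorption possible.

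First we bound the left-hand side by
\[
\frac N2\int_{[0,\bar s]}[u]_s^2\,d\mu^-(s).
\]
Applying Proposition~\ref{assorb measure} then gives
\[
\int_{[0,\bar s]}\frac{N-2s}{2}[u]_s^2\,d\mu^-(s)\le \frac{N\gamma}{2}\Big(2\mu^+([\bar s,1])\|u\|_2^2+\int_{[\bar s,1]}[u]_s^2\,d\mu^+(s)\Big).
\]
Next we insert the lower bound on the weight in the last integral:
\[
\int_{[\bar s,1]}[u]_s^2\,d\mu^+(s)\le \frac{2}{N-2}\int_{[\bar s,1]}\frac{N-2s}{2}[u]_s^2\,d\mu^+(s).
\]
Combining the two estimates yields the claim with
\[
\bar c:=\max\Big\{N\mu^+([\bar s,1]),\ \frac{N}{N-2}\Big\},
\]
which depends only on $N$ and $\mu^+([\bar s,1])$, as stated.

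There is no real obstacle here; the only point requiring care is checking that $\frac{N-2s}{2}$ is bounded away from zero on $[\bar s,1]$, which holds since $N\ge 3$. If one prefers not to invoke Proposition~\ref{assorb measure} directly, one can instead repeat its proof, i.e.\ the two applications of Lemma~\ref{fractional immersion} through the intermediate exponent $\bar s$, with the weights inserted at the same points. This gives the same constant up to harmless factors.
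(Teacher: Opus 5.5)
Your proof is correct. It rests on the same ingredients as the paper's: the weight bounds $\frac{N-2}{2}\le\frac{N-2s}{2}\le\frac N2$ (with $N\ge3$) and the two applications of Lemma~\ref{fractional immersion} through the intermediate exponent $\bar s$. The paper reruns that argument with the weights inserted, which is the alternative you mention at the end. Your shortcut of citing Proposition~\ref{assorb measure} directly is slightly more economical and even gives a marginally smaller coefficient on the $L^2$ term: $N\mu^+([\bar s,1])$ instead of the paper's $\frac{N(N-1)}{N-2}\mu^+([\bar s,1])$.
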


\begin{proof}
We exploit Lemma~\ref{fractional immersion} with $s_1:=s\in [0,\bar s]$ and~$s_2:=\bar s$ and we obtain that~$
    [u]^2_{s} \leq \|u\|_2^2+ [u]^2_{\bar s}$.
     Thus, multiplying by~$(N-2s)/2$ and integrating over~$[0,\bar s]$ with respect to~$\mu^-$, we find that
     \begin{equation}\label{bvcnkxiwqou8tr3}\begin{split}
     \int_{[0,\bar s]}\frac{N-2s}2 [u]^2_{s}\, d\mu^-(s)&\leq
        \int_{[0,\bar s]}\frac{N-2s}2\, d\mu^-(s)  \big(\|u\|_2^2+ [u]^2_{\bar s}\big)\\
     &\leq \frac{N}2 \mu^-([0,\bar s])\big(\|u \|_2^2+ [u]^2_{\bar s}\big).
     \end{split}\end{equation}

Moreover, using Lemma~\ref{fractional immersion}
with~$s_1:=\bar s$ and~$s_2:=s\in [\bar s,1]$ we obtain that~$ [u]^2_{\bar s}\leq \|u\|_2^2+ [u]^2_{ s}$, and so,
multiplying by~$(N-2s)/2$ and
integrating over~$ [\bar s,1]$ with respect to~$\mu^+$,
\begin{equation}\label{bvcnkxiwqou8tr4}\begin{split}
\frac{N-2}2\mu^+([\bar s,1]) [u]^2_{\bar s}&\leq
\int_{[\bar s,1]}\frac{N-2s}2\, d\mu^+(s) [u]^2_{\bar s}\\
&\leq \int_{[\bar s,1]}\frac{N-2s}2 \big( \|u\|_2^2+ [u]^2_{ s}\big)\, d\mu^+(s)
\\&\leq \frac{N}2\mu^+([\bar s,1])\|u\|_2^2+ \int_{[\bar s,1]}\frac{N-2s}2[u]^2_{ s}\, d\mu^+(s)
.
\end{split}\end{equation}

From~\eqref{bvcnkxiwqou8tr3} and~\eqref{bvcnkxiwqou8tr4},
and recalling also~\eqref{positivity property 2} and~\eqref{positivity property 3}, we conclude that
     \begin{eqnarray*}
&&\int_{[0,\bar s]}\frac{N-2s}2 [u]^2_{s}\, d\mu^-(s)\leq \frac{N}2 \mu^-([0,\bar s])\big(\|u \|_2^2+ [u]^2_{\bar s}\big)\\&&\qquad
      \leq\frac{\gamma N}2  \mu^+([\bar s, 1])\big(\|u\|_2^2+[u]^2_{\bar s}\big)\\
&&\qquad\leq
\frac{\gamma N}2\mu^+([\bar s, 1])\|u\|_2^2+
\frac{\gamma N}{N-2}\left(\frac{N}2\mu^+([\bar s,1])\|u\|_2^2+ \int_{[\bar s,1]}\frac{N-2s}2[u]^2_{ s}\, d\mu^+(s)\right)
\\ &&\qquad= \gamma
\left(\frac{ N}2+\frac{  N^2}{2(N-2)}\right)\mu^+([\bar s,1])\|u\|_2^2
+
\frac{\gamma N}{N-2} \int_{[\bar s,1]}\frac{N-2s}2[u]^2_{ s}\, d\mu^+(s) ,
     \end{eqnarray*} as desired.
\end{proof}

 Now we introduce the definition of a weak solution of \eqref{problem}.
 
\begin{definition}\label{weak solution}
We say that $u \in H_r^{\mu}(\mathbb{R}^N)$ is a weak solution
of~\eqref{problem} if, for all $v \in H_r^{\mu}(\mathbb{R}^N)$,
    \begin{align*}
     \int_{[0,1]} &\Big( C_{N,s} \iint_{\mathbb{R}^{2N}}\frac{(u(x)-u(y))(v(x)-v(y))}{|x-y|^{N+2s}}dx\,dy\Big)d\mu^+(s)\\ &-\int_{[0, \bar s]} \Big(C_{N,s}\iint_{\mathbb{R}^{2N}}\frac{(u(x)-u(y))(v(x)-v(y))}{|x-y|^{N+2s}}dx\,dy\Big)d\mu^-(s)\\&= -\beta \int_{\mathbb{R}^N}u\, v\, dx +\int_{\mathbb{R}^N} (I_{\alpha} * F(u))f(u) v\, dx.
    \end{align*}
\end{definition}

The variational functional associated with~\eqref{problem} is
$\mathcal{J}_\beta: H_r^{\mu}(\mathbb{R}^N)\to \mathbb{R}$ defined as
\begin{equation}    \label{functional associated to the problem}
    \mathcal J_{\beta}(u):= \frac{1}{2}\int_{[0,1]} \|(-\Delta)^{\frac{s}{2}}u\|^2_2\, d\mu^+(s)-\frac{1}{2}\int_{[0,\bar s]} \|(-\Delta)^{\frac{s}{2}}u\|^2_2\, d\mu^-(s)  + \frac{\beta}{2}\|u\|^2_2- \frac{1}{2}\mathcal{D}(u),
\end{equation}
where \begin{equation}\label{chewiut9743767676u}
\mathcal{D}(u):=\int_{\R^N} (I_\alpha * F(u))F(u)\, dx.\end{equation}
We point out that
the functional $\mathcal J_\beta$ is of class $C^1$
and its critical points correspond to weak solutions of~\eqref{problem} according to Definition~\ref{weak solution}.

   
   \subsection{The Mountain Pass structure}

Aim of this subsection is to check that the functional~$\mathcal{J}_\beta$
in~\eqref{functional associated to the problem} has a Mountain Pass structure.

For this, we provide an estimate for the term~$\mathcal{D}(u)$ in~\eqref{chewiut9743767676u}.

\begin{lemma}\label{bjkewyr8392659843659jnhbgf}
For all~$u\in H_r^{\mu}(\mathbb{R}^N)$, we have that
$$ |\mathcal{D}(u)|\le C\left(\|u\|_\mu^{2\frac{N+\alpha}N}
+\|u\|_{\mu}^{2\frac{N+\alpha}{N-2s_\sharp}}\right)
,$$ for some~$\widetilde C>0$, depending on~$N$, $\alpha$, $s_\sharp$, and the constant~$C$ appearing in~$(F_2)$.
\end{lemma}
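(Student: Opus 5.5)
The bound on $\mathcal D(u)$ follows by chaining three earlier ingredients: the Hardy--Littlewood--Sobolev inequality (Proposition~\ref{prop:HLS}), the growth condition $(F_2)$, and the embedding of $\H$ into $H^{s_\sharp}(\R^N)$ given by Proposition~\ref{Prop H mu in H s}.

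\emph{Step 1 (HLS).} Apply Proposition~\ref{prop:HLS} with $g=h=F(u)$ and $r=t=\frac{2N}{N+\alpha}$, which satisfies $\frac1r+\frac1t=\frac{N+\alpha}{N}$. This gives
$$|\mathcal D(u)|\le C\,\|F(u)\|_{\frac{2N}{N+\alpha}}^2.$$
By Lemma~\ref{regularity of f}, $F(u)$ indeed lies in this space.

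\emph{Step 2 (growth of $F$).} Integrating $(F_2)$, written as $|f(\tau)|\le C(|\tau|^{\frac{\alpha}{N}}+|\tau|^{\frac{\alpha+2s_\sharp}{N-2s_\sharp}})$, from $0$ to $t$ yields
$$|F(t)|\le C'\Big(|t|^{\frac{N+\alpha}{N}}+|t|^{\frac{N+\alpha}{N-2s_\sharp}}\Big).$$
Set $p:=\frac{2N}{N+\alpha}$. The two powers raised to $p$ become $|u|^2$ and $|u|^{2^*_{s_\sharp}}$. Minkowski's inequality then gives
$$\|F(u)\|_p\le C'\Big(\|u\|_2^{\frac{N+\alpha}{N}}+\|u\|_{2^*_{s_\sharp}}^{\frac{N+\alpha}{N-2s_\sharp}}\Big),$$
and squaring with $(a+b)^2\le 2a^2+2b^2$ produces the exponents $2\frac{N+\alpha}{N}$ and $2\frac{N+\alpha}{N-2s_\sharp}$.

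\emph{Step 3 (embedding).} By definition, $\|u\|_2\le\|u\|_\mu$. By \eqref{embendind homogeneous space} and Proposition~\ref{Prop H mu in H s},
$$\|u\|_{2^*_{s_\sharp}}\le C[u]_{s_\sharp}\le C\Big(1+\mu^+([s_\sharp,1])^{-1}\Big)^{1/2}\|u\|_\mu.$$
This is precisely \eqref{aggcompatto00}. Inserting both bounds into Step 2 gives the claimed estimate.

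There is no real obstacle here. The only point needing care is exponent bookkeeping: checking that the HLS exponent pairs with $(F_2)$ to give exactly $L^2$ and $L^{2^*_{s_\sharp}}$ norms. One should also note that the constant picks up a dependence on $\mu^+([s_\sharp,1])$ through the embedding of Proposition~\ref{Prop H mu in H s}.
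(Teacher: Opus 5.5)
Your proof is correct and follows the paper's argument: Hardy--Littlewood--Sobolev with $r=t=\frac{2N}{N+\alpha}$, the bound $|F(t)|\le C\big(|t|^{\frac{N+\alpha}{N}}+|t|^{\frac{N+\alpha}{N-2s_\sharp}}\big)$ coming from $(F_2)$, and then the embedding \eqref{aggcompatto00}; you spell out the exponent bookkeeping that the paper compresses into one line. Your remark that the final constant also depends on $\mu^+([s_\sharp,1])$, through Proposition~\ref{Prop H mu in H s}, is accurate, and the dependence list in the statement omits this.
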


\begin{proof}
From~$(F_2)$ we know that
$$ |F(u)|\le C \left( |u|^{\frac{N+\alpha}{N}}+
|u|^{\frac{N+\alpha}{N-2s_\sharp}}\right).$$
Therefore, using the Hardy-Littlewood-Sobolev inequality in Proposition~\ref{prop:HLS}, we see that
\begin{eqnarray*}
|\mathcal{D}(u)|=\left|\int_{\R^N} (I_\alpha * F(u))F(u)\, dx\right|\le \widetilde C\left( \|u\|_2^{2\frac{N+\alpha}N}
+\|u\|_{2^*_{s_\sharp}}^{2\frac{N+\alpha}{N-2s_\sharp}}
\right),
\end{eqnarray*}
for some~$\widetilde C>0$, depending on~$N$, $\alpha$, $s_\sharp$, and~$C$.

From this and~\eqref{aggcompatto00}, we obtain the desired estimate.
\end{proof}

This observation is useful to check that the functional is positive for small values of the norm, as pointed 
out in the next result:

\begin{lemma}\label{bjhkewr793798436780jfs7y6t5relk0}
There exist~$\gamma_0>0$ (depending on~$\beta$, $\mu^+$, and~$\bar s$), 
$\rho\in(0,1]$ and $c>0$ (depending on~$N$, $\alpha$, $\beta$,
$s_\sharp$, and the constant~$C$ appearing in~$(F_2)$)
such that for all~$\gamma\in[0,\gamma_0]$
we have that~$\mathcal J_{\beta}(u)\geq c$ for any~$u\in H_r^{\mu}(\mathbb{R}^N)$
with~$\|u\|_{\mu}=\rho$.
\end{lemma}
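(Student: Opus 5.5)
The plan is to bound $\mathcal J_\beta$ from below by a positive multiple of $\|u\|_\mu^2$, minus the nonlinear term controlled by Lemma~\ref{bjkewyr8392659843659jnhbgf}, and then pick $\rho$ small.

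\textbf{Step 1 (reabsorbing the negative part).} By Proposition~\ref{assorb measure}, for any $u\in H^\mu_r(\R^N)$,
$$\frac12\int_{[0,\bar s]}[u]_s^2\,d\mu^-(s)\le \frac\gamma2\Big(2\mu^+([\bar s,1])\|u\|_2^2+\int_{[0,1]}[u]_s^2\,d\mu^+(s)\Big).$$
Recall that $\|(-\Delta)^{s/2}u\|_2=[u]_s$ by \eqref{seminorm_s}. Inserting this into \eqref{functional associated to the problem} gives
$$\mathcal J_\beta(u)\ge \frac{1-\gamma}{2}\int_{[0,1]}[u]_s^2\,d\mu^+(s)+\frac{\beta-2\gamma\mu^+([\bar s,1])}{2}\|u\|_2^2-\frac12\mathcal D(u).$$
Now choose $\gamma_0:=\min\{\tfrac12,\ \beta/(4\mu^+([\bar s,1]))\}$; this is positive by \eqref{positivity property 1} and depends only on $\beta$, $\mu^+$ and $\bar s$. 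For $\gamma\le\gamma_0$ both coefficients are at least $\tfrac12\min\{\tfrac12,\beta/2\}\ge\tfrac14\min\{1,\beta\}$. Hence
$$\mathcal J_\beta(u)\ge \kappa\|u\|_\mu^2-\tfrac12\mathcal D(u),\qquad \kappa:=\tfrac14\min\{1,\beta\}.$$

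\textbf{Step 2 (nonlinear term).} By Lemma~\ref{bjkewyr8392659843659jnhbgf}, which combines \eqref{aggcompatto00} with the Hardy--Littlewood--Sobolev inequality,
$$\mathcal J_\beta(u)\ge \kappa\|u\|_\mu^2-\widetilde C\big(\|u\|_\mu^{2\frac{N+\alpha}{N}}+\|u\|_\mu^{2\frac{N+\alpha}{N-2s_\sharp}}\big).$$
Both exponents are strictly greater than $2$, since $\alpha>0$. So on the sphere $\|u\|_\mu=\rho\le1$ we have
$$\mathcal J_\beta(u)\ge \rho^2\big(\kappa-2\widetilde C\rho^{2\alpha/N}\big).$$
Here we use that $\rho^{2\frac{N+\alpha}{N-2s_\sharp}}\le\rho^{2\frac{N+\alpha}{N}}$ for $\rho\le1$.

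\textbf{Step 3 (choice of $\rho$).} Fix $\rho\in(0,1]$ with $2\widetilde C\rho^{2\alpha/N}\le\kappa/2$, and set $c:=\kappa\rho^2/2$. Then $\rho$ and $c$ depend only on $N,\alpha,\beta,s_\sharp$ and the constant in $(F_2)$, and not on $\gamma$, as required.

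The only delicate point is Step~1. The reabsorption in Proposition~\ref{assorb measure} produces an $L^2$ term, so it has to be absorbed by $\beta\|u\|_2^2$; this is why $\gamma_0$ must depend on $\beta$. We also need the bound with $\int_{[\bar s,1]}$ replaced by the larger $\int_{[0,1]}$, which is legitimate because $\mu^+\ge0$.
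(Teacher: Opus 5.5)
Your proposal is correct and follows the paper's proof essentially line by line. You reabsorb the $\mu^-$ term via Proposition~\ref{assorb measure}, absorb the resulting $L^2$ contribution into $\frac{\beta}{2}\|u\|_2^2$ for small $\gamma$ to get $\mathcal J_\beta(u)\ge\frac14\min\{1,\beta\}\|u\|_\mu^2-\frac12\mathcal D(u)$, then control $\mathcal D(u)$ by Lemma~\ref{bjkewyr8392659843659jnhbgf} and take $\rho$ small. Your explicit choice $\gamma_0=\min\{\frac12,\beta/(4\mu^+([\bar s,1]))\}$ just makes precise what the paper leaves as ``$\gamma$ sufficiently small''.
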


\begin{proof}
By Proposition~\ref{assorb measure}, we have that  \begin{align*}
    \mathcal J_\beta(u)\geq
 \frac{1-\gamma}{2}\int_{[0,1]} \|(-\Delta)^{\frac{s}{2}}u\|^2_2\, d\mu^+(s)-\gamma\mu^+([\bar s, 1]) \|u\|^2_2
+ \frac{\beta}{2} \|u\|_{2}^2 -\frac{1}{2}\mathcal{D}(u).
    \end{align*} 
Hence, by taking~$\gamma  $ sufficiently small, possibly in dependence of~$\beta$, $\mu^+$ and~$\bar s$,
we conclude that
   \begin{eqnarray*} \mathcal J_\beta(u)&\geq& 
   \frac{1}4 \int_{[0,1]}   \|(-\Delta)^{\frac{s}{2}} u\|_2^2\, d\mu^+(s)
   + \frac{\beta }4\|u\|^2_2 -\frac{1}{2}\mathcal{D}(u)\\&\geq&
   \frac14 \min\{1,\beta\}   \|u\|_{\mu}^2
   -\frac{1}{2}\mathcal{D}(u)
    .\end{eqnarray*}
    
    Thus, using the estimate in Lemma~\ref{bjkewyr8392659843659jnhbgf}, if~$\|u\|_\mu\le1$,
    \begin{eqnarray*}
    \mathcal J_\beta(u)\geq  \frac14 \min\{1,\beta\}   \|u\|_{\mu}^2- C \|u\|_\mu^{2+\frac{2\alpha}N}=
   \|u\|_{\mu}^2\left(  \frac14 \min\{1,\beta\}  - C \|u\|_\mu^{\frac{2\alpha}N}\right).
    \end{eqnarray*}
    Now we pick
    $$ \rho:=\min\left\{\left(\frac1{8C} \min\{1,\beta\}\right)^{\frac{N}{2\alpha}}, 1\right\} $$
    and we find that, if~$\|u\|_\mu=\rho$,
    $$  \mathcal J_\beta(u)\geq\frac18 \min\{1,\beta\}\rho^2.$$
    Thus, choosing $$c:=\frac18 \min\{1,\beta\}\rho^2,$$ we obtain the desired result.
\end{proof}


\begin{lemma}\label{bjhkewr793798436780jfs7y6t5relk}
There exists a function~$v\in H_r^{\mu}(\mathbb{R}^N)\setminus\{0\}$ such that~$\|v\|_\mu\geq 2$,
$$ \mathcal{D}(v)>0\qquad{\mbox{and}}\qquad \mathcal J_{\beta}(v)<0.$$
\end{lemma}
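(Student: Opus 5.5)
We follow the classical Berestycki--Lions construction, adapted to the Choquard term as in~\cite{MV}, and then use a dilation that makes the nonlocal term dominate.

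\emph{Step 1: a radial function with $\mathcal{D}(w)>0$.} By $(F_4)$ there is $t_0\neq0$ with $F(t_0)\neq0$. For $R>0$ we take a radial cut-off $w_R\in C^\infty_0(\R^N)$ with $w_R=t_0$ on $B_R$, $w_R=0$ outside $B_{R+1}$, and $|w_R|\le|t_0|$. Then $F(w_R)=F(t_0)$ on $B_R$, while on the annulus $|F(w_R)|\le M:=\max_{|t|\le|t_0|}|F(t)|$. Since $I_\alpha>0$, we split
$$\mathcal{D}(w_R)=F(t_0)^2\iint_{B_R\times B_R}I_\alpha(x-y)\,dx\,dy+\text{(terms involving the annulus)}.$$
The main term is of order $R^{N+\alpha}$. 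Each remaining term is bounded, via Hardy--Littlewood--Sobolev (Proposition~\ref{prop:HLS}) with $r=t=\frac{2N}{N+\alpha}$, by $C M^2 |B_{R+1}\setminus B_R|^{\frac{N+\alpha}{2N}}\,|B_{R+1}|^{\frac{N+\alpha}{2N}}$, which is $O(R^{(N-1)\frac{N+\alpha}{2N}+\frac{N+\alpha}{2}})=o(R^{N+\alpha})$. Hence $\mathcal{D}(w)>0$ for $w:=w_R$ with $R$ large. Since $w$ is smooth with compact support, $w\in H^\mu_r(\R^N)$, and $\mathcal{D}$ is finite by Lemma~\ref{bjkewyr8392659843659jnhbgf}.

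\emph{Step 2: dilation.} We set $v_t(x):=w(x/t)$ for $t\ge1$. Changing variables gives
$$[v_t]_s^2=t^{N-2s}[w]_s^2,\qquad \|v_t\|_2^2=t^N\|w\|_2^2,\qquad \mathcal{D}(v_t)=t^{N+\alpha}\mathcal{D}(w),$$
using that $I_\alpha$ is homogeneous of degree $-(N-\alpha)$. We drop the nonpositive $\mu^-$ term and use $t^{N-2s}\le t^N$ for $t\ge1$. Then
$$\mathcal J_\beta(v_t)\le \frac{t^N}{2}\Big(\int_{[0,1]}[w]_s^2\,d\mu^+(s)+\beta\|w\|_2^2\Big)-\frac{t^{N+\alpha}}{2}\mathcal{D}(w).$$
The right-hand side tends to $-\infty$ as $t\to+\infty$ because $\alpha>0$ and $\mathcal{D}(w)>0$. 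Finiteness of $\int[w]_s^2\,d\mu^+$ follows from Lemma~\ref{fractional immersion} with $s_2=1$ and the finiteness of $\mu^+$. Moreover $\|v_t\|_\mu^2\ge\|v_t\|_2^2=t^N\|w\|_2^2\to\infty$ and $\mathcal{D}(v_t)>0$. Choosing $t$ large, $v:=v_t$ is radial, nonzero, and satisfies all three requirements.

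\emph{Main obstacle.} The only delicate point is Step~1, namely showing that the boundary contribution is negligible. The H\"older/HLS bound above handles it, provided we track that the annulus has measure $O(R^{N-1})$. No smallness of $\gamma$ is needed here, since the $\mu^-$ term enters $\mathcal J_\beta$ with a favorable sign for an upper bound.
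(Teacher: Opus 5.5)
Your proof is correct. Step~2 is essentially the paper's dilation argument. The paper writes the bound as $t^{-2s-\alpha}\le t^{-\alpha}$ and proves $\|\psi(t)\|_\mu^2\ge t^{N-2}\|w\|_\mu^2$; you keep only the $L^2$ part $t^N\|w\|_2^2$ of the norm, which is a little cleaner.

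The genuine difference is in Step~1. The paper starts from the discontinuous function $t_0\chi_{B_1}$, for which
\[
\mathcal{D}(t_0\chi_{B_1})=F(t_0)^2\iint_{B_1\times B_1}I_\alpha(x-y)\,dx\,dy>0
\]
is immediate. It then argues qualitatively: by $(F_2)$ and Hardy--Littlewood--Sobolev, $\mathcal{D}$ is continuous on $L^2(\R^N)\cap L^{2^*_{s_\sharp}}(\R^N)$, so a radial smooth approximation of $t_0\chi_{B_1}$ still has $\mathcal{D}>0$.

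You instead build the smooth radial plateau function directly and control the boundary layer quantitatively, in the Berestycki--Lions / Moroz--Van Schaftingen style. The main term scales exactly like $R^{N+\alpha}$, while HLS bounds the annulus contributions by $O\big(R^{\frac{(2N-1)(N+\alpha)}{2N}}\big)=o(R^{N+\alpha})$.

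The paper's route is shorter. However, it relies on the continuity of the Nemytskii map $u\mapsto F(u)$ into $L^{\frac{2N}{N+\alpha}}(\R^N)$, which the paper asserts rather than proves. Your route is self-contained: it uses only HLS and the homogeneity of $I_\alpha$. It also produces an explicit radial $w\in C^\infty_0(\R^N)$, so membership in $H^\mu_r(\R^N)$ and finiteness of all the terms in $\mathcal J_\beta(w)$ are automatic.
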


\begin{proof}
Let~$t_0$ be as in~$(F_4)$ and\footnote{As customary, for any set~$A\subseteq\R^N$, we use the notation $$\chi_A(x):=\begin{cases}
1, {\mbox{ if }} x\in A,\\
0, {\mbox{ if }} x\in\R^N\smallsetminus  A.
\end{cases}$$} take~$\widetilde w:=t_0\chi_{B_1}$. In this way, we see that
\begin{eqnarray*}&&\int_{\R^N}  (I_\alpha \ast F(\widetilde w)(x))F(\widetilde w(x))\,dx=
\iint_{\R^{2N}}  I_\alpha(x-y) F(\widetilde w(y))F(\widetilde w(x))\,dx\,dy\\&&\qquad
=(F(t_0))^2\iint_{B_1\times B_1}  I_\alpha(x-y)\,dx\,dy>0
.
\end{eqnarray*} By~$(F_2)$, the first term in the above formula is continuous in~$L^2(\R^N)\cap L^{\frac{2N}{N-2s_\sharp}}(\R^N)$, therefore, by density,
there exists a function~$w\in H_r^{\mu}(\mathbb{R}^N)$ such that
$$ \mathcal{D}(w)=\int_{\R^N}  (I_\alpha \ast F(w)(x))F(w(x))\,dx>0.$$

Now, for any~$t\ge0$, we consider the function~$\psi(t)\in H_r^{\mu}(\mathbb{R}^N)$ defined as
\begin{equation*} \psi(t):=\begin{cases}
w\left(\frac{\cdot}{t}\right), &{\mbox{ if }} t>0,\\
0, &{\mbox{ if }} t=0\end{cases}\end{equation*}
and we have that
\begin{align*}
    \mathcal J_\beta(\psi(t))&\leq 
    \frac{1}{2} \int_{[0,1]} \|(-\Delta)^{\frac{s}{2}} \psi(t)\|_{2}^2\, d\mu^+(s)+ \frac{\beta}{2} \|\psi(t)\|_{2}^2-\frac{1}{2}\mathcal{D}(\psi(t))\\ &= \int_{[0,1]} \frac{t^{N-2s}}{2} \|(-\Delta)^{\frac{s}{2}} w\|_{2}^2\, d\mu^+(s)+ \frac{\beta t^N}{2} \|w\|_{2}^2-\frac{t^{N+\alpha}}{2}\mathcal{D}(w)\\
     &=\frac{t^{N+\alpha}}{2} \left( \int_{[0,1]} t^{-2s-\alpha} \|(-\Delta)^{\frac{s}{2}} w\|_{2}^2\, d\mu^+(s)+ \beta t^{-\alpha}\|w\|_{2}^2-\mathcal{D}(w)\right).
\end{align*}
Therefore, if $t\geq 1$, we infer that
\begin{equation}\label{bncxiwi5uy98430thewsd}
    \mathcal J_\beta(\psi(t))
    \leq \frac{t^{N+\alpha}}{2} \left[ t^{-\alpha}\left(\int_{[0,1]} \|(-\Delta)^{\frac{s}{2}} w\|_{2}^2\, d\mu^+(s)+ \beta \|w\|_{2}^2\right)-\mathcal{D}(w)\right].\end{equation}
    
We now pick
$$ t_\star:=\max \left\{\left(\frac{\displaystyle 2\int_{[0,1]} \|(-\Delta)^{\frac{s}{2}} w\|_2^2\, d\mu^+(s)+2 \beta \|w\|_2^2}{D(w)}\right)^{\frac1{\alpha}},
\left(\frac{4}{\|w\|^2_\mu}\right)^{\frac1{N-2}}\right\} +1$$ and we conclude that~$\mathcal J_\beta(\psi(t_\star)) < 0$.

Moreover,
\begin{eqnarray*}
\|\psi(t_\star)\|_\mu^2&=& \int_{[0,1]} \|(-\Delta)^{\frac{s}{2}}\psi(t_\star)\|_{2}^2\, d\mu^+(s)+ \|\psi(t_\star)\|^2_{2}\\&
=&  \int_{[0,1]} t_\star^{N-2s} \|(-\Delta)^{\frac{s}{2}} w\|_{2}^2\, d\mu^+(s)+  t^N_\star \|w\|_{2}^2\\&\geq&
t_\star^{N-2}
\left( \int_{[0,1]} \|(-\Delta)^{\frac{s}{2}} w\|_{2}^2\, d\mu^+(s)+  t^2_\star \|w\|_{2}^2\right)\\&\geq&
t_\star^{N-2}\|w\|_\mu^2\\&\geq &4.
\end{eqnarray*}
The proof is thereby complete by taking~$v:=\psi(t_\star)$.
\end{proof}

We now introduce the set of paths
\begin{equation}\label{cjkwhoufguo7654}
    \Gamma_\beta:=\Big\{\psi \in C([0,1], H_r^{\mu}(\mathbb{R}^N))\,\, |\,\,  \psi(0)=0,\, \, \, \,\mathcal{J}_\beta(\psi(1))<0\Big\}
\end{equation}
and we point out that~$\Gamma_\beta\neq\varnothing$
thanks to Lemma~\ref{bjhkewr793798436780jfs7y6t5relk}.

We define the Mountain Pass value as\begin{equation}\label{mplevel00}
    l(\beta):= \inf\limits_{\psi\in \Gamma_\beta}\max\limits_{t\in[0,1]} \mathcal J_{\beta}(\psi(t)).
\end{equation}

We also introduce the Pohozaev functional $\mathcal{P}_\beta : H_r^{\mu}(\mathbb{R}^N)\to \mathbb{R}$ defined as
\begin{equation}
    \label{Pohozaev functional}
    \begin{split}
    \mathcal{P}_\beta(u):=&\int_{[0,1]} \frac{N-2s}{2} \|(-\Delta)^{\frac{s}{2}}u \|^2_2\, d\mu^+(s)-\int_{[0,1]} \frac{N-2s}{2} \|(-\Delta)^{\frac{s}{2}}u \|^2_2\, d\mu^-(s)\\
    &\qquad+\frac{\beta N}{2}\|u\|^2_{2}  -\frac{N+\alpha}{2}\mathcal{D}(u)
 \end{split}
\end{equation}
and we define
the least energy of $\mathcal J_{\beta}$ on the Pohozaev set as \begin{equation}\label{poholevel00}
    p(\beta):=\inf\big\{\mathcal J_\beta(u)\,\, | \,\, u\in H_r^{\mu}(\mathbb{R}^N)\setminus\{0\},\,\, \mathcal{P}_\beta(u)=0\big\}
\end{equation}We now  check that our functional~$\mathcal{J}_\beta$ defined in~\eqref{functional associated to the problem}
satisfies the
Palais-Smale condition. 
To meet this goal, we provide a result that guarantees a suitable convergence of the term involving the negative part of~$\mu$.

\begin{lemma}\label{lemma:agg0547543ygf}
Let~$u_n$ be a bounded  sequence in~$H^\mu_r(\R^N)$.

Then, there exists~$u:\R^N\to\R$ such that, up to a subsequence, for every~$v\in H^\mu_r(\R^N)$,
\begin{equation}\label{9er4kjzsdvjhawrygu3wgfruaws}\begin{split}& \lim_{n\to+\infty}
\int_{[0,1]} \iint_{\R^{2N}} C_{N,s}\frac{(u_n(x)-u_n(y))(v(x)-v(y))}{|x-y|^{N+2s}}\, dx\, dy\, d\mu^-(s)\\&\qquad
=\int_{[0,1]} \iint_{\R^{2N}} C_{N,s}\frac{(u(x)-u(y)) (v(x)-v(y))}{|x-y|^{N+2s}}\, dx\, dy\, d\mu^-(s).\end{split}\end{equation}
Also,
\begin{equation}\label{9er4kjzsdvjhawrygu3wgfruaws2}
{\mbox{$u_n$ converges to~$u$ a.e. in~$\R^N$ as~$n\to+\infty$.}}\end{equation}
\end{lemma}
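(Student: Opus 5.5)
We will use weak compactness in a Hilbert space adapted to the measure $\mu^-$ and identify the limit through the compact embedding \eqref{aggcompatto}.

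Since $(u_n)$ is bounded in $H^\mu_r(\R^N)$, it is bounded in $L^2(\R^N)$. Applying Proposition~\ref{assorb measure} (or directly Lemma~\ref{fractional immersion} together with \eqref{positivity property 2} and \eqref{positivity property 3}), we get
\[
\int_{[0,1]}[u_n]_s^2\,d\mu^-(s)\le C\|u_n\|_\mu^2,
\]
so $(u_n)$ is also bounded in the "$\mu^-$-energy". Consider the Hilbert space $\mathcal{H}:=L^2([0,1]\times\R^{2N},\,d\nu)$ with
\[
d\nu:=C_{N,s}\,|x-y|^{-N-2s}\,d\mu^-(s)\,dx\,dy,
\]
and the maps $U_n(s,x,y):=u_n(x)-u_n(y)$. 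These satisfy $\|U_n\|_{\mathcal H}^2=\int[u_n]_s^2\,d\mu^-\le C$. Similarly, for $v\in H^\mu_r(\R^N)$ the function $V(s,x,y):=v(x)-v(y)$ lies in $\mathcal H$, with the same bound.

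For the extraction, we first use the reflexivity of $H^\mu_r(\R^N)$ (it is a Hilbert space) to find a subsequence with $u_n\rightharpoonup u$ weakly in $H^\mu_r(\R^N)$. By the compact embedding \eqref{aggcompatto}, $u_n\to u$ strongly in $L^p(\R^N)$ for some $p\in(2,2^*_{s_\sharp})$. A further subsequence then converges a.e. in $\R^N$, which gives \eqref{9er4kjzsdvjhawrygu3wgfruaws2}. Along a further subsequence, $U_n\rightharpoonup W$ weakly in $\mathcal H$.

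The main step is to identify $W$ with $U(s,x,y):=u(x)-u(y)$. Since $u_n\to u$ a.e. in $\R^N$, we have $U_n\to U$ for $\nu$-a.e. $(s,x,y)$; indeed, the exceptional set has the form $[0,1]\times(E\times\R^N\cup\R^N\times E)$ with $|E|=0$, which is $\nu$-null. By Fatou's lemma, $U\in\mathcal H$. We then use the standard fact that a bounded sequence in $L^2(d\nu)$ converging $\nu$-a.e. converges weakly to its pointwise limit. To prove it, test against $\Phi\in\mathcal H$ supported on a set $A$ of finite $\nu$-measure with $\Phi$ bounded there; Egorov's theorem on $A$, combined with uniform integrability coming from the $L^2$ bound, gives $\int U_n\Phi\,d\nu\to\int U\Phi\,d\nu$. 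Density of such $\Phi$ in $\mathcal H$ then yields $W=U$.

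Finally, \eqref{9er4kjzsdvjhawrygu3wgfruaws} is exactly $\langle U_n,V\rangle_{\mathcal H}\to\langle U,V\rangle_{\mathcal H}$, which follows from the weak convergence just established. We expect the only delicate point to be this identification of the weak limit via a.e. convergence, since $\nu$ is not a finite measure; the Egorov and uniform-integrability argument on sets of finite measure handles it. The same argument works with $\mu^+$ in place of $\mu^-$.
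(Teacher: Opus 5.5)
Your argument is correct, but it is organized differently from the paper's. The paper distinguishes three cases: $\mu^-=0$, $\mu^-=\delta_0$, and $\mu^-((0,\bar s])>0$. In the last case it applies Banach--Alaoglu in the Hilbert space normed by $\int_{[0,1]}[\cdot]_s^2\,d\mu^-(s)+\|\cdot\|_2^2$ and extracts a weak $L^2$ limit $w$ separately. It then identifies the two limits through an $L^2_{\rm loc}$ compactness argument, based on a uniform $H^{\varepsilon_0}$ bound with $\varepsilon_0>0$ chosen so that $\mu^-([\varepsilon_0,\bar s])>0$.

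You instead take the weak limit directly in $H^\mu_r(\R^N)$ and read off the a.e. convergence from \eqref{aggcompatto}. This removes both the case distinction and the auxiliary exponent $\varepsilon_0$. It also has the benefit that $u$ is, by construction, the weak $H^\mu_r$-limit, which is exactly the function $u_0$ to which the lemma is applied in the proof of Proposition~\ref{palais-smale-prop}.

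Your identification step in $L^2(d\nu)$, via Egorov and uniform integrability, is correct but can be skipped. For fixed $v$, the map $w\mapsto\langle w(x)-w(y),\,v(x)-v(y)\rangle_{\mathcal H}$ is a bounded linear functional on $H^\mu_r(\R^N)$, by Cauchy--Schwarz in $\mathcal H$ and Proposition~\ref{assorb measure}. Hence \eqref{9er4kjzsdvjhawrygu3wgfruaws} follows immediately from $u_n\rightharpoonup u$ in $H^\mu_r(\R^N)$.

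A minor point concerns a possible atom of $\mu^-$ at $s=0$. Since $C_{N,0}=0$, your measure $\nu$ does not see it. In the paper's convention (compare its separate treatment of $\mu^-=\delta_0$), such an atom contributes $\mu^-(\{0\})\int_{\R^N}u_n v\,dx$ to the bilinear form. Your identity $\|U_n\|_{\mathcal H}^2=\int_{[0,1]}[u_n]_s^2\,d\mu^-(s)$ then holds only as an inequality. This is harmless, because the extra term converges by weak $L^2$ convergence, but you should add it explicitly.
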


\begin{proof}
We notice that if~$\mu^-$ is the null measure, then~\eqref{9er4kjzsdvjhawrygu3wgfruaws} is void and~\eqref{9er4kjzsdvjhawrygu3wgfruaws2} follows from the compact embedding in~\eqref{aggcompatto}.

If instead~$\mu^-$ is the Dirac measure at~$0$, then the embedding in~\eqref{aggcompatto00}
gives the existence of~$u$ such that~\eqref{9er4kjzsdvjhawrygu3wgfruaws} is satisfied.
Moreover,
from~\eqref{aggcompatto} we know that
\begin{equation}\label{dslkhgvlkhgds345678mnvxzfiewpihtihgriep}
{\mbox{$u_n$ converges  to some~$v$ in~$L^p_{\rm{loc}}(\R^N)$ for all~$p\in[1,2^*_{s_\sharp})$}}\end{equation}
and therefore almost everywhere in~$\R^N$.

We now claim that
\begin{equation}\label{ncr9043iglsjhgldjlv} u=v.\end{equation}
Indeed, let~$\varphi\in C^\infty_0(\R^N)$ and suppose that the support of~$\varphi$ is contained in~$B_R$ for some~$R>0$.
Then, we know from~\eqref{9er4kjzsdvjhawrygu3wgfruaws} that
\begin{equation}\label{cdsf8iu43y8toiew9854kDFGHJKjvdkjhds000}
\lim_{n\to+\infty}\int_{\R^N} u_n(x)\varphi(x)\,dx= \int_{\R^N} u(x)\varphi(x)\,dx. \end{equation}

Also,
\begin{equation}\label{cdsf8iu43y8toiew9854kDFGHJKjvdkjhds}
\int_{\R^N} u_n(x)\varphi(x)\,dx= \int_{B_R} u_n(x)\varphi(x)\,dx=
\int_{B_R} (u_n(x)-v(x)) \varphi(x)\,dx+\int_{B_R} v(x) \varphi(x)\,dx.
\end{equation}
By the H\"older inequality, we have that
\begin{eqnarray*}\left|
\int_{B_R} (u_n(x)-v(x)) \varphi(x)\,dx\right|\le \left(\int_{B_R} |u_n(x)-v(x)|^2\,dx\right)^{\frac12}
\left(\int_{B_R}|\varphi(x)|^2\,dx\right)^{\frac12}\le C\|u_n-v\|_{L^2(B_R)}.
\end{eqnarray*} This and~\eqref{dslkhgvlkhgds345678mnvxzfiewpihtihgriep} imply that
$$ \lim_{n\to+\infty} \int_{B_R} (u_n(x)-v(x)) \varphi(x)\,dx=0.$$
From this and~\eqref{cdsf8iu43y8toiew9854kDFGHJKjvdkjhds} we conclude that
$$ \lim_{n\to+\infty} \int_{\R^N} u_n(x)\varphi(x)\,dx= \int_{B_R} v(x) \varphi(x)\,dx= \int_{\R^N} v(x) \varphi(x)\,dx. $$
This, together with~\eqref{cdsf8iu43y8toiew9854kDFGHJKjvdkjhds000}, entails~\eqref{ncr9043iglsjhgldjlv}, and completes the proof of~\eqref{9er4kjzsdvjhawrygu3wgfruaws2} in this case.
Hence, from now on we suppose that~$\mu^-((0,\bar s])>0$.
We observe that, thanks to Proposition~\ref{assorb measure},
\begin{eqnarray*}&&
\int_{[0,1]} [u_n]_s^2 \, d\mu^-(s) = \int_{[0,\bar s]} [u_n]_s^2 \, d\mu^-(s) \leq  \gamma \left( 
        2 \mu^+([\bar s, 1]) \|u_n\|^2_2+ \int_{[\bar s,1]} [u_n]_s^2\, d\mu^+(s)\right)\\&&\qquad\le
        \gamma \left( 
        2 \mu^+([\bar s, 1]) \|u_n\|^2_2+ \int_{[0,1]} [u_n]_s^2\, d\mu^+(s)\right)
        \end{eqnarray*} and this quantity is bounded uniformly in~$n$.
As a consequence of this and the Banach-Alaoglu Theorem
we obtain that there exist~$u$, $w$ such that, for all~$v\in  H^\mu_r(\R^N)$,
\begin{equation}\label{saru34yetukwhtikwetuio00}\begin{split}& \lim_{n\to+\infty}
\int_{[0,1]} \iint_{\R^{2N}} C_{N,s}\frac{(u_n(x)-u_n(y))(v(x)-v(y))}{|x-y|^{N+2s}}\, dx\, dy\, d\mu^-(s)
+\int_{\R^N} u_n(x)v(x)\,dx
\\&\qquad
=\int_{[0,1]} \iint_{\R^{2N}} C_{N,s}\frac{(u(x)-u(y)) (v(x)-v(y))}{|x-y|^{N+2s}}\, dx\, dy\, d\mu^-(s)+\int_{\R^N} u(x)v(x)\,dx
\end{split}\end{equation}
and, for all~$v\in L^2(\R^N)$,
\begin{equation}\label{saru34yetukwhtikwetuio} \lim_{n\to+\infty}
\int_{\R^N} u_n(x)v(x)\,dx
=\int_{\R^N} w(x)v(x)\,dx
.\end{equation}

Now, by the Dominated Convergence Theorem, we have that
$$ \lim_{\varepsilon\searrow0}\mu^-([\varepsilon,\bar s])=\mu^-((0,\bar s])>0.$$
Hence, there exists~$\varepsilon_0\in(0,\bar s]$ such that~$\mu^-([\varepsilon_0,\bar s])>0$.

From this and Lemma~\ref{fractional immersion} we thus find that, for all~$s\in[\varepsilon_0,\bar s]$,
\begin{equation*}
[u_n]^2_{\varepsilon_0}\leq \|u_n\|_2^2 + [u_n]^2_{s}
\end{equation*} and therefore, integrating in~$\mu^-$ over~$[\varepsilon_0,\bar s]$,
$$\mu^-([\varepsilon_0,\bar s]) [u_n]^2_{\varepsilon_0}\leq \mu^-([\varepsilon_0,\bar s]) \|u_n\|_2^2 + \int_{[\varepsilon_0,\bar s]}[u_n]^2_{s}\,d\mu^-(s),$$
which is bounded uniformly in~$n$, in light of
Proposition~\ref{assorb measure}.

This and the compactness result for fractional Sobolev spaces (see e.g.~\cite{DPV}) give that
\begin{equation}\label{saru34yetukwhtikwetuio0076}
{\mbox{$u_n$ converges to~$u$ in~$L^2_{\rm{loc}}(\R^N)$
as~$n\to+\infty$.}}\end{equation}
Accordingly, if~$v\in C^\infty_0(\R^N)$, we deduce from~\eqref{saru34yetukwhtikwetuio} that
\begin{eqnarray*}&& \int_{\R^N} w(x)v(x)\,dx
= \lim_{n\to+\infty}
\int_{\R^N} u_n(x)v(x)\,dx
\\&&\qquad= \lim_{n\to+\infty}
\int_{\R^N} \big(u_n(x)- u(x)\big)v(x)\,dx+
\int_{\R^N}  u(x)v(x)\,dx= 
\int_{\R^N}  u(x)v(x)\,dx\end{eqnarray*}
and therefore~$u=w$.
From this information, \eqref{saru34yetukwhtikwetuio00} and~\eqref{saru34yetukwhtikwetuio}
we obtain~\eqref{9er4kjzsdvjhawrygu3wgfruaws}.
Furthermore, the claim in~\eqref{9er4kjzsdvjhawrygu3wgfruaws2} is a consequence of~\eqref{saru34yetukwhtikwetuio0076}.
\end{proof}

\begin{proposition}\label{palais-smale-prop}
There exists~$\gamma_0>0$, depending on~$N$, $\alpha$, $\beta$, $\mu^+$, and~$\bar s$, such that for all~$\gamma\in[0,\gamma_0]$ the following statement holds true.

The functional $\mathcal{J}_\beta$ satisfies the Palais-Smale condition at every level $b \in \mathbb{R}$ in the following sense: any sequence $\{u_n\} \subset H_r^\mu(\mathbb{R}^N)$ satisfying
\begin{align}
    &\mathcal{J}_\beta (u_n) \to b, \label{1-PSP} \\
    &\mathcal{J}'_\beta(u_n) \to 0 \quad \text{strongly in } (H_r^\mu(\mathbb{R}^N))^*, \label{2-PSP} \\
    &\mathcal{P}_\beta(u_n) \to 0, \label{3-PSP}
\end{align} as~$n\to+\infty$,
admits a strongly convergent subsequence in $H_r^\mu(\mathbb{R}^N)$.
\end{proposition}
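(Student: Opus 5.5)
The plan has three steps: bound the sequence using the combination $\mathcal J_\beta-\frac1{N+\alpha}\mathcal P_\beta$, which eliminates $\mathcal D$; extract a limit using the radial compactness \eqref{aggcompatto} together with Lemma~\ref{lemma:agg0547543ygf}; and upgrade weak to strong convergence by testing $\mathcal J'_\beta(u_n)-\mathcal J'_\beta(u)$ against $u_n-u$.

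\textbf{Step 1 (boundedness).} A direct computation gives
\begin{equation*}
\mathcal J_\beta(u_n)-\frac{1}{N+\alpha}\mathcal P_\beta(u_n)=\int_{[0,1]}\frac{\alpha+2s}{2(N+\alpha)}[u_n]_s^2\,d\mu^+(s)-\int_{[0,\bar s]}\frac{\alpha+2s}{2(N+\alpha)}[u_n]_s^2\,d\mu^-(s)+\frac{\beta\alpha}{2(N+\alpha)}\|u_n\|_2^2 .
\end{equation*}
By \eqref{1-PSP} and \eqref{3-PSP} the left-hand side equals $b+o(1)$.

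\begin{itemize}
\item The positive-part coefficient is at least $\frac{\alpha}{2(N+\alpha)}$.
\item The negative-part coefficient is at most $\frac{\alpha+2}{2(N+\alpha)}$.
\item Proposition~\ref{assorb measure} bounds the negative term by $\gamma\frac{\alpha+2}{2(N+\alpha)}\big(2\mu^+([\bar s,1])\|u_n\|_2^2+\int_{[0,1]}[u_n]_s^2\,d\mu^+\big)$.
\end{itemize}

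Choosing $\gamma_0$ small, depending on $N,\alpha,\beta,\mu^+([\bar s,1])$, this error is absorbed into half of the positive terms. We obtain $c\|u_n\|_\mu^2\le b+1$ for large $n$, so $(u_n)$ is bounded in $H^\mu_r(\R^N)$. This is where $\gamma_0$ enters, and I expect it to be the main delicate point: the negative part produces an $L^2$ term that must be absorbed by the $\beta$ term, not only by the seminorms.

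\textbf{Step 2 (extraction).} Since $H^\mu_r(\R^N)$ is a Hilbert space, a subsequence converges weakly to some $u$. By \eqref{aggcompatto}, $u_n\to u$ in $L^p$ for all $p\in(2,2^*_{s_\sharp})$ and a.e. Lemma~\ref{lemma:agg0547543ygf} gives convergence of the $\mu^-$ bilinear form against fixed $v$, and its limit coincides with $u$ because of the a.e. convergence. Next I would show
\[
\int_{\R^N}(I_\alpha*F(u_n))f(u_n)(u_n-u)\,dx\to0 .
\]
The difficulty is that $(F_2)$ allows the endpoint growths $2\frac{N+\alpha}{N}$ and $2\frac{N+\alpha}{N-2s_\sharp}$, which correspond to $L^2$ and $L^{2^*_{s_\sharp}}$, where there is no compactness. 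The first thing I would try is the splitting used by Moroz--Van Schaftingen in \cite{MV}. Using $(F_3)$, for every $\varepsilon>0$ write $|f(t)t|\le\varepsilon(|t|^{\frac{N+\alpha}N}+|t|^{\frac{N+\alpha}{N-2s_\sharp}})+C_\varepsilon|t|^{p}$ with an intermediate exponent $p$; this is where I expect the most care to be needed, since $(F_3)$ controls $F$ and must be transferred to $f(t)t$ through $(F_2)$. Then:
\begin{itemize}
\item HLS (Proposition~\ref{prop:HLS}) together with the bound from Lemma~\ref{regularity of f} controls $I_\alpha*F(u_n)$ uniformly in $L^2+L^{\frac{2N}{N+2s_\sharp}}$.
\item The $\varepsilon$ parts are uniformly small.
\item The intermediate part converges by the compact embedding.
\end{itemize}
The same argument gives $\int(I_\alpha*F(u_n))f(u_n)u\to\int(I_\alpha*F(u))f(u)u$.

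\textbf{Step 3 (strong convergence).} Test $\mathcal J'_\beta(u_n)$ with $u_n-u$; this tends to $0$ by \eqref{2-PSP} and boundedness. Since $u_n\rightharpoonup u$, one also has $\langle\mathcal J'_\beta(u),u_n-u\rangle\to0$. Subtracting, and using Step 2 for the nonlinear terms, we get
\[
\int_{[0,1]}[u_n-u]_s^2\,d\mu^+-\int_{[0,\bar s]}[u_n-u]_s^2\,d\mu^-+\beta\|u_n-u\|_2^2\to0 .
\]
Applying Proposition~\ref{assorb measure} to $u_n-u$ with $\gamma\le\gamma_0$ bounds this from below by $c\|u_n-u\|_\mu^2$. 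Hence $u_n\to u$ strongly in $H^\mu_r(\R^N)$.
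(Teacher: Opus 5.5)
Your Steps~1 and~3 are correct and match the paper. Boundedness comes from $(N+\alpha)\mathcal J_\beta-\mathcal P_\beta$ together with Proposition~\ref{assorb measure}, and the conclusion comes from coercivity of the quadratic form evaluated at $u_n-u$. Testing $\mathcal J'_\beta(u_n)-\mathcal J'_\beta(u)$ against $u_n-u$ is even slightly cleaner than the paper's separate tests with $u_n$ and $u_0$. The $\mu^-$-form is bounded on $H^\mu_r(\R^N)$ by Proposition~\ref{assorb measure}, so weak convergence alone removes the cross terms and Lemma~\ref{lemma:agg0547543ygf} is not needed.

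The gap is the key estimate of Step~2. The inequality $|f(t)t|\le\varepsilon\big(|t|^{\frac{N+\alpha}{N}}+|t|^{\frac{N+\alpha}{N-2s_\sharp}}\big)+C_\varepsilon|t|^{p}$ does not follow from $(F_2)$--$(F_3)$, and it cannot be obtained by ``transferring'' $(F_3)$ through $(F_2)$. The reason is that $(F_3)$ only makes $F$ small, and smallness of $F$ says nothing pointwise about $tF'(t)$. For instance, near $t=0$ take
\[
F(t)=|t|^{\frac{N+\alpha}{N}}\,\frac{\sin\big(\log^2|t|\big)}{|\log|t||}\qquad\text{for }0<|t|<\tfrac12 .
\]
Then $F(t)/|t|^{\frac{N+\alpha}N}\to0$ and $|tf(t)|\le C|t|^{\frac{N+\alpha}N}$, yet $tf(t)=-2|t|^{\frac{N+\alpha}N}\cos\big(\log^2|t|\big)+o\big(|t|^{\frac{N+\alpha}N}\big)$. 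The same oscillation can be placed at infinity. So your ``$\varepsilon$-parts'' need not be small, and the argument fails exactly at the endpoint exponents, where there is no compactness. This is the characteristic difficulty of Berestycki--Lions type hypotheses, and it is also why boundedness has to go through $\mathcal P_\beta$, which involves only $F$.

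The repair is to let $F$ carry all the compactness and use $f$ only through boundedness.

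\begin{itemize}
\item The $\varepsilon$-splitting is legitimate for $F$: $|F(t)|\le\varepsilon\big(|t|^{\frac{N+\alpha}{N}}+|t|^{\frac{N+\alpha}{N-2s_\sharp}}\big)+C_\varepsilon|t|^{\frac{(N+\alpha)p}{2N}}$ with $p\in(2,2^*_{s_\sharp})$. Here $(F_3)$ must be read for $t\to0$ and $|t|\to\infty$ on both sides, since $u_n$ may change sign.
\item Combined with \eqref{aggcompatto}, a.e.\ convergence and Vitali's theorem, this gives $F(u_n)\to F(u)$ in $L^{\frac{2N}{N+\alpha}}(\R^N)$. Proposition~\ref{prop:HLS} then gives $I_\alpha*F(u_n)\to I_\alpha*F(u)$ strongly in $L^{\frac{2N}{N-\alpha}}(\R^N)$. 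This is the relevant space, not $L^2+L^{\frac{2N}{N+2s_\sharp}}$.
\item For the other factor, $(F_2)$ gives $|f(t)|\le C\big(|t|^{\frac{\alpha}{N}}+|t|^{\frac{\alpha+2s_\sharp}{N-2s_\sharp}}\big)$, and H\"older yields
\[
\|f(u_n)(u_n-u)\|_{\frac{2N}{N+\alpha}}\le C\Big(\|u_n\|_2^{\frac{\alpha}{N}}\|u_n-u\|_2+\|u_n\|_{2^*_{s_\sharp}}^{\frac{\alpha+2s_\sharp}{N-2s_\sharp}}\|u_n-u\|_{2^*_{s_\sharp}}\Big),
\]
which is bounded. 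Since $f(u_n)(u_n-u)\to0$ a.e., it tends to $0$ weakly in $L^{\frac{2N}{N+\alpha}}(\R^N)$.
\end{itemize}

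Strong times weak convergence then gives $\int_{\R^N}(I_\alpha*F(u_n))f(u_n)(u_n-u)\,dx\to0$. This is the only nonlinear input your Step~3 needs, because the term with $f(u)$ is already contained in $\langle\mathcal J'_\beta(u),u_n-u\rangle\to0$. The same mechanism also yields \eqref{cxwe84fkdshgfheroiyt9854ihgewhek}, which the paper states without details.
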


\begin{proof}
Let~$b\in \mathbb{R}$ and~$u_n$ be a sequence in $H_r^\mu(\mathbb{R}^N)$ satisfying \eqref{1-PSP}-\eqref{3-PSP}. Using conditions~\eqref{1-PSP} and~\eqref{3-PSP}, we obtain that
  \begin{eqnarray*}
      && (N+\alpha)b+o(1)=(N+\alpha)\mathcal J_\beta(u_n)-\mathcal{P}_\beta(u_n)\\ &&\qquad
      = \frac{N+\alpha}{2}\int_{[0,1]} \|(-\Delta)^{\frac{s}{2}}u_n\|^2_{2}\, d\mu^+(s)-\frac{N+\alpha}{2}
      \int_{[0,\bar s]} \|(-\Delta)^{\frac{s}{2}}u_n\|^2_{2}\, d\mu^-(s)\\ &&\qquad\qquad
      + \frac{\beta(N+\alpha)}{2}\|u_n\|^2_{2}-\frac{N+\alpha}{2}\mathcal{D}(u_n)-\int_{[0,1]} \frac{N-2s}{2} \|(-\Delta)^{\frac{s}{2}}u_n \|^2_{2}\, d\mu^+(s)\\ &&\qquad\qquad
       +\int_{[0,1]} \frac{N-2s}{2}\|(-\Delta)^{\frac{s}{2}}u_n \|^2_{2}\, d\mu^-(s)\nonumber --\frac{\beta N}{2}\|u_n\|^2_{2} +\frac{N+\alpha}{2}\mathcal{D}(u_n)\\ &&\qquad
\geq \frac{N+\alpha}{2}\int_{[0,1]} \|(-\Delta)^{\frac{s}{2}}u_n\|^2_{2}\, d\mu^+(s)-\frac{N+\alpha}{2}\int_{[0,\bar s]} \|(-\Delta)^{\frac{s}{2}}u_n\|^2_{2}\, d\mu^-(s) \\&&\qquad\qquad -\int_{[0,1]} \frac{N-2s}{2} \|(-\Delta)^{\frac{s}{2}}u_n \|^2_{2}\, d\mu^+(s) +\frac{\alpha \beta}{2}\|u_n\|^2_{2}.\end{eqnarray*}
    
   We observe that, in light of Proposition~\ref{assorb measure}, 
$$        \int_{[0,\bar s]} \|(-\Delta)^{\frac{s}{2}}u_n \|^2_{2} \, d\mu^-(s) \leq  \gamma \left( 
        2 \mu^+([\bar s, 1]) \|u_n\|^2_2+ \int_{[0,1]} \|(-\Delta)^{\frac{s}{2}}u_n \|^2_{2}\, d\mu^+(s)\right),   $$
   and therefore
  \begin{eqnarray*}
 (N+\alpha)b+o(1)&\geq&\frac{N+\alpha}{2}(1-\gamma )\int_{[0,1]} \|(-\Delta)^{\frac{s}{2}}u_n\|^2_{2}\, d\mu^+(s)-(N+\alpha) \gamma \mu^+([\bar s, 1])
  \|u\|^2_2 \\ &&\qquad -\int_{[0,1]} \frac{N-2s}{2} \|(-\Delta)^{\frac{s}{2}}u_n \|^2_{2}\, d\mu^+(s) +\frac{\alpha \beta}{2}\|u_n\|^2_{2}\\&\geq&
\frac{N+\alpha}{2}(1-\gamma )\int_{[0,1]} \|(-\Delta)^{\frac{s}{2}}u_n\|^2_{2}\, d\mu^+(s)-(N+\alpha) \gamma \mu^+([\bar s, 1]) \|u\|^2_2\\ &&\qquad -\frac{N}{2} \int_{[0,1]} \|(-\Delta)^{\frac{s}{2}}u_n \|^2_{2}\, d\mu^+(s) +\frac{\alpha \beta}{2}\|u_n\|^2_{2}  \\
 &=&\left( \frac{\alpha}{2}(1-\gamma) -\frac{N}{2} \gamma\right)\int_{[0,1]} \|(-\Delta)^{\frac{s}{2}}u\|^2_{2}\, d\mu^+(s) +\left( \frac{\alpha \beta}{2}-(N+\alpha)\gamma\mu^+([\bar s, 1])\right)\|u_n\|^2_{2}.
    \end{eqnarray*} 
    
Accordingly, taking~$\gamma$ sufficiently small, possibly in dependence of~$N$, $\alpha$, $\beta$, $\mu^+$,
and~$\bar s$, we find that
 $$ (N+\alpha)b+o(1)
      \geq  \frac{\alpha}{4} \int_{[0,1]} \|(-\Delta)^{\frac{s}{2}}u_n\|^2_{2}\, d\mu^+(s)+ \frac{\alpha \beta}{4}\|u_n\|^2_{2}.
 $$
 
Hence, the sequence  $u_n$ is bounded in $\M$. Thus, there exists a subsequence, still denoted by~$u_n$, such that $u_n \rightharpoonup u_0$ in $ H_r^{\mu}(\mathbb{R}^N)$. 
By the compact embedding in~\eqref{aggcompatto}, we deduce that~$u_n \to u_0$ in~$
L^p(\R^N)$ for every~$p \in (2,2^*_{s_\sharp})$
and a.e. in~$\R^N$.

Thanks to the assumptions $(F_1)-(F_4)$ we also obtain that \begin{equation}\label{cxwe84fkdshgfheroiyt9854ihgewhek}\begin{split}&
    \lim_{n\to+\infty}  \int_{\mathbb{R}^N} (I_{\alpha} * F(u_n))f(u_n)u_0\, dx=  \int_{\mathbb{R}^N} (I_{\alpha} * F(u_0))f(u_0)u_0\, dx
 \\{\mbox{and }}\quad&  \lim_{n\to+\infty}
      \int_{\mathbb{R}^N} (I_{\alpha} * F(u_n))f(u_n)u_n\, dx =\int_{\mathbb{R}^N} (I_{\alpha} * F(u_0))f(u_0)u_0\, dx.
  \end{split}\end{equation}
  
Moreover,
we know by assumptions that~$\langle \mathcal J'_\beta(u_n), u_n\rangle \to 0$ and~$\langle \mathcal J'_\beta(u_n), u_0\rangle \to 0$
as~$n\to+\infty$,
or equivalently \begin{align*}
   &  \lim_{n\to+\infty}\int_{[0,1]}\|(-\Delta)^{s/2}u_n\|^2_2\, d\mu^+(s)
   -\int_{[0,1]}\|(-\Delta)^{s/2}u_n\|^2_2\, d\mu^-(s)+\beta\|u_n\|_2^2\\ &\qquad
   - \int_{\mathbb{R}^N} (I_{\alpha} * F(u_n))f(u_n)u_n\, dx= 0
\end{align*} and  \begin{align*}
    &\lim_{n\to+\infty} \int_{[0,1]}\iint_{\R^{2N}}C_{N,s}\frac{(u_n(x)-u_n(y))(u_0(x)-u_0(y))}{|x-y|^{N+2s}}dx\,dy\,d\mu^+(s)\\ &\qquad-\int_{[0,1]}\iint_{\R^{2N}}C_{N,s}\frac{(u_n(x)-u_n(y))(u_0(x)-u_0(y))}{|x-y|^{N+2s}}dx\,dy\,d\mu^-(s)\\ &\qquad+ \beta \int_{\R^N} u_n u_0\, dx-\int_{\mathbb{R}^N} (I_{\alpha} * F(u_n))f(u_n)u_0\, dx=0.
\end{align*}

As a result, we find that~$\langle\mathcal{J}_\beta(u_n), u_n-u_0\rangle\to 0$, hence, as~$n\to+\infty$,
\begin{align*}
&\int_{[0,1]}\|(-\Delta)^{s/2}u_n\|^2_2\, d\mu^+(s)-\int_{[0,1]}\|(-\Delta)^{s/2}u_n\|^2_2\, d\mu^-(s)+\beta \|u_n\|_2^2- \int_{\mathbb{R}^N} (I_{\alpha} * F(u_n))f(u_n)u_n\, dx\\ &\qquad
-\int_{[0,1]}\iint_{\R^{2N}}C_{N,s}\frac{(u_n(x)-u_n(y))(u_0(x)-u_0(y))}{|x-y|^{N+2s}}dx\,dy\,d\mu^+(s)\\ &\qquad
+\int_{[0,1]}\iint_{\R^{2N}}C_{N,s}\frac{(u_n(x)-u_n(y))(u_0(x)-u_0(y))}{|x-y|^{N+2s}}dx\,dy\,d\mu^-(s)\\ &\qquad
-\beta  \int_{\R^N} u_n u\, dx+\int_{\mathbb{R}^N} (I_{\alpha} * F(u_n))f(u_n)u_0\, dx=o(1).
\end{align*}
Namely,
\begin{equation}\label{bcwr83297tfiuwagfiuASDFG}\begin{split}
   & o(1)+\int_{\mathbb{R}^N} (I_{\alpha} * F(u_n))f(u_n)u_n\, dx-\int_{\mathbb{R}^N} (I_{\alpha} * F(u_n))f(u_n)u_0\, dx\\ 
   &\quad=\int_{[0,1]} \iint_{\R^{2N}}C_{N,s}\frac{(u_n(x)-u_n(y))\big((u_n(x)-u_n(y))-(u_0(x)-u_0(y))\big)}{|x-y|^{N+2s}}dx\,dy\,d\mu^+(s)\\ &\qquad+\int_{[0,1]}\iint_{\R^{2N}}C_{N,s}\frac{(u_n(x)-u_n(y))\big((u_0(x)-u_0(y))-(u_n(x)-u_n(y))\big)}{|x-y|^{N+2s}}dx\,dy\,d\mu^-(s)
   \\ &\qquad+\beta  \int_{\R^N} u_n(u_n- u_0)\, dx.\end{split}\end{equation}

We now set
\begin{eqnarray*}
A&:=&\int_{[0,1]}\iint_{\R^{2N}}C_{N,s}\frac{(u_n(x)-u_n(y))\big((u_n(x)-u_n(y))-(u_0(x)-u_0(y))\big)}{|x-y|^{N+2s}}dx\,dy\,d\mu^+(s)
\\&&\qquad+\beta  \int_{\R^N} u_n(u_n- u_0)\, dx
\\{\mbox{and }} \quad
B&:=&\int_{[0,1]}\iint_{\R^{2N}}C_{N,s}\frac{(u_n(x)-u_n(y))\big((u_0(x)-u_0(y))-(u_n(x)-u_n(y))\big)}{|x-y|^{N+2s}}dx\,dy\,d\mu^-(s)
\end{eqnarray*}

Concerning the term~$A$, we have that
\begin{equation*}\begin{split}
       & A=\int_{[0,1]}\iint_{\R^{2N}}
       C_{N,s} \frac{|(u_n(x)-u_n(y))-(u_0(x)-u_0(y))|^2}{|x-y|^{N+2s}}\, d\mu^+(s)\\ &\qquad
       + \int_{[0,1]} \iint_{\R^{2N}} C_{N,s}\frac{(u_0(x)-u_0(y))\big((u_n(x)-u_n(y))-(u_0(x)-u_0(y))\big)}{|x-y|^{N+2s}}\, dx\, dy\, d\mu^+(s)\\ & \qquad +\beta\int_{\R^N}|u_n-u_0|^2\, dx+\beta\int_{\R^N} u_0(u_n-u_0)\, dx .
   \end{split}\end{equation*}
   Similarly, for the term~$B$,
\begin{align*}
&B= - \int_{[0,1]}\iint_{\R^{2N}}C_{N,s} \frac{|(u_n(x)-u_n(y))-(u_0(x)-u_0(y))|^2}{|x-y|^{N+2s}}\, dx\, dy\, d\mu^-(s)\\ &\qquad+ 
\int_{[0,1]} \iint_{\R^{2N}} C_{N,s}\frac{(u_0(x)-u_0(y))\big((u_0(x)-u_0(y))-(u_n(x)-u_n(y))\big)}{|x-y|^{N+2s}}\, dx\, dy\, d\mu^-(s).  
\end{align*}

Now we use Proposition \ref{assorb measure} to see that
\begin{equation*}\begin{split}
&B\geq-\gamma \int_{[0,1]}\iint_{\R^{2N}}
C_{N,s} \frac{|(u_n(x)-u_n(y))-(u_0(x)-u_0(y))|^2}{|x-y|^{N-2s}}\, dx\, dy\, d\mu^+(s)\\&\qquad-2\mu^+([\bar{s},1])
\gamma \int_{\R^N}|u_n-u_0|^2\, dx\\ 
&\qquad+ \int_{[0,1]} \iint_{\R^{2N}} C_{N,s}\frac{(u_0(x)-u_0(y))\big((u_0(x)-u_0(y))-(u_n(x)-u_n(y))\big)}{|x-y|^{N-2s}}\, dx\, dy\, d\mu^-(s).\end{split}\end{equation*}

Gathering these pieces of information and using them into~\eqref{bcwr83297tfiuwagfiuASDFG}, we thus conclude that
\begin{equation*}\begin{split}
  & o(1)+\int_{\mathbb{R}^N} (I_{\alpha} * F(u_n))f(u_n)u_n\, dx-\int_{\mathbb{R}^N} (I_{\alpha} * F(u_n))f(u_n)u_0\, dx\\ 
   &\quad=A+B\\
   &\quad\geq (1-\gamma)\int_{[0,1]}\iint_{\R^{2N}}
       C_{N,s} \frac{|(u_n(x)-u_n(y))-(u_0(x)-u_0(y))|^2}{|x-y|^{N+2s}}\, dx\, dy\, d\mu^+(s)\\ &\qquad
       + \int_{[0,1]} \iint_{\R^{2N}} C_{N,s}\frac{(u_0(x)-u_0(y))\big((u_n(x)-u_n(y))-(u_0(x)-u_0(y))\big)}{|x-y|^{N+2s}}\, dx\, dy\, d\mu^+(s)\\ & \qquad +\big(\beta-2\mu^+([\bar{s},1])
\gamma \big)\int_{\R^N}|u_n-u_0|^2\, dx
 +\beta\int_{\R^N} u_0(u_n-u_0)\, dx
\\ 
&\qquad+ \int_{[0,1]} \iint_{\R^{2N}} C_{N,s}\frac{(u_0(x)-u_0(y))\big((u_0(x)-u_0(y))-(u_n(x)-u_n(y))\big)}{|x-y|^{N-2s}}\, dx\, dy\, d\mu^-(s).\end{split}
\end{equation*}

Furthermore, thanks to Lemma~\ref{lemma:agg0547543ygf}, 
\begin{equation}\label{weak convergence mu meno}
  \lim_{n\to+\infty}  \int_{[0,1]}\int_{\R^{2N}} C_{N,s}\frac{(u_0(x)-u_0(y))\big((u_0(x)-u_0(y))-(u_n(x)-u_n(y))\big)}{|x-y|^{N-2s}}\, dx\, dy\, d\mu^-(s)=0.
\end{equation}
From this and~\eqref{cxwe84fkdshgfheroiyt9854ihgewhek}, we thus gather that
\begin{equation*}\begin{split}
  &0=\lim_{n\to+\infty} (1-\gamma)\int_{[0,1]}\iint_{\R^{2N}}
       C_{N,s} \frac{|(u_n(x)-u_n(y))-(u_0(x)-u_0(y))|^2}{|x-y|^{N+2s}}\, dx\, dy\, d\mu^+(s)\\ &\qquad\qquad\qquad
      +\big(\beta-2\mu^+([\bar{s},1])
\gamma \big)\int_{\R^N}|u_n-u_0|^2\, dx
.\end{split}
\end{equation*}
As a result, by possibly taking~$\gamma$ smaller,
we obtain that
\begin{equation*}
0\ge  \lim_{n\to+\infty}\int_{[0,1]}\iint_{\R^{2N}}
       C_{N,s} \frac{|(u_n(x)-u_n(y))-(u_0(x)-u_0(y))|^2}{|x-y|^{N+2s}}\, dx\, dy\, d\mu^+(s)      +\int_{\R^N}|u_n-u_0|^2\, dx,
\end{equation*} namely~$\|u_n-u_0\|_\mu\to 0$ as~$n\to+\infty$, as desired.
\end{proof}

\subsection{Proof of Theorem~\ref{MP solution}}

We can now complete the proof of Theorem \ref{MP solution}.
For this, we also recall the following abstract deformation lemma, whose proof can be performed as in~\cite[Proposition~4.5]{HT}, (see also~\cite[Proposition~3.1 and Corollary~4.3]{IT}.

 \begin{lemma}\label{defomation Lemma}For all~$b\in\R$, let 
      \begin{equation*} 
      K_b:=\big\{ u \in H_r^\mu (\mathbb{R}^N\, | \, \mathcal J_\beta(u)=b\,\,, \mathcal J'_\beta(u)=0\,\,, \mathcal{P}_\beta(u)=0\big\}
  \end{equation*}be the set of critical points of $\mathcal J_\beta $ at level~$b$
  satisfying the Pohozaev identity.
  
  Then, for any $\bar \varepsilon >0$ and $U$ open neighborhood of $K_b$, there exist~$\varepsilon \in (0, \bar \varepsilon)$  and a continuous map~$\eta :[0,1] \times H_r^\mu (\mathbb{R}^N) \to H_r^\mu(\mathbb{R}^N)$ such that \begin{enumerate}[label=(\roman*)]
      \item $\eta(0,u)=u$ for all~$u \in \M$;
      \item $\eta(t,u)=u$ for all~$ (t,u)\in[0,1] \times \{J_\beta \leq b - \varepsilon \}$;
      \item $J_\beta(\eta(t,u)) \leq J_\beta (u)$ for all~$ (t,u) \in [0,1] \times \M;$
      \item $\eta(1, \{J_\beta \leq b + \varepsilon \}\smallsetminus U) \subset \{J_\beta \leq b- \varepsilon \}$;
      \item $\eta (1, \{J_\beta \leq b+ \varepsilon \}\smallsetminus U) \subset \{J_\beta \leq b- \varepsilon\}\cup U$;
      \item if $K_b=\varnothing, $ then $ \eta(1, \{J_\beta \leq b + \varepsilon \}) \subset \{J_\beta \leq b- \varepsilon\}$.
  \end{enumerate}
 \end{lemma}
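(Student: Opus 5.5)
The argument follows the classical deformation lemma of \cite[Proposition~4.5]{HT} and \cite{IT}, with the Palais--Smale--Pohozaev condition of Proposition~\ref{palais-smale-prop} replacing the standard compactness assumption. We work on the product space $E:=\mathbb{R}\times H_r^\mu(\R^N)$ with the augmented functional
$$\widetilde{\mathcal J}(\theta,u):=\mathcal J_\beta\big(u(e^{-\theta}\,\cdot)\big),$$
whose $\theta$-derivative at $\theta=0$ equals $\mathcal P_\beta(u)$. This follows from the scaling laws: the seminorm term scales like $e^{(N-2s)\theta}$, the mass term like $e^{N\theta}$, and $\mathcal D$ like $e^{(N+\alpha)\theta}$; these are the same laws used in the proof of Lemma~\ref{bjhkewr793798436780jfs7y6t5relk}. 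The map $\widetilde{\mathcal J}$ is $C^1$, since the dilation is a continuous group action on $H_r^\mu$. At points $(0,u)$, the quantity $\|\widetilde{\mathcal J}'(0,u)\|$ controls both $\|\mathcal J_\beta'(u)\|$ and $|\mathcal P_\beta(u)|$.

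\textbf{Step 1 (uniform gradient bound).} We first claim the following. For every $\delta>0$ there is $\sigma>0$ such that
$$|\mathcal J_\beta(u)-b|\le\sigma \quad\text{and}\quad \operatorname{dist}(u,K_b)\ge\delta/2$$
together imply
$$\|\mathcal J'_\beta(u)\|+|\mathcal P_\beta(u)|\ge\sigma .$$
We argue by contradiction. If the claim fails, we obtain a sequence satisfying \eqref{1-PSP}--\eqref{3-PSP}. Proposition~\ref{palais-smale-prop} then yields a convergent subsequence. Its limit lies in $K_b$ by continuity of $\mathcal J_\beta$, $\mathcal J'_\beta$ and $\mathcal P_\beta$, which contradicts the distance bound. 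The same argument shows that $K_b$ is compact, so we may take $U\supset N_\delta(K_b)$ for small $\delta$.

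\textbf{Step 2 (flow and its restriction).} Build a locally Lipschitz pseudo-gradient vector field $V$ for $\widetilde{\mathcal J}$ on $E$, away from its critical set. Multiply it by a cutoff that vanishes on $\{\mathcal J_\beta\le b-2\varepsilon\}$ and in $N_{\delta/2}(K_b)$, and solve the resulting flow $\widetilde\eta(t,(0,u))=(\theta(t),w(t))$. Then set
$$\eta(t,u):=w(t)\big(e^{-\theta(t)}\,\cdot\big).$$
This map is continuous, satisfies $\mathcal J_\beta(\eta(t,u))=\widetilde{\mathcal J}(\widetilde\eta(t,(0,u)))$, and this value is nonincreasing in $t$. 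Hence (i)--(iii) hold.

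\textbf{Step 3 (quantitative descent).} The fact that $\widetilde{\mathcal J}$ is invariant under the joint translation $(\theta,u)\mapsto(\theta+\tau,u(e^{-\tau}\cdot))$ lets us transfer the bound of Step 1 to all points $(\theta,w)$. Along the flow, for points outside $N_{\delta/2}$ of the orbit of $K_b$, the energy decreases at rate at least $\sigma^2/C$. Choose $\varepsilon<\min\{\bar\varepsilon,\sigma\delta/8\}$. A trajectory starting in $\{\mathcal J_\beta\le b+\varepsilon\}\setminus U$ either drops below $b-\varepsilon$ by time $1$, or travels a distance of at least $\delta/2$. The second alternative costs energy at least $\sigma\delta/4>2\varepsilon$, which is again a contradiction. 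This gives (iv) and (v). Statement (vi) is the case $\delta=\infty$: when $K_b=\varnothing$, Step 1 holds on the whole strip around level $b$.

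\textbf{Main obstacle.} The delicate point is Step 1 transported to the augmented space. Proposition~\ref{palais-smale-prop} is formulated only at $\theta=0$, and we need the lower bound uniformly for $\theta$ in a neighbourhood reachable in unit time. Two facts should handle this. First, the flow speed is bounded by $1$, so $|\theta(t)|\le1$. Second, on $[-1,1]$ the rescaling $u\mapsto u(e^{-\theta}\cdot)$ is bi-Lipschitz on $H_r^\mu$, with constants depending only on $N$. With these, every estimate can be pulled back to $\theta=0$, where Proposition~\ref{palais-smale-prop} applies.
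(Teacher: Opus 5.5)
Your route is the argument of \cite[Proposition~4.5]{HT} that the paper cites instead of writing out: a deformation flow for $\widetilde{\mathcal J}(\theta,u)=\mathcal J_\beta(u(e^{-\theta}\cdot))$ on $\R\times H^\mu_r(\R^N)$, with Proposition~\ref{palais-smale-prop} as the compactness input, pushed back by $\Phi(\theta,w):=w(e^{-\theta}\cdot)$. Step~1 is fine. The gap is in your resolution of the ``main obstacle''.

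\textbf{What bi-Lipschitz gives.} Bi-Lipschitz continuity of $w\mapsto\Phi(\theta,w)$ for $|\theta|\le1$ does transfer the gradient bound, since $\partial_\theta\widetilde{\mathcal J}(\theta,w)=\mathcal P_\beta(\Phi(\theta,w))$ and $\|\partial_w\widetilde{\mathcal J}(\theta,w)\|\ge e^{-N/2}\|\mathcal J_\beta'(\Phi(\theta,w))\|$. It says nothing about the $\theta$-direction, however. For general $w$, the map $\theta\mapsto w(e^{-\theta}\cdot)$ is continuous but not Lipschitz into $H^\mu_r(\R^N)$, because its generator $-x\cdot\nabla$ is unbounded. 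Two problems follow.
\begin{enumerate}
\item A cutoff built from $\operatorname{dist}(\Phi(\theta,w),K_b)$ need not be locally Lipschitz on $\R\times H^\mu_r(\R^N)$, so your ODE is not known to be well posed.
\item Suppose instead you use product-metric neighbourhoods of $\widetilde K_b:=\Phi^{-1}(K_b)=\{(\theta,k(e^{\theta}\cdot)):\ \theta\in\R,\ k\in K_b\}$, as Step~3 implicitly does. Then bi-Lipschitz only yields $\Phi^{-1}(N_{c\rho}(K_b))\cap\{|\theta|\le1\}\subset N_\rho(\widetilde K_b)$ with $c=e^{-N/2}$, that is, the gradient bound off $N_\rho(\widetilde K_b)$.
\end{enumerate}

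\textbf{What is missing.} Your energy count needs the converse: every $u\notin U$ must satisfy $\operatorname{dist}((0,u),\widetilde K_b)\ge\rho$ for a uniform $\rho>0$. Otherwise the cutoff may freeze $u$ at a level in $(b-\varepsilon,b+\varepsilon]$, violating (iv). This requires
$$\omega(\tau):=\sup\big\{\|k(e^{\theta}\cdot)-k\|_\mu:\ k\in K_b,\ |\theta|\le\tau\big\}\to0 \quad\text{as } \tau\to0,$$
which holds because $K_b$ is compact and the dilations are strongly continuous with $\|\Phi(\theta,\cdot)\|\le e^{N|\theta|/2}$. If $(0,u)$ is $\rho$-close to $(\theta',k(e^{\theta'}\cdot))$, then $\|u-k\|_\mu<\rho+\omega(\rho)$, so it suffices to take $\rho+\omega(\rho)<\delta$ with $N_\delta(K_b)\subset U$.

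With this ingredient, Step~3 runs with $\rho/2$ in place of $\delta/2$. You also need $\varepsilon$ below half the energy drop guaranteed in unit time away from $\widetilde K_b$. The bound $\varepsilon<\sigma\delta/8$ alone does not handle trajectories that never approach $\widetilde K_b$; as written, your dichotomy ``drops below $b-\varepsilon$ or travels $\delta/2$'' is not justified.

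\textbf{Item (ii).} Your cutoff gives $\eta(t,u)=u$ only on $\{\mathcal J_\beta\le b-2\varepsilon\}$, not on $\{\mathcal J_\beta\le b-\varepsilon\}$ as (ii) literally reads. A locally Lipschitz flow cannot do better. If every point of $\{\mathcal J_\beta\le b-\varepsilon\}$ were a rest point, uniqueness would prevent trajectories starting outside that set from entering it in finite time, contradicting (iv). The standard formulation has $b-\bar\varepsilon$ in (ii), and that is the form actually used in the proof of Proposition~\ref{Pohozaev minima solution of problem}. Your construction gives it once $2\varepsilon\le\bar\varepsilon$.

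\textbf{Two small corrections.} The invariance of $\widetilde{\mathcal J}$ is $(\theta,w)\mapsto(\theta+\tau,w(e^{\tau}\cdot))$, not $w(e^{-\tau}\cdot)$. Also, the $C^1$ regularity of $\widetilde{\mathcal J}$ does not follow from continuity of the action, which is not differentiable in $\theta$. It follows from the explicit formula
$$\widetilde{\mathcal J}(\theta,u)=\tfrac12\int_{[0,1]}e^{(N-2s)\theta}[u]_s^2\,d\mu(s)+\tfrac\beta2 e^{N\theta}\|u\|_2^2-\tfrac12 e^{(N+\alpha)\theta}\mathcal D(u).$$
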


\begin{proof}[Proof of Theorem \ref{MP solution}]
We observe that the set~$K_{l(\beta)}$ is compact,
thanks to Proposition~\ref{palais-smale-prop}.
Moreover, in light of Lemma~\ref{bjhkewr793798436780jfs7y6t5relk0} and~\ref{bjhkewr793798436780jfs7y6t5relk},
we have that the Mountain Pass value~$l(\beta)$, defined in~\eqref{mplevel00}, is bounded by above and is strictly positive. In particular, this gives that~$0 \notin K_{l(\beta)}$.
    
Thus, 
by applying the deformation result in Lemma~\ref{defomation Lemma} at level~$b := l(\beta) > 0$, we obtain the existence of a Mountain Pass solution. Moreover, $u \in K_{l(\beta)}$ by construction, thus $u \not\equiv 0$ and~$\mathcal{P}_\beta(u) = 0$. \end{proof}

\subsection{Proof of Theorem \ref{p=l}}
We prove now that the solution obtained in Theorem~\ref{MP solution}
actually minimizes the energy among all the solutions satisfying the Pohozaev identity.

\begin{proof}[Proof of Theorem \ref{p=l}]
We observe that the claim of Theorem \ref{p=l} will follow once we establish that~$l(\beta)=p(\beta).$

Moreover, since the solution found in Theorem~\ref{MP solution}
lies on the Pohozaev manifold, we have that
\begin{equation}\label{nbvc8urgewsiuytr83w7654}
p(\beta)\leq l(\beta).\end{equation}

We now claim that
\begin{equation}\label{bvcxwqr439765toliew} l(\beta)\leq  p(\beta).\end{equation}
For this, let $u \in \M \smallsetminus \{0\}$ be such that $\mathcal{P}
    _\beta(u) = 0$, namely, recalling~\eqref{Pohozaev functional},
    $$ 
    \int_{[0,1]} \frac{N-2s}{2} \|(-\Delta)^{\frac{s}{2}}u \|^2_2\, d\mu^+(s)-\int_{[0,1]} \frac{N-2s}{2} \|(-\Delta)^{\frac{s}{2}}u \|^2_2\, d\mu^-(s)+\frac{\beta N}{2}\|u\|^2_{2}  =\frac{N+\alpha}{2}\mathcal{D}(u)
    .$$
  Thus, exploiting Proposition~\ref{assorb measureBIS},
    \begin{equation}\label{bdr97t6hliksat804w8765487654}
    \frac{N+\alpha}{2} \mathcal{D}(u)\ge (1-\gamma\bar c)
    \int_{[0,1]} \frac{N-2s}{2} \|(-\Delta)^{\frac{s}{2}}u \|^2_2\, d\mu^+(s)+
\left(\frac{\beta N}{2}-\gamma \bar{c}\right)\|u\|^2_{2} 
     >0,
    \end{equation}provided that $\gamma$ is taken small enough, possibly in dependence of~$N$, $\beta$, $\mu^+$, and~$\bar s$.
    
    We now consider the function~$\psi(t)\in\H$ defined as
\begin{equation}\label{fufhgfbvn4935} \psi(t):=\begin{cases}
u\left(\frac{\cdot}{t}\right), &{\mbox{ if }} t>0,\\
0, &{\mbox{ if }} \tau=0\end{cases}\end{equation}
  and we know that
  \begin{equation}\label{nbvctrh6547uy65eu650}
  {\mbox{$J_\beta (\psi(t))<0$ for large values of $t\ge 1$}}\end{equation} 
  (indeed, one can use~\eqref{bncxiwi5uy98430thewsd} with~$u$ in place of~$w$, and notice that we are allowed in lieu of~\eqref{bdr97t6hliksat804w8765487654}).
    
Moreover, we claim that
\begin{equation}\label{nbvctrh6547uy65eu65}
J_\beta (\psi(t)) \text{ attains its maximum at } t = 1.
\end{equation}
For this, let $\bar{t} \in (0,+\infty)$ be a critical point of the map $t \mapsto \mathcal{J}_\beta(u(\frac{\cdot}{t}))$, that is
\begin{align*}
    \frac{\partial}{\partial t} \mathcal{J}_\beta(\psi(t))|_{\bar t} &= 
    \int_{[0,1]} \frac{N-2s}{2} \bar t^{N-2s-1} \|(-\Delta)^{\frac{s}{2}}u \|^2_2\, d\mu^+(s) \\
    &\qquad- \int_{[0,1]} \frac{N-2s}{2} \bar t^{N-2s-1} \|(-\Delta)^{\frac{s}{2}}u \|^2_2\, d\mu^-(s) \\
    &\qquad+ \frac{\bar t^{N-1}\beta N}{2}\|u\|^2_{2}  - \frac{N+\alpha}{2}\bar t^{N+\alpha -1}\mathcal{D}(u)\\&=0.
\end{align*}

We will show that
\begin{equation}\label{csdmtgui4379t4ot}
\frac{\partial^2}{\partial t^2} \mathcal{J}_\beta \left(u\left(\frac{\cdot}{\bar{t}}\right)\right) < 0.\end{equation}
Indeed,
\begin{equation*}\begin{split}
 \frac{\partial^2}{\partial t^2} \mathcal{J}_\beta (u(\tfrac{\cdot}{t}))|_{\bar t} &= \frac{\partial^2}{\partial t^2} \mathcal{J}_\beta (u(\tfrac{\cdot}{t}))|_{\bar t} - \frac{(N-1+\alpha)}{\bar{t}} \frac{\partial}{\partial t}\mathcal{J}_\beta(u(\tfrac{\cdot}{t}))|_{\bar t} \\
    &= \int_{[0,1]} \frac{N-2s}{2}(N-1-2s) \bar{t}^{N-2s-2} \|(-\Delta)^{\frac{s}{2}}u \|^2_2\, d\mu^+(s) \\
&\qquad- \int_{[0,1]} \frac{N-2s}{2}(N-1-2s) \bar{t}^{N-2s-2} \|(-\Delta)^{\frac{s}{2}}u \|^2_2\, d\mu^-(s) \\
&\qquad+ (N-1)\frac{\bar{t}^{N-2}\beta N}{2}\|u\|^2_{2}  - (N-1+\alpha)\frac{N+\alpha}{2}\bar{t}^{N+\alpha -2}\mathcal{D}(u) \\
&\qquad- \int_{[0,1]} \frac{N-2s}{2}(N-1+\alpha) \bar{t}^{N-2s-2} \|(-\Delta)^{\frac{s}{2}}u \|^2_2\, d\mu^+(s) \\
   &\qquad+ \int_{[0,1]} \frac{N-2s}{2}(N-1+\alpha) \bar{t}^{N-2s-2} \|(-\Delta)^{\frac{s}{2}}u \|^2_2\, d\mu^-(s) \\
&\qquad- (N-1+\alpha)\frac{\bar{t}^{N-2}\beta N}{2}\|u\|^2_{2}  + (N-1+\alpha)\frac{N+\alpha}{2}\bar{t}^{N+\alpha -2}\mathcal{D}(u) \\
&= \int_{[0,1]} \frac{N-2s}{2}(-2s-\alpha) \bar{t}^{N-2s-2} \|(-\Delta)^{\frac{s}{2}}u \|^2_2\, d\mu^+(s) \\
    &\qquad+ \int_{[0,1]} \frac{N-2s}{2}(2s+\alpha) \bar{t}^{N-2s-2} \|(-\Delta)^{\frac{s}{2}}u \|^2_2\, d\mu^-(s) - \alpha\frac{\bar{t}^{N-2}\beta N}{2}\|u\|^2_{2}.
\end{split}\end{equation*} 
Thus,
exploiting Proposition~\ref{assorb measureBIS}, we obtain that
\begin{equation*}
\begin{split}
&    \frac{\partial^2}{\partial t^2} \mathcal{J}_\beta (u(\tfrac{\cdot}{t}))|_{\bar t}\\ &\qquad\leq -(1-\gamma) \int_{[0,1]} \frac{N-2s}{2}(2s+\alpha) \bar{t}^{N-2s-2} \|(-\Delta)^{\frac{s}{2}}u \|^2_2\, d\mu^+(s)- (\alpha - \bar{c} \gamma)\frac{\bar{t}^{N-2}\beta N}{2}\|u\|^2_{2} < 0,
\end{split}\end{equation*}
provided that $\gamma$ is sufficiently small, possibly depending on~$\alpha$, $\mu^+$, and $\bar{s}$. This completes the proof of~\eqref{csdmtgui4379t4ot}.

Hence, from~\eqref{csdmtgui4379t4ot} we infer that~$\bar{t}$ is a local maximum.

Furthermore, we have that
\begin{align*}
    \lim\limits_{t\to 0^+} \frac{\mathcal{J}_\beta(u(\frac{\cdot}{t}))}{t^{N-1}} > 0 \qquad \text{and} \qquad \lim\limits_{t \to +\infty} \mathcal{J}_\beta(u(\tfrac{\cdot}{t})) = -\infty.
\end{align*}
These considerations
show that the map $t \mapsto \mathcal{J}_{\beta}(u(\frac{\cdot}{t}))$ has a unique local maximum in~$\bar t$.
In particular, since~$u \in \M$ is such that $\mathcal{P}_\beta(u)=0$, then~$\bar t=1$, and the claim in~\eqref{nbvctrh6547uy65eu65} is established.
 
 After a suitable rescaling, we have that~$\psi \in \Gamma_\beta$ (where~$\Gamma_\beta$ is defined in~\eqref{cjkwhoufguo7654}), and thus, exploiting~\eqref{nbvctrh6547uy65eu65},
 \begin{equation}\label{max over parametrizations}
    \mathcal J_\beta(u)=\max\limits_{t \in [0,1]} \mathcal J_\beta(\psi(t))\geq l(\beta).
\end{equation}
Taking the infimum in \eqref{max over parametrizations} over all the functions~$u \in \M \smallsetminus \{0\}$
such that $\mathcal{P}_\beta(u) = 0$,
we get~\eqref{bvcxwqr439765toliew}, as desired.

From~\eqref{nbvc8urgewsiuytr83w7654} and~\eqref{bvcxwqr439765toliew} we obtain that~$p(\beta)= l(\beta)$, as desired.
\end{proof}

We now check that, in general, Pohozaev minima are solutions of~\eqref{problem}. 

\begin{proposition}\label{Pohozaev minima solution of problem}
There exists $\gamma_0 > 0$, depending on~$N$, $\alpha$, $\beta$, $\mu^+$, and~$\bar s$, such that for
all~$\gamma \in [0,\gamma_0]$ the following statements hold true.

Every Pohozaev minimum is a solution of \eqref{problem}, i.e., if
\begin{align*}
    \mathcal J_\beta(u)=p(\beta)\qquad \textit{and}\qquad\mathcal{P}_\beta(u)=0
\end{align*}then
\begin{align*}
    \mathcal J'_\beta(u)=0.
\end{align*} As a consequence,
$$
    p(\beta)=\inf \big\{ \mathcal J_\beta(u)\, | \, u\in H_r^\mu(\R^N)\smallsetminus \{0\},\,\, \mathcal{P}_\beta(u)=0,\,\, \mathcal J'_\beta(u)=0 \big\}.
$$\end{proposition}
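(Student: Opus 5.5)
We argue by contradiction through a deformation along the dilation path, the classical Jeanjean--Tanaka / Moroz--Van Schaftingen route adapted to $\mathcal{L}_\mu$. Let $u\in\M\setminus\{0\}$ with $\mathcal J_\beta(u)=p(\beta)$ and $\mathcal P_\beta(u)=0$, and suppose $\mathcal J'_\beta(u)\neq0$. Consider the dilation $\psi(t)=u(\cdot/t)$ from \eqref{fufhgfbvn4935}. By the proof of Theorem~\ref{p=l}, for $\gamma\le\gamma_0$ the map $t\mapsto\mathcal J_\beta(\psi(t))$ has a unique strict maximum at $t=1$, with value $p(\beta)$. Moreover $\mathcal J_\beta(\psi(t))\to-\infty$ as $t\to+\infty$, and $\psi$ is continuous in $\M$ on $(0,\infty)$ with $\psi(t)\to0$ as $t\to0^+$. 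Fix $T>1$ with $\mathcal J_\beta(\psi(T))<0$, and reparametrize to obtain a path $\tilde\psi\in\Gamma_\beta$. Since $t=1$ is a strict maximum, $\mathcal J_\beta(\psi(t))<p(\beta)$ for $t\neq1$.

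For the deformation, note that $\mathcal J'_\beta$ is continuous, so there exist $\delta>0$ and $\sigma>0$ with $\|\mathcal J'_\beta(v)\|\ge\sigma$ whenever $\|v-u\|_\mu\le 3\delta$. Choose a locally Lipschitz pseudo-gradient field $V$, supported in $B_{2\delta}(u)$ and equal to a pseudo-gradient on $B_\delta(u)$, and use it to build a flow $\eta$ that decreases $\mathcal J_\beta$, moves points by at most $\delta$, and strictly lowers $\mathcal J_\beta$ by some $\varepsilon>0$ on $B_\delta(u)\cap\{\mathcal J_\beta\ge p(\beta)-\varepsilon\}$. This is the standard quantitative deformation lemma, and no Palais--Smale condition is needed since we work only near one point. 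By continuity of $\psi$ at $t=1$, there is an interval $[1-\tau,1+\tau]$ on which $\psi(t)\in B_\delta(u)$. Outside this interval, $\max\mathcal J_\beta(\psi(t))\le p(\beta)-2\varepsilon$ once $\varepsilon$ is small. Then $\eta(1,\tilde\psi)$ is still in $\Gamma_\beta$, since the endpoints stay fixed because they lie at low levels, and $\max\mathcal J_\beta(\eta(1,\tilde\psi))<p(\beta)=l(\beta)$. This uses Theorem~\ref{p=l} and contradicts the definition of $l(\beta)$ in \eqref{mplevel00}.

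The main subtlety is an alternative I would avoid, namely showing that the Lagrange multiplier for the constraint $\mathcal P_\beta=0$ vanishes. The path argument above bypasses multipliers entirely. What remains is to verify carefully that $t\mapsto\psi(t)$ is continuous into $\M$. This should follow from dominated convergence on the Fourier side: $\widehat{\psi(t)}(\xi)=t^N\hat u(t\xi)$, and the scaling bounds $[\psi(t)]_s^2=t^{N-2s}[u]_s^2$ are uniform for $t$ in compact sets of $(0,\infty)$. One must also check that $\psi(t)\to0$ as $t\to0^+$, using $t^{N-2s}\le t^{N-2}$ for $t<1$.

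The "consequence" then follows directly. Every Pohozaev minimum is a critical point, so the infimum over $\{\mathcal P_\beta=0\}$ coincides with the infimum over its subset of critical points, because the minimizer $\bar u$ from Theorem~\ref{MP solution} belongs to that subset and attains $p(\beta)$ by Theorem~\ref{p=l}. The inequality in the other direction is trivial, since we are taking an infimum over a smaller set.
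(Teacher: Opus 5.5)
Your argument is correct. It has the same skeleton as the paper's proof: deform the dilation path $t\mapsto u(\cdot/t)$, whose energy has its unique maximum $p(\beta)$ at $t=1$, until it lies strictly below $p(\beta)$, and contradict $p(\beta)=l(\beta)$ from Theorem~\ref{p=l}.

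The real difference is the deformation tool.

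\textbf{The paper's route.} It applies the global Lemma~\ref{defomation Lemma} at level $p(\beta)$. It chooses $I=[1-\delta,1+\delta]$ and a neighborhood $U$ of $K_{p(\beta)}$ with $\psi(I)\cap U=\varnothing$. That lemma relies on the Pohozaev--Palais--Smale condition of Proposition~\ref{palais-smale-prop}, which is where part of the smallness of $\gamma$ is used, and its proof is taken from the literature.

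\textbf{Your route.} You use the local quantitative deformation lemma with $S=\overline{B_\delta(u)}$. This needs only $\mathcal J_\beta\in C^1$ and $\|\mathcal J_\beta'\|\ge\sigma$ on $B_{3\delta}(u)$. You fix $\varepsilon$ after $\delta$ and $\tau$, so that $8\varepsilon/\delta\le\sigma$ and $2\varepsilon\le p(\beta)-\max_{|t-1|\ge\tau}\mathcal J_\beta(\psi(t))$. No compactness of $K_{p(\beta)}$ is involved.

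\textbf{Comparison.} Your deformation step is more elementary and self-contained. The paper's is shorter because it reuses machinery already built for Theorem~\ref{MP solution}. Your proof still depends on the Palais--Smale condition indirectly, through the inequality $p(\beta)\le l(\beta)$. That inequality comes from the existence of the Mountain Pass solution on the Pohozaev set.

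\textbf{One technical point.} Since $\hat u$ is only in $L^2$, the pointwise convergence $\hat u(t\xi)\to\hat u(t_0\xi)$ is not available. Dominated convergence on the Fourier side therefore does not directly give continuity of $t\mapsto u(\cdot/t)$ in $H^\mu_r(\mathbb{R}^N)$. Prove it first for $C^\infty_0$ functions, then extend by density using the uniform bound $\|u(\cdot/t)\|_\mu^2\le\max\{t^{N-2},t^N\}\|u\|_\mu^2$ that you mention.
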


\begin{proof}
 Let $u$ be such that $\mathcal J_\beta(u) = p(\beta)$ and $\mathcal{P}_\beta(u) = 0$.
We consider the function~$\psi(t)$ defined in~\eqref{fufhgfbvn4935} and we
recall~\eqref{nbvctrh6547uy65eu650}
and~\eqref{nbvctrh6547uy65eu65}
to say that~$\mathcal J_\beta(\psi(t))$ is negative for large values of $t$
and its maximum value~$p(\beta)$ is attained at~$t = 1$, provided that~$\gamma$ is sufficiently small.

Now we assume by contradiction that $u$ is not critical. Let $I := [1 - \delta, 1 + \delta]$ be such that $\psi(I) \cap K_{p(\beta)} = \varnothing$
and set
\[
\bar{\varepsilon} := p(\beta) - \max_{t \notin I} \mathcal J_\beta(\psi(t)) > 0.
\]
Let also~$U$ be a neighborhood of $K_{p(\beta)}$ such that $\psi(I) \cap U = \varnothing$. Then, by Lemma \ref{defomation Lemma}, we have that
there exists a continuous map~$
\eta : [0, 1] \times H_r^\mu(\mathbb{R}^N) \to H_r^\mu(\mathbb{R}^N)$
at the level $p(\beta) \in \mathbb{R}$ satisfying the properties~$(i)-(vi)$. 

We now define the deformed path
\[
\widetilde{\psi}(t) := \eta(1, \psi(t))
\] and we observe that,
after a suitable rescaling, $\widetilde{\psi} \in \Gamma_\beta$, thanks to property~$(iii)$.

Now, if $t \notin I$, we have that~$\mathcal J_\beta(\psi(t)) < p(\beta) - \bar{\varepsilon}$, and thus by property $(ii)$ we obtain
\begin{equation}
\mathcal J_\beta(\widetilde{\psi}(t)) = \mathcal J_\beta(\psi(t)) < p(\beta) - \bar{\varepsilon}.
\label{eq:outsideI}
\end{equation} If instead~$t \in I$, since $\psi(t) \notin U$ and $\mathcal J_\beta(\psi(t)) \leq p(\beta) \leq p(\beta) + \bar\varepsilon$, by property $(iv)$ we obtain that
\begin{equation}
\mathcal J_\beta(\widetilde{\psi}(t)) \leq p(\beta) - \bar\varepsilon.
\label{eq:insideI}
\end{equation}

By combining~\eqref{eq:outsideI} and \eqref{eq:insideI}, we conclude that
\[
\max_{t \geq 0} \mathcal J_\beta(\widetilde{\psi}(t)) < p(\beta) = l(\beta),
\]
which provides the desired contradiction.
\end{proof}

In the case in which~$\mu^-\equiv0$,
we can show that there exists a solution of~\eqref{problem}
with constant sign.

\begin{proposition}\label{qwedfrgthbncmx98ty8yi4erwyt8}
Assume $\mu_-=0$ and  $F\not\equiv 0$ on $(0,+\infty)$ (i.e., $t_0$ in assumption $(F_4)$ can
be chosen positive). Then there exists a non-negative solution of \eqref{problem}, which is a minimum over all the non-negative functions on the Pohozaev manifold.
\end{proposition}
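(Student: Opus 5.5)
We argue with a modified nonlinearity. Set $\tilde f(t):=f(t)$ for $t\ge0$ and $\tilde f(t):=0$ for $t<0$, so that $\tilde F(t)=F(t^+)$. Then $\tilde f$ still satisfies $(F_1)$--$(F_3)$. Since $F\not\equiv0$ on $(0,+\infty)$, condition $(F_4)$ holds with some $t_0>0$. Because $\mu^-=0$, we may take $\gamma=0$, so the smallness hypotheses on $\gamma$ are automatic.

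Theorems~\ref{MP solution} and~\ref{p=l}, together with Proposition~\ref{Pohozaev minima solution of problem}, apply to the functional $\tilde{\mathcal J}_\beta$ built with $\tilde F$. They produce a nontrivial radial critical point $\tilde u$ of $\tilde{\mathcal J}_\beta$. This $\tilde u$ satisfies $\tilde{\mathcal P}_\beta(\tilde u)=0$ and $\tilde{\mathcal J}_\beta(\tilde u)=\tilde p(\beta)$, where $\tilde p(\beta)$ is the least energy on the modified Pohozaev set.

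The key step is to show that $\tilde u\ge0$. We test the weak formulation (Definition~\ref{weak solution}) with $v=\tilde u^-:=\max\{-\tilde u,0\}$. This function lies in $H^\mu_r(\R^N)$, since $t\mapsto t^-$ is $1$-Lipschitz and so does not increase any Gagliardo seminorm. Because $\tilde f(\tilde u)\tilde u^-=0$, the right-hand side reduces to $-\beta\int \tilde u\,\tilde u^-=\beta\|\tilde u^-\|_2^2$. On the left-hand side we use the pointwise inequality
$$(a-b)(a^--b^-)\le -(a^--b^-)^2,$$
which is checked on the cases according to the signs of $a$ and $b$. With it, the left-hand side is bounded above by $-\int_{[0,1]}[\tilde u^-]_s^2\,d\mu^+(s)$. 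It follows that $\|\tilde u^-\|_\mu^2\le0$, hence $\tilde u^-=0$. Therefore $\tilde u\ge0$, so $\tilde F(\tilde u)=F(\tilde u)$ and $\tilde f(\tilde u)=f(\tilde u)$, and $\tilde u$ is a non-negative solution of~\eqref{problem}.

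For the minimality claim, note that $\tilde{\mathcal J}_\beta=\mathcal J_\beta$ and $\tilde{\mathcal P}_\beta=\mathcal P_\beta$ on non-negative functions. Thus every non-negative $w\neq0$ with $\mathcal P_\beta(w)=0$ also lies on the modified Pohozaev set. This gives $\mathcal J_\beta(w)=\tilde{\mathcal J}_\beta(w)\ge\tilde p(\beta)=\mathcal J_\beta(\tilde u)$, and $\tilde u$ itself belongs to this class. The main point requiring care is to make sure the earlier results, in particular the Palais--Smale condition and the identity $l=p$, use only $(F_1)$--$(F_4)$ for the truncated nonlinearity, which they do. One must also verify the sign inequality in the mixed case where $a>0>b$: there the left-hand side is $(a-b)(-b^-)=-(a+b^-)b^-\le-(b^-)^2$, and the right-hand side is $-(0-b^-)^2=-(b^-)^2$, so the inequality holds.
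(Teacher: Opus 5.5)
Your proposal is correct and follows the paper's proof. You truncate $f$ to $\chi_{(0,+\infty)}f$, apply Theorems~\ref{MP solution} and~\ref{p=l} to the modified problem, and test with the negative part to get $\tilde u^-\equiv0$; your pointwise inequality $(a-b)(a^--b^-)\le-(a^--b^-)^2$ is a compact form of the paper's region-by-region sign computation, and your observation that $\tilde{\mathcal J}_\beta=\mathcal J_\beta$ and $\tilde{\mathcal P}_\beta=\mathcal P_\beta$ on non-negative functions spells out the minimality claim, which the paper leaves implicit.
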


\begin{proof}
Let $g := \chi_{(0,+\infty)} f$. We have that $g$ still satisfies assumptions $(F_1)-(F_4)$. Therefore, by Theorem~\ref{MP solution}, there exists a solution $u$ of the equation
\begin{equation}\label{problem3}
\mathcal{L}_\mu u + \beta u = (I_\alpha * G(u))g(u) \quad \text{in } \mathbb{R}^N,
\end{equation}
where
\[
G(t) := \int_0^t g(\tau) \, d\tau.
\] In light of Theorem \ref{p=l}, this solution is a minimum on the Pohozaev manifold associated with~\eqref{problem3}.

We now show that $u$ is non-negative. For this, we observe that, by~\eqref{seminorm_s},
\begin{align*}
&\int_{[0,1]}\|(-\Delta)^{s/2} |u|\|_{2}^2\, d\mu(s)\\ &= \int_{[0,1]}C_{N, s} \int_{\mathbb{R}^{2N}} \frac{(|u(x)| - |u(y)|)^2}{|x - y|^{N+2s}} \, dx\,dy\,d\mu(s) \\
&= \int_{[0,1]}C_{N, s} \int_{\mathbb{R}^{2N}} \frac{|u(x)|^2 + |u(y)|^2 - 2|u(x)||u(y)|}{|x - y|^{N+2s}} \, dx\,dy \, d\mu(s)\\
&\leq\int_{[0,1]} C_{N, s} \int_{\mathbb{R}^{2N}} \frac{u^2(x) + u^2(y) - 2u(x)u(y)}{|x - y|^{N+2s}} \, dx\,dy\, d\mu(s) \\
&= \int_{[0,1]}C_{N, s} \int_{\mathbb{R}^{2N}} \frac{(u(x) - u(y))^2}{|x - y|^{N+2s}} \, dx\,dy\,d\mu(s) \\
&= \int_{[0,1]}\|(-\Delta)^{s/2} u\|_{2}^2\,d\mu(s).
\end{align*}
Therefore, $|u| \in \M $  and, writing\footnote{Here we are using the notation~$u^+:=\max\{0,u\}$ and~$u^-:=\max\{0,-u\}$.} $u = u^+ - u^-$,  we have that
\[
u^- = \frac{|u| - u}{2} \in \M.
\]

As a consequence, $u^-$ can be used as a test  function in the weak formulation of~\eqref{problem3}, which gives that
\begin{equation*}\begin{split}
&\int_{[0,1]} \left( C_{N,s} \iint_{\mathbb{R}^{2N}}\frac{(u(x)-u(y))(u^-(x)-u^-(y))}{|x-y|^{N+2s}}\,dx\,dy\right)d\mu(s)
+\beta \int_{\mathbb{R}^N}u\, u^-\, dx \\ 
&\qquad= \int_{\mathbb{R}^N} (I_{\alpha} * G(u))g(u) u^-\, dx
.
\end{split}\end{equation*}
By the definition of $g$, we thus have that
\begin{equation}\label{bvncxwqu4980328658t3uyg987654}
\int_{[0,1]} \left( C_{N,s} \iint_{\mathbb{R}^{2N}}\frac{(u(x)-u(y))(u^-(x)-u^-(y))}{|x-y|^{N+2s}}dx\,dy\right)d\mu(s)
+\beta \int_{\mathbb{R}^N}u\, u^-\, dx =
0.
\end{equation}

We observe that
\begin{align*}
& \iint_{\mathbb{R}^{2N}} \frac{(u(x) - u(y))(u^-(x) - u^-(y))}{|x - y|^{N+2s}} \, dx\,dy
\\&= - \int_{\{u(x) \geq 0\} \times \{u(y) < 0\}} \frac{(u^+(x) + u^-(y))u^-(y)}{|x - y|^{N+2s}} \, dx\,dy \\
&\qquad -\int_{\{u(x) < 0\} \times \{u(y) \geq 0\}} \frac{(u^-(x) + u^+(y))u^-(x)}{|x - y|^{N+2s}} \, dx\,dy  \\
&\qquad -  \int_{\{u(x) < 0\} \times \{u(y) < 0\}} \frac{(u^-(x) - u^-(y))^2}{|x - y|^{N+2s}} \, dx\,dy\\
&\leq 0 .
\end{align*}
Using this information into~\eqref{bvncxwqu4980328658t3uyg987654}, we obtain that
$$ 0\leq \beta \int_{\mathbb{R}^N}u\, u^-\, dx=-\beta \int_{\mathbb{R}^N}( u^-)^2\, dx,$$
which implies that $u^- \equiv 0$, i.e., $u \geq 0$. Hence, $g(u) = f(u)$ and $G(u) = F(u)$, which entails that $u$ is a (non-negative) solution of equation \eqref{problem}.
\end{proof}

\section{Boundedness and proof of Theorem~\ref{THM:LINFTY}}\label{Regularity}

Aim of this section is to present the proof of Theorem~\ref{THM:LINFTY}.

\subsection{Some preliminary estimates}
We recall the following statements:

\begin{lemma}\cite[Lemma 4.6]{CGT}\label{nonlocal_estimate} Let $N\geq 2$, $s \in (0,1)$ and $\alpha \in (0,N)$. Let~$\theta \in (\frac{\alpha}{N}, 2-\frac{\alpha}{N})$ and $H,K \in L^{\frac{2N}{\alpha}}(\R^N)+ L^{\frac{2N}{\alpha+2s}}(\R^N)$.

Then, for every $\varepsilon >0$ there exists $C_{\varepsilon, \theta}>0$ such that, for every~$u \in H^{s}(\R^N)$,
    \begin{equation*}
\int_{\mathbb{R}^N} \left( I_\alpha * \left( H |u|^\theta \right) \right) K |u|^{2 - \theta} \, dx 
\leq \varepsilon^2 \| (-\Delta)^{{s}/{2}} u \|_2^2 + C_{\varepsilon, \theta} \|u\|_2^2.
\end{equation*} 
\end{lemma}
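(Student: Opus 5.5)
This is the Moroz--Van Schaftingen type estimate (cf.\ \cite{MV}), adapted to the fractional setting. The plan is to split $H$ and $K$ into a bounded-norm "good" part plus a small remainder, and then combine Hardy--Littlewood--Sobolev with interpolation between $L^2$ and $L^{2^*_s}$.

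\textbf{Reduction.} First I would write $H=H^*+H_*$ and $K=K^*+K_*$ with $H^*,K^*\in L^{2N/\alpha}(\R^N)$ and $H_*,K_*\in L^{2N/(\alpha+2s)}(\R^N)$. Since compactly supported bounded functions are dense in $L^{2N/(\alpha+2s)}$, for any $\delta>0$ the low-summability pieces $H_*,K_*$ can be further split. One part has $L^{2N/(\alpha+2s)}$-norm at most $\delta$. The other part is bounded with compact support, so it lies in $L^{2N/\alpha}$ and can be absorbed into $H^*,K^*$ at the cost of a $\delta$-dependent norm. By bilinearity the integral then splits into four terms, according to whether each factor is an $L^{2N/\alpha}$ piece or a small $L^{2N/(\alpha+2s)}$ piece.

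\textbf{Estimating each term.} I would apply Proposition~\ref{prop:HLS} with exponents $r,t$ satisfying $\frac1r+\frac1t=\frac{N+\alpha}{N}$, together with H\"older's inequality.

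For the term with two $L^{2N/\alpha}$ factors, I would bound $H^*|u|^\theta$ in $L^{r}$ with $\frac1r=\frac{\alpha}{2N}+\frac{\theta}{2}$. This gives $|u|^\theta\in L^{2/\theta}$, so $\||u|^\theta\|=\|u\|_2^\theta$. Similarly, $K^*|u|^{2-\theta}$ goes in $L^t$ with $\frac1t=\frac{\alpha}{2N}+\frac{2-\theta}{2}$. The exponents sum correctly, and the condition $\theta\in(\frac{\alpha}{N},2-\frac{\alpha}{N})$ is exactly what makes $r,t\in(1,\infty)$. The result is $C_\delta\|u\|_2^2$.

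For terms involving a small factor, say $H_*$ with norm at most $\delta$, I would place $|u|^\theta$ in $L^{2^*_s/\theta}$, giving $\delta\|u\|_{2^*_s}^\theta$. The $L^{2N/\alpha}$ partner factor takes $\|u\|_2^{2-\theta}$. The exponent arithmetic, $\frac{\alpha+2s}{2N}+\frac{\theta(N-2s)}{2N}+\frac{\alpha}{2N}+\frac{2-\theta}{2}=\frac{N+\alpha}{N}$, checks out. When both factors are small, the bound is $\delta^2\|u\|_{2^*_s}^2$.

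If some exponent leaves $(1,\infty)$ for extreme $\theta$, I would instead interpolate: put $|u|^\theta$ in an intermediate $L^p$ space with $p\in[2,2^*_s]$, which again produces a bound of the form $\|u\|_2^a\|u\|_{2^*_s}^b$ with $a+b=2$.

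\textbf{Conclusion.} Each mixed term has the form $C\,\delta\,\|u\|_2^{a}\|u\|_{2^*_s}^{b}$ with $a+b=2$. By \eqref{embendind homogeneous space} and Young's inequality, this is at most $\frac{\varepsilon^2}{4}\|(-\Delta)^{s/2}u\|_2^2+C_{\varepsilon,\delta}\|u\|_2^2$.

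The purely critical term $\delta^2\|u\|_{2^*_s}^2$ is handled by choosing $\delta$ small so that $C\delta^2\le\varepsilon^2/4$. This is the key point: smallness has to come from the splitting, because the critical term cannot be absorbed by Young's inequality alone. After fixing $\delta=\delta(\varepsilon)$, all remaining constants depend only on $\varepsilon,\theta$ (and on $H,K$), which gives the claim.

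The main obstacle is the bookkeeping of admissible exponents across the whole range of $\theta$ so that HLS applies in every term. I would first verify the endpoint cases $\theta$ near $\frac{\alpha}{N}$ and near $2-\frac{\alpha}{N}$ and use interpolation there.
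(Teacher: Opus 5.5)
The paper does not prove this lemma; it imports it from \cite[Lemma 4.6]{CGT}, whose proof is the Moroz--Van Schaftingen splitting argument you outline. Your treatment of the two pure terms is correct: $H^*$ with $K^*$ gives $\|u\|_2^2$, and $H_*$ with $K_*$ gives $\delta^2\|u\|_{2^*_s}^2$.

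The gap is in the mixed terms. The identity you say ``checks out'' is false:
\[
\frac{\alpha+2s}{2N}+\frac{\theta(N-2s)}{2N}+\frac{\alpha}{2N}+\frac{2-\theta}{2}=\frac{N+\alpha}{N}+\frac{s(1-\theta)}{N}.
\]
So Hardy--Littlewood--Sobolev cannot be applied with that choice unless $\theta=1$. This is not just a matter of constants. Replacing $u,H_*,K^*$ by $u(\lambda\,\cdot),H_*(\lambda\,\cdot),K^*(\lambda\,\cdot)$ multiplies the left-hand side by $\lambda^{-(N+\alpha)}$. It multiplies your proposed bound $\|H_*\|_{\frac{2N}{\alpha+2s}}\|K^*\|_{\frac{2N}{\alpha}}\|u\|_{2^*_s}^{\theta}\|u\|_2^{2-\theta}$ by $\lambda^{-(N+\alpha)-s(1-\theta)}$. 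Hence that inequality is false for $\theta\neq1$. Your fallback does not repair this either. It targets exponents leaving $(1,\infty)$, which is not the issue. Moreover, moving only $|u|^\theta$ while keeping $|u|^{2-\theta}$ in $L^{2/(2-\theta)}$ would force $|u|^\theta$ into $L^{p/\theta}$ with $p>2^*_s$ when $\theta<1$.

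Dilation invariance tells you the form the mixed terms must take: a multiple of $\|u\|_2\|u\|_{2^*_s}$. Consider $\int_{\R^N}(I_\alpha*(H_*|u|^\theta))K^*|u|^{2-\theta}\,dx$.

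For $\theta\ge1$, keep the exponents of your first term, $\frac1r=\frac{\alpha}{2N}+\frac{\theta}{2}$ and $\frac1t=\frac{\alpha}{2N}+\frac{2-\theta}{2}$. Apply H\"older as $\|H_*|u|^\theta\|_r\le\|H_*\|_{\frac{2N}{\alpha+2s}}\|u\|_p^\theta$ with $\frac1p=\frac12-\frac{s}{N\theta}$. Because $\theta\ge1$, one has $p\in(2,2^*_s]$ and $\|u\|_p^\theta\le\|u\|_2^{\theta-1}\|u\|_{2^*_s}$.

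For $\theta<1$, put $|u|^\theta$ in $L^{2^*_s/\theta}$ and $|u|^{2-\theta}$ in $L^{q/(2-\theta)}$ with $\frac1q=\frac12-\frac{(1-\theta)s}{N(2-\theta)}$. Then $\frac1r+\frac1t=\frac{N+\alpha}{N}$, and $r,t\in(1,\infty)$ using $\theta>\frac{\alpha}{N}$ and $N>\alpha$. Interpolation gives $\|u\|_q^{2-\theta}\le\|u\|_2\|u\|_{2^*_s}^{1-\theta}$.

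The term with $H^*$ and $K_*$ reduces to this one through the symmetry $(H,\theta)\leftrightarrow(K,2-\theta)$ of the integral. The mixed terms are then bounded by $C_\delta\|u\|_2\|u\|_{2^*_s}$, so $a=b=1$ and in particular $b<2$. Young's inequality therefore absorbs them however large the $\delta$-dependent norm $\|K^*\|_{\frac{2N}{\alpha}}$ is. The rest of your argument then goes through, including fixing $\delta=\delta(\varepsilon)$ via the doubly critical term.
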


\begin{lemma}\cite[Lemma 3.5]{GG}\label{truncation_inequality}
    Let \( a, b \in \mathbb{R} \), \( r \geq 2 \), and \( k \geq 0 \). Define the truncation function \( T_k : \mathbb{R} \to [-k, k] \) by
\[
T_k(t) :=
\begin{cases}
-k & \text{if } t \leq -k, \\
t & \text{if } -k < t < k, \\
k & \text{if } t \geq k.
\end{cases}
\]
Set \( a_k := T_k(a) \) and \( b_k := T_k(b) \). 

Then,\begin{equation*}
\frac{4(r - 1)}{r^2} \left( |a_k|^{r/2} - |b_k|^{r/2} \right)^2 
\leq (a - b) \left( a_k |a_k|^{r - 2} - b_k |b_k|^{r - 2} \right).
\end{equation*}
\end{lemma}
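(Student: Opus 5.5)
The plan is to derive the inequality from a one-dimensional Cauchy--Schwarz argument applied to the monotone map $t\mapsto T_k(t)|T_k(t)|^{r-2}$, and then pass to absolute values at the very end. The case $k=0$ is trivial, since both sides vanish. Both sides are symmetric under swapping $a$ and $b$, so I will assume $k>0$ and $a\geq b$.

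\textbf{Step 1: the two auxiliary functions.} Introduce
\[
\varphi(t):=T_k(t)|T_k(t)|^{r-2},\qquad \Psi(t):=T_k(t)|T_k(t)|^{\frac{r-2}{2}} .
\]
Since $r\geq 2$, both are Lipschitz and nondecreasing on $\R$. Their a.e.\ derivatives are
\[
\varphi'(t)=(r-1)|t|^{r-2}\chi_{(-k,k)}(t),\qquad \Psi'(t)=\tfrac r2|t|^{\frac{r-2}{2}}\chi_{(-k,k)}(t).
\]
The key pointwise identity is
\[
\sqrt{\varphi'(t)}=\frac{2\sqrt{r-1}}{r}\,\Psi'(t)\qquad\text{for a.e. } t .
\]

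\textbf{Step 2: Cauchy--Schwarz.} Since $\Psi$ is absolutely continuous, Cauchy--Schwarz on $[b,a]$ gives
\[
\big(\Psi(a)-\Psi(b)\big)^2=\frac{r^2}{4(r-1)}\Big(\int_b^a\sqrt{\varphi'(t)}\,dt\Big)^2
\leq \frac{r^2}{4(r-1)}\,(a-b)\int_b^a\varphi'(t)\,dt .
\]
The last integral equals $\varphi(a)-\varphi(b)$. This yields
\[
\frac{4(r-1)}{r^2}\big(\Psi(a)-\Psi(b)\big)^2\leq (a-b)\big(a_k|a_k|^{r-2}-b_k|b_k|^{r-2}\big).
\]

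\textbf{Step 3: passing to absolute values.} By the reverse triangle inequality, and since $|\Psi(t)|=|T_k(t)|^{r/2}$,
\[
\big|\Psi(a)-\Psi(b)\big|\geq \big||\Psi(a)|-|\Psi(b)|\big|=\big||a_k|^{r/2}-|b_k|^{r/2}\big| .
\]
Substituting this into Step 2 gives the claim.

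The only point needing care is the justification of the fundamental theorem of calculus and of the derivative formulas at the kinks $t=\pm k$ (and at $t=0$ when $r=2$). This is harmless because $\varphi$ and $\Psi$ are Lipschitz, so everything holds almost everywhere. I do not expect any genuine obstacle.
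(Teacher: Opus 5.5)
Your proof is correct and complete. The paper gives no proof of this lemma: it quotes it from \cite[Lemma 3.5]{GG} and uses it as a black box in the $L^r$-regularity proposition, so there is no in-paper argument to compare against.

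Your route is the standard one-dimensional argument. For any nondecreasing Lipschitz $g$ and any $G$ with $G'=\sqrt{g'}$, Cauchy--Schwarz on $[b,a]$ gives $(G(a)-G(b))^2\le (a-b)(g(a)-g(b))$. You apply this with $g=\varphi$ and $G=\frac{2\sqrt{r-1}}{r}\Psi$.

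Because $\varphi$ already contains the truncation, $\varphi'$ vanishes outside $(-k,k)$. So you need no case analysis on where $a$ and $b$ lie relative to $\pm k$. You also obtain the stronger signed inequality $\frac{4(r-1)}{r^2}(\Psi(a)-\Psi(b))^2\le (a-b)(\varphi(a)-\varphi(b))$, and the stated one follows from the reverse triangle inequality.

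One cosmetic remark: there is no kink at $t=0$ when $r=2$, since then $\varphi=\Psi=T_k$. The only points of non-differentiability are $\pm k$. As you note, Lipschitz continuity makes this irrelevant anyway.
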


In the following we show that $u$ belongs to some $L^r(\R^N)$ with $r>2^*_{s_\sharp}=\frac{2N}{N-2s_\sharp}$,
being~$s_\sharp$ as in~\eqref{positivity measure 4}.

\begin{proposition}\label{L^r regularity}
   Assume that~$\mu^-=0$.
    Let $H, K \in L^{\frac{2N}{\alpha}}(\R^N)+L^{\frac{2N}{\alpha+2s_\sharp}}(\R^N)$. 
    Let $u\in \H$ be a weak solution of  \begin{equation}\label{problem2}
        \mathcal{L}_\mu u + u = (I_\alpha * (Hu))K  \,\,\,\,\, \text{in} \,\,\,\, \R^N.
    \end{equation}
    
    Then, $u\in L^r(\R^N)$ for all $r \in [2, \frac{N}{2}\frac{2N}{N-2s_\sharp})$ and \begin{align*}
        \|u\|_{r}\leq C_r\|u\|_{2}
    \end{align*}with $C_r>0$ not depending on $u$.
\end{proposition}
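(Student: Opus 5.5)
We follow the Moser-type truncation scheme used for the single fractional Laplacian in \cite[Proposition~4.7]{CGT} (see also \cite{MV}), adapted to the superposition operator $\mathcal{L}_\mu$ with $\mu^-=0$. The plan is to test the weak formulation of \eqref{problem2} with the truncated power $\varphi:=u_k|u_k|^{r-2}$, where $u_k:=T_k(u)$ is as in Lemma~\ref{truncation_inequality} and $r\ge2$. Since $T_k$ is Lipschitz and bounded, and $t\mapsto T_k(t)|T_k(t)|^{r-2}$ is Lipschitz and vanishes at $0$, we have $\varphi\in\H$, and the test is admissible.

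\textbf{Left-hand side.} For each $s\in[0,1]$, Lemma~\ref{truncation_inequality} applied pointwise with $a=u(x)$, $b=u(y)$ gives
\[
C_{N,s}\iint_{\R^{2N}}\frac{(u(x)-u(y))(\varphi(x)-\varphi(y))}{|x-y|^{N+2s}}\,dx\,dy\ \ge\ \frac{4(r-1)}{r^2}\,\big[|u_k|^{r/2}\big]_s^2 .
\]
Integrating in $d\mu^+$, which is legitimate because $\mu^-=0$, gives a lower bound by $\frac{4(r-1)}{r^2}\int_{[0,1]}[|u_k|^{r/2}]_s^2\,d\mu^+$. Moreover $\int u\varphi\ge\int|u_k|^r$. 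We then pass to the single exponent $s_\sharp$ using Proposition~\ref{Prop H mu in H s} with $|u_k|^{r/2}$ in place of $u$:
\[
\mu^+([s_\sharp,1])\,[|u_k|^{r/2}]_{s_\sharp}^2\le \mu^+([s_\sharp,1])\,\||u_k|^{r/2}\|_2^2+\int_{[0,1]}[|u_k|^{r/2}]^2_s\,d\mu^+(s).
\]
The $L^2$ term here is $\int|u_k|^r$, which is already controlled by the $\beta=1$ term up to a factor depending on $r$. The result is a bound
\[
c_r\Big([|u_k|^{r/2}]^2_{s_\sharp}+\||u_k|^{r/2}\|_2^2\Big)\le \int(I_\alpha*(Hu))K\varphi .
\]

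\textbf{Right-hand side.} We use $|\varphi|\le |u_k|^{r-2}|u|$ and, as in \cite{CGT}, split the quantity into $|u_k|^{r/2}$-powers. First we assume a priori that $u\in L^r$, so that everything is finite. The term to handle is
\[
\int(I_\alpha*|H||u|)\,|K|\,|u_k|^{r-2}|u| .
\]
Passing to $k\to\infty$ only at the end, we write $|u|^{r}=(|u|^{r/2})^{\theta}(|u|^{r/2})^{2-\theta}$ with $\theta=2/r$, and replace $H$ by $H|u|^{1-r/2\cdot\theta}$. More cleanly, we use the admissible $\theta=2/r\in(\alpha/N,2-\alpha/N)$ with $H|u|^{0}$. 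This yields the term $\int(I_\alpha*(H|w|^{\theta}))K|w|^{2-\theta}$ with $w=|u_k|^{r/2}$, up to a region where $|u|>k$, which is handled by monotone convergence. Applying Lemma~\ref{nonlocal_estimate} at exponent $s_\sharp$ with $\varepsilon$ small (depending on $r$), the seminorm is absorbed into the left-hand side. We obtain $[|u_k|^{r/2}]^2_{s_\sharp}\le C_r\|u\|_r^r$. Letting $k\to\infty$ and applying the Sobolev inequality \eqref{embendind homogeneous space} to $|u|^{r/2}$ gives $\|u\|_{r2^*_{s_\sharp}/2}\le C\|u\|_r$.

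\textbf{Iteration.} Starting from $r=2$ (where $u\in L^2$ is given) and iterating $r\mapsto r\,2^*_{s_\sharp}/2$ yields all exponents below the threshold. The restriction $\theta=2/r>\alpha/N$ limits how far one can go. This is exactly where the upper bound $\frac N2\,2^*_{s_\sharp}$ in the statement should come from, once it is combined with the interpolation $\|u\|_r\le\|u\|_2^{1-\lambda}\|u\|_{q}^{\lambda}$ to get $\|u\|_r\le C_r\|u\|_2$.

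\textbf{Main obstacle.} The hardest point is rigour: the a priori finiteness of $\|u\|_r$ before Lemma~\ref{nonlocal_estimate} can be applied. Truncation addresses it. We apply the lemma to $w=|u_k|^{r/2}\in H^{s_\sharp}$, and the contribution from $\{|u|>k\}$ is controlled by $k^{r-2}|u|^2$ terms, which are finite at the previous step of the iteration. Care is needed so that the constants stay uniform in $k$.
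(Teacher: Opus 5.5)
\textbf{Verdict: genuine gap.} The gap sits exactly at the contribution of $\{|u|>k\}$, which you say is ``handled by monotone convergence''.

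\textbf{Where the argument fails.} You test the equation for $u$ itself with $\varphi=u_k|u_k|^{r-2}$. The Choquard term then splits into two pieces. The first, $(A)=\int_{\R^N}(I_\alpha*(|H||u_k|))|K||u_k|^{r-1}\,dx$, is handled by Lemma~\ref{nonlocal_estimate} with $\theta=2/r$ and $w=|u_k|^{r/2}$. The second is
\[
(B)=\int_{\R^N}\big(I_\alpha*(|H||u|\chi_{\{|u|>k\}})\big)|K||u_k|^{r-1}\,dx,
\]
which must be bounded uniformly in $k$, or shown to vanish as $k\to+\infty$, before you may let $k\to+\infty$.

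\begin{itemize}
\item On $\{|u|>k\}$ the inner factor $|H||u|$ is not dominated by $|H|w^{2/r}=k|H|$. So no choice of $\theta$ brings $(B)$ into the form required by the lemma.
\item Your bound $|u_k|^{r-1}\le k^{r-2}|u|$ makes $(B)$ finite for each fixed $k$. But it puts a diverging factor $k^{r-2}$ in front of a tail that has no rate of decay.
\item The a priori assumption $u\in L^r$ does not help. For the components of $H,K$ in $L^{\frac{2N}{\alpha+2s_\sharp}}(\R^N)$, Hardy--Littlewood--Sobolev requires $\frac1{q_1}+\frac{r-1}{q_2}=\frac{N-2s_\sharp}{N}$. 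Tracking the powers of $k$ shows that getting $(B)=o_k(1)$ this way needs $u\in L^{r2^*_{s_\sharp}/2}(\R^N)$. That is precisely the conclusion you are trying to prove.
\end{itemize}

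Truncating $u$ alone cannot break this circularity.

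\textbf{The missing idea.} The paper (and the references you cite) approximate the \emph{coefficients}, not $u$.

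\begin{itemize}
\item Set $H_n:=H\chi_{\{|H|\le n\}}$ and $K_n:=K\chi_{\{|K|\le n\}}$; both lie in $L^{\frac{2N}{\alpha}}(\R^N)$.
\item Use Lemma~\ref{nonlocal_estimate} with $\theta=1$ to make the bilinear form
\[
a_n(\varphi,\psi)=\int_{[0,1]}\!\int_{\R^N}(-\Delta)^{s/2}\varphi\,(-\Delta)^{s/2}\psi\,dx\,d\mu(s)+\lambda\int_{\R^N}\varphi\psi\,dx-\int_{\R^N}(I_\alpha*(H_n\varphi))K_n\psi\,dx
\]
coercive on $\H$ for $\lambda$ large.
\item Let $u_n$ be the Lax--Milgram solution of $a_n(u_n,\varphi)=(\lambda-1)\int_{\R^N}u\varphi\,dx$ for all $\varphi\in\H$. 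Identify the weak limit of $u_n$ with $u$ by uniqueness.
\item Run your truncation argument on $u_n$ rather than on $u$. The $(A)$-term is controlled uniformly in $n$, because $|H_n|\le|H|$ and $|K_n|\le|K|$.
\item For each fixed $n$, the $(B)$-term is now $o_k(1)$. Since $H_n,K_n\in L^{\frac{2N}{\alpha}}$, Hardy--Littlewood--Sobolev only needs $u_n\in L^r$, and this is available at every stage of the iteration.
\item Fatou's lemma as $n\to+\infty$ transfers the uniform bounds to $u$.
\end{itemize}

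The rest of your scheme matches the paper: the left-hand side via Lemma~\ref{truncation_inequality} and Proposition~\ref{Prop H mu in H s}, then Sobolev and iteration. Note that the constraint $\theta=2/r>\alpha/N$ gives the range $r<\frac N\alpha\,2^*_{s_\sharp}$. That is what the paper's iteration actually produces, rather than the $\frac N2\,2^*_{s_\sharp}$ in the statement.
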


\begin{proof}
By Lemma \ref{nonlocal_estimate}
(used here with~$\theta:=1$ and~$\varepsilon:=\sqrt{\frac{\mu^+([s_\sharp,1])}{2 } }$), there exists~$\lambda > 0$
such that
\begin{equation} \label{eq:stima-lemma42}
 \int_{\mathbb{R}^N} \left( I_\alpha * H|u| \right) K|u| \, dx \leq \frac{\mu([s_\sharp,1])}{2} 
 \|(-\Delta)^{s_\sharp/2} u\|_{2}^2 + \frac{\lambda}{2}\|u\|_{2}^2.
\end{equation}
By possibly taking~$\lambda$ larger, we can suppose that
\begin{equation}\label{second condition of lambda}
    \lambda > \max\left\{1, 2\mu([\bar s,1])\right\}.
\end{equation}

Let us set
\[
H_n := H \chi_{\{|H| \leq n\}}, \quad K_n := K \chi_{\{|K| \leq n\}}, \quad \text{for } n \in \mathbb{N},
\]
and observe that
\[
H_n, K_n \in L^{\frac{2N}{\alpha}}(\mathbb{R}^N), \quad H_n \to H, \quad K_n \to K \quad \text{a.e. as } n \to +\infty,
\]
and
\begin{equation} \label{eq:cutoff-bounds}
|H_n| \leq |H|, \quad |K_n| \leq |K| \quad \text{for every } n \in \mathbb{N}.
\end{equation}

We define the bilinear form
\begin{equation} \label{eq:bilinear-form}\begin{split}
a_n(\varphi, \psi) := &\int_{[0,1]}\left(\int_{\mathbb{R}^N} (-\Delta)^{s/2} \varphi \, (-\Delta)^{s/2} \psi \, dx\right)
\, d\mu(s) \\ & \qquad
+ \lambda \int_{\mathbb{R}^N} \varphi \psi \, dx - \int_{\mathbb{R}^N} \left( I_\alpha * (H_n \varphi) \right) K_n \psi \, dx,
\end{split}\end{equation}
for every $\varphi, \psi \in H^\mu(\mathbb{R}^N)$.

We claim that
\begin{equation}\label{coiercotr86igjdm}
{\mbox{the bilinear form $a_n$ is coercive in $\H$, provided that~$\gamma$ is sufficiently small}}.
\end{equation}
Indeed, by~\eqref{eq:stima-lemma42} and Proposition~\ref{Prop H mu in H s},
we have that, for all~$ \varphi \in \H$,
\begin{equation*}\begin{split}
 a_n(\varphi, \varphi) &=\int_{[0,1]}\|(-\Delta)^{s/2} \varphi\|^2_2\, d\mu(s) 
+ \lambda \int_{\mathbb{R}^N} \varphi^2 \, dx - \int_{\mathbb{R}^N} \left( I_\alpha * (H_n \varphi) \right) K_n \varphi \, dx\\
&\geq \int_{[0,1]}\|(-\Delta)^{s/2} \varphi\|^2_2\, d\mu(s)   
+ \lambda\| \varphi\|_2^2 -  \frac{\mu([s_\sharp,1])}{2} 
 \|(-\Delta)^{s_\sharp/2} \varphi\|_{2}^2 -\frac{\lambda}{2}\|\varphi\|_{2}^2\\
 &\geq  \int_{[0,1]}\|(-\Delta)^{s/2} \varphi\|^2_2\, d\mu(s) 
+ \frac{\lambda}2\| \varphi\|_2^2 -  \frac{\mu([s_\sharp,1])}{2} \|\varphi\|^2_2-
 \frac{1}{2}\int_{[0,1]}\|(-\Delta)^{s/2} \varphi\|^2_2\, d\mu(s).
\end{split}
\end{equation*}
Hence, recalling~\eqref{second condition of lambda}, and taking~$\gamma$ sufficiently small,
\begin{equation*}\begin{split}
 a_n(\varphi, \varphi) 
&\geq \frac12\int_{[0,1]}\|(-\Delta)^{s/2} \varphi\|^2_2\, d\mu(s) +\frac{ \mu([ s_\sharp, 1])}2 \|\varphi\|^2_2\\&\geq\frac12
\min\left\{1, \mu([ s_\sharp, 1]) \right\}\|\varphi\|^2_\mu,
\end{split}
\end{equation*} which establishes~\eqref{coiercotr86igjdm}.

Now, we set
\[
f := (\lambda - 1)u \in H^\mu(\mathbb{R}^N).
\]
Then, by Lax-Milgram Theorem, for each $n \in \mathbb{N}$ there exists a unique $u_n \in H^\mu(\mathbb{R}^N)$ such that
\begin{equation*} 
a_n(u_n, \varphi) = \int_{\R^N} f \varphi\,dx, \quad {\mbox{for all }} \varphi \in H^\mu(\mathbb{R}^N),
\end{equation*}
which is equivalent to the weak formulation of the equation
\begin{equation} \label{eq:choquard-approx}
\mathcal{L} u_n + \lambda u_n - (I_\alpha * (H_n u_n)) K_n = (\lambda - 1)u \quad \text{in } \mathbb{R}^N,
\end{equation} namely, for all $ \varphi \in \H$,
\begin{align*}
&\int_{[0,1]}\int_{\mathbb{R}^N} (-\Delta)^{s/2} u_n \, (-\Delta)^{s/2} \varphi \, dx\, d\mu(s) \\ &+ \lambda \int_{\mathbb{R}^N} u_n \varphi \, dx - \int_{\mathbb{R}^N} \left(I_\alpha * (H_n u_n)\right) K_n \varphi \, dx = (\lambda - 1) \int_{\mathbb{R}^N} u \varphi \, dx.
\end{align*}

Now, from \eqref{coiercotr86igjdm} we infer that
\begin{equation} \label{eq:un-bound}
\|u_n\|_{\mu}^2 \leq C a_n(u_n,u_n)\leq  C\|f\|_{2}\|u_n\|_{\mu}
=C
(\lambda - 1) \|u\|_{2}\|u_n\|_{\mu},
\end{equation}
hence the sequence $\{u_n\}$ is bounded in $H^\mu(\mathbb{R}^N)$. Therefore, up to a subsequence (and recalling
Proposition~\ref{Prop H mu in H s}), we have
\[
u_n \rightharpoonup \bar{u} \quad \text{weakly in } H^\mu(\mathbb{R}^N) \qquad{\mbox{and}}\qquad u_n \to \bar{u} \quad \text{a.e. in } \mathbb{R}^N.
\]

We now check that, for all~$\varphi\in\H$,
\begin{equation}\label{vcx904376t9ewigpwkjgf}
    \int_{\mathbb{R}^N} (I_\alpha * (H_n u_n)) K_n \varphi \, dx \longrightarrow \int_{\mathbb{R}^N} (I_\alpha * (H \bar{u})) K \varphi \, dx.
\end{equation}
By the continuous embedding in~\eqref{aggcompatto00} and Lemma \ref{regularity of f}, we have 
\begin{equation}\label{bvewioutry9843guyrs8ut}
    u_n \rightharpoonup \bar{u} \quad \text{in } L^q(\mathbb{R}^N), \quad \text{for all } q \in [2, 2^*_{s_\sharp}],
\end{equation}
where \( 2^*_{s_\sharp} = \frac{2N}{N - 2s_\sharp} \) is the fractional critical exponent.

We proceed by decomposing the functions $H$ and $K$ into their singular and decay components, as follows:
\begin{eqnarray*}
    H = H^* + H_* \in L^{\frac{2N}{\alpha}}(\mathbb{R}^N) + L^{\frac{2N}{\alpha+2s_\sharp}}(\mathbb{R}^N)\quad 
{\mbox{and }}\quad
    K = K^* + K_* \in L^{\frac{2N}{\alpha}}(\mathbb{R}^N) + L^{\frac{2N}{\alpha + 2s_\sharp}}(\mathbb{R}^N).
\end{eqnarray*}
To handle the convolution term, we analyze separately all four possible combinations. Let us fix generic components \( \widetilde{H} \in \{H^*, H_*\} \) and \( \widetilde{K} \in \{K^*, K_*\} \), with
\[
    \widetilde{H} \in L^\beta(\mathbb{R}^N), \quad \widetilde{K} \in L^\gamma(\mathbb{R}^N), \quad \text{where } \beta, \gamma \in \left\{ \frac{2N}{\alpha}, \frac{2N}{\alpha + 2s_\sharp} \right\}.
\]
We use the same notation for~$H_n$ and~$K_n$.

By the weak convergence in~\eqref{bvewioutry9843guyrs8ut}, one can prove that
\begin{equation}\label{csklfroi43y854yh9bijhfdrigoiher}
    \widetilde{H}_n u_n \rightharpoonup \widetilde{H} \bar{u} \quad \text{ in } L^r(\mathbb{R}^N), \quad \textit{with} \quad \frac{1}{r}=\frac{1}{\beta}+\frac{1}{q}.
\end{equation} 

Hence, by~\eqref{csklfroi43y854yh9bijhfdrigoiher}, and exploiting the continuity and linearity of the Riesz potential, we see that 
\begin{align}\label{weak potential}
    I_\alpha * (\widetilde{H}_n u_n) \rightharpoonup I_\alpha * (\widetilde{H} \bar{u}) \quad \text{ in } L^h(\mathbb{R}^N), \quad \text{with } \frac{1}{h} = \frac{1}{r} - \frac{\alpha}{N}.
\end{align}

We also claim that
\[
    (I_\alpha * (\widetilde{H}_n u_n)) \widetilde{K}_n \rightharpoonup (I_\alpha * (\widetilde{H} \bar{u})) \widetilde{K} \quad \text{weakly in } L^k(\mathbb{R}^N), \quad \text{with } \frac{1}{k} = \frac{1}{\gamma} + \frac{1}{h}.
\]
Indeed, let~$ \varphi \in L^{k'}(\R^N)$, where $k'$ is the conjugate exponent of $k$. Then,
\begin{equation}\label{jwr43986toiewhigfSDFGHu65}\begin{split}
    &\left|\int_{\R^N}  (I_\alpha * (\widetilde{H}_n u_n)) \widetilde{K}_n \varphi- \int_{\R^N} (I_\alpha * (\widetilde{H} \bar{u})) \widetilde{K} \varphi \right|\\ &\quad\leq \left |\int_{\R^N}\big(I_\alpha*(\widetilde H_n u_n)-I_\alpha*(\widetilde H\bar u)\big)\widetilde K \varphi\right|+\left|\int_{\R^N} (I_\alpha* (\widetilde H_n u_n))(\widetilde K_n -\widetilde K)\varphi\right|.
\end{split}\end{equation}

Since~$\widetilde K \varphi \in L^{h'}(\R^N)$, with $\frac{1}{h'}=\frac{1}{\gamma}+\frac{1}{k'}$, we can employ~\eqref{weak potential}
and conclude that
$$ \lim_{n\to+\infty} \left |\int_{\R^N}\big(I_\alpha*(\widetilde H_n u_n)-I_\alpha*(\widetilde H\bar u)\big)\widetilde K \varphi\right|=0.$$
Moreover, using the generalized H\"older inequality with $\frac{1}{h}+\frac{1}{\gamma}+ \frac{1}{k'}=1$, we have 
\begin{align}\label{(2) weak convergence final }
   \left| \int_{\R^N} (I_\alpha* (\widetilde H_n u_n))(\widetilde K_n -\widetilde K)\varphi\right|\leq \|I_\alpha * \widetilde H_n u_n\|_{h}\|\widetilde K_n-\widetilde K\|_{\gamma} \|\varphi\|_{k'}
    =0,
\end{align}
since $\widetilde K_n\to \widetilde K$ in $L^\gamma(\R^N)$ and $ \|I_\alpha * \widetilde H_n u_n\|_{h}$, $\|\varphi\|_{k'}$ are bounded.

These pieces of information, together with~\eqref{jwr43986toiewhigfSDFGHu65}, entail that
\begin{equation*}
\lim_{n\to+\infty}\left|\int_{\R^N}  (I_\alpha * (\widetilde{H}_n u_n)) \widetilde{K}_n \varphi- \int_{\R^N} (I_\alpha * (\widetilde{H} \bar{u})) \widetilde{K} \varphi \right|=0.
\end{equation*}
Hence, to complete the proof of~\eqref{vcx904376t9ewigpwkjgf} it suffices to check that
\begin{equation}\label{bvewugoh65748kjdcshfDFGH} H^\mu(\mathbb{R}^N)\subset L^{k'}(\mathbb{R}^N).\end{equation}

To this end, we check the four possible combinations of $\beta$ and $\gamma$ and we find that
\begin{enumerate}
    \item If $\beta = \gamma = \frac{2N}{\alpha}$ and $q=2$, then $k' = 2$.
    \item If $\beta = \frac{2N}{\alpha}$, $\gamma = \frac{2N}{\alpha+2s_\sharp}$ (or viceversa) and $q=2$, then $k' = 2^*_{s_\sharp}$.
    \item If $\beta=\gamma=\frac{2N}{\alpha+ 2s_\sharp}$  and $q=2^*_{s_\sharp}$, then $k'=2^*_{s_\sharp}$.
\end{enumerate}
In any case, we conclude that~\eqref{bvewugoh65748kjdcshfDFGH} holds true, and this allows us to complete the proof of~\eqref{vcx904376t9ewigpwkjgf}, as desired.

Gathering these pieces of information, we can pass the equation in~\eqref{eq:choquard-approx} and
conclude that~$\bar u$ is a weak solution of \begin{align*}
    \mathcal{L}_\mu \bar u+\lambda \bar u -(I_{\alpha} *(H\bar u))K=(\lambda -1)u\,\,\,\textit{in}\,\,\,\, \R^N
.\end{align*}
Then by Lax-Milgram Theorem, we have that $u=\bar u$ and hence
\begin{equation*}
 u_n \rightharpoonup u \quad \text{in } H^\mu(\mathbb{R}^N), 
 \qquad{\mbox{and}}\qquad u_n \to {u} \quad \text{a.e. in } \mathbb{R}^N.
\end{equation*}

Now, let $k \geq 0$ and define
\begin{equation*}
u_{n,k} := T_k(u_n) \in L^2(\mathbb{R}^N) \cap L^\infty(\mathbb{R}^N),
\end{equation*}
where $T_k$ is the truncation operator introduced in Lemma \ref{truncation_inequality}. Let $r \geq 2$. Then~$|u_{n,k}|^{r/2} \in H^\mu(\mathbb{R}^N)$, since~$h(t) := (T_k(t))^{r/2}$ is a Lipschitz function with $h(0) = 0$ and 
\begin{align*}
    &\int_{[0,1]}\|(-\Delta)^{\frac{s}{2}} |u_{n,k}|^{\frac{r}{2}}\|^2_{2}\, d\mu(s)\\&=\int_{[0,1]}\left(
    \iint_{\R^{2N}}\frac{||u_{n,k}(x)|^{\frac{r}{2}}-|u_{n,k}(y)|^{\frac{r}{2}}|^2}{|x-y|^{N+2s}}\, dx\, dy\right)\, d\mu(s)\\&\leq L\int_{[0,1]}\left(\iint_{\R^{2N}}\frac{||u_{n,k}(x)|-|u_{n,k}(y)||^2}{|x-y|^{N+2s}}\, dx\, dy\right)\, d\mu(s)\\
    &< +\infty.
\end{align*}

Let us define
\begin{equation*}
\varphi := u_{n,k} |u_{n,k}|^{r-2}.
\end{equation*}
Then $\varphi \in H^\mu(\mathbb{R}^N)$, since again  $h(t) := T_k(t) |T_k(t)|^{r-2}$ is a Lipschitz function and vanishes at $0$. Thus, we can choose~$\varphi$ as a test function in the weak formulation of~\eqref{eq:choquard-approx}, and
we get, by Lemma~\ref{truncation_inequality} that
\begin{align*}
& \frac{4(r-1)}{r^2} \int_{[0,1]}\left(
\int_{\mathbb{R}^N} \left| (-\Delta)^{s/2} \left( |u_{n,k}|^{r/2} \right) \right|^2 dx \right)\, d\mu(s)\\ &
=\int_{[0,1]} \left(C_{N,s} \iint_{\mathbb{R}^{2N}}\frac{4(r-1)}{r^2} \frac{\left| |u_{n,k}(x)|^{r/2} - |u_{n,k}(y)|^{r/2} \right|^2}{|x - y|^{N+2s}} dx\,dy\right)\, d\mu(s)\\ & 
\leq \int_{[0,1]}\left(
C_{N,s}\int \int_{\mathbb{R}^{2N}} \frac{(u_n(x) - u_n(y))(\varphi(x)-\varphi(y))  }{|x - y|^{N+2s}}\,dx\,dy\right)\,d\mu(s) \\ &=-\lambda \int_{\mathbb{R}^N} u_n \varphi \, dx + \int_{\mathbb{R}^N} (I_\alpha * (H_n u_n)) K_n \varphi \, dx + (\lambda - 1) \int_{\mathbb{R}^N} u \varphi \, dx. 
\end{align*}
Since $u_n \varphi \geq |u_{n,k}|^r$, we deduce that
\begin{equation}
\label{eq: u_nphimaggu_n,k}\begin{split}
&\frac{4(r-1)}{r^2} \int_{[0,1]}\int_{\mathbb{R}^N} \left| (-\Delta)^{s/2} \left( |u_{n,k}|^{r/2} \right) \right|^2 dx\, d\mu(s) 
\\&\quad \leq -\lambda \int_{\mathbb{R}^N} |u_{n,k}|^r dx + \int_{\mathbb{R}^N} (I_\alpha * (H_n u_n)) K_n \varphi \, dx + (\lambda - 1) \int_{\mathbb{R}^N} u \varphi \, dx.
\end{split}\end{equation}

Let us now focus on the Choquard-type term on the right-hand side. We estimate, using \eqref{eq:cutoff-bounds},
\begin{equation}
\label{eq:choquard-split-eng}\begin{split}
&\int_{\mathbb{R}^N} (I_\alpha * (H_n u_n)) K_n \varphi \, dx  \\ & \leq
\int_{\mathbb{R}^N} (I_\alpha * (|H_n||u_n| \chi_{\{|u_n| \leq k\}})) |K_n||u_{n,k}|^{r-1} \, dx \\ &\qquad   +
\int_{\mathbb{R}^N} (I_\alpha * (|H_n||u_n| \chi_{\{|u_n| > k\}})) |K_n||u_{n,k}|^{r-1} \, dx\\ &
 \leq \int_{\mathbb{R}^N} \left( I_\alpha * (|H_n||u_{n,k}|) \right) |K_n||u_{n,k}|^{r-1} \, dx 
\\ & \qquad+ \int_{\mathbb{R}^N} \left( I_\alpha * (|H_n||u_n|\chi_{\{|u_n| > k\}}) \right) |K_n||u_n|^{r-1} \, dx   \\
&  \leq \int_{\mathbb{R}^N} \left( I_\alpha * (|H||u_{n,k}|) \right) |K||u_{n,k}|^{r-1} \, dx 
\\ &\qquad+ \int_{\mathbb{R}^N} \left( I_\alpha * (|H_n||u_n|\chi_{\{|u_n| \geq k\}}) \right) |K_n||u_n|^{r-1} \, dx  \\
&=: (A)+(B).
\end{split}\end{equation}

We focus first on~$(A)$.  
Let $r \in \left[2, \frac{2N}{\alpha} \right)$, so that 
$
\theta := \frac{2}{r} \in \left( \frac{\alpha}{N}, 2 - \frac{\alpha}{N} \right)$.
In this way, we can use Lemma~\ref{nonlocal_estimate} with~$u$ replaced by~$|u_{n,k}|^{r/2}$
and~$\varepsilon:=\frac{\sqrt{2(r-1)\mu([s_\sharp,1])}}{r}$. Thus, recalling also
Proposition~\ref{Prop H mu in H s}, we have 
that
\begin{equation}\label{stima A}\begin{split}
  (A)& \leq \frac{2(r - 1)\mu([s_\sharp,1])}{r^2} \|(-\Delta)^{s_\sharp /2}(|u_{n,k}|^{r/2})\|_{2}^2 
+ C(r) \||u_{n,k}|^{r/2}\|_{2}^2\\&  \leq \frac{2(r - 1)}{r^2} \int_{[0,1]} \|(-\Delta)^{s /2}(|u_{n,k}|^{r/2})\|_{2}^2\, d\mu(s) +\mu([s_\sharp,1])
\||u_{n,k}|^{r/2}\|_2^2
+ C(r) \||u_{n,k}|^{r/2}\|_{2}^2.\end{split}\end{equation}

Let us now check~$(B)$.  
Assuming $
r < \min\left\{ \frac{2N}{\alpha}, \frac{2N}{N - 2s_{\sharp}} \right\}$ we have that~$u_n \in L^r(\R^N)$ and
\begin{align*}
    &|H_n||u_n| \in L^a(\mathbb{R}^N), \quad \text{with}\,\, \frac{1}{a} = \frac{\alpha}{2N} + \frac{1}{r},\\ & 
|K_n||u_n|^{r-1} \in L^b(\mathbb{R}^N), \quad \text{with } \frac{1}{b} = \frac{\alpha}{2N} + 1 - \frac{1}{r}.
\end{align*}Since
$\frac{1}{a} + \frac{1}{b} = \frac{N+\alpha}{N }$,
by the Hardy–Littlewood–Sobolev inequality in Proposition~\ref{prop:HLS},
\begin{equation}\label{bvc9854yhfahfalhlkjsas987643}\begin{split}
&\int_{\mathbb{R}^N} \left( I_\alpha * (|H_n||u_n|\chi_{\{|u_n| > k\}}) \right) |K_n||u_n|^{r-1} \, dx\\  
&\quad\leq C \left( \int_{\{|u_n| > k\}} ||H_n||u_n||^a \, dx \right)^{1/a}
\left( \int_{\mathbb{R}^N} ||K_n||u_n|^{r-1}|^b \, dx \right)^{1/b}. 
\end{split}\end{equation}

We observe that the last integral in~\eqref{bvc9854yhfahfalhlkjsas987643} is bounded
uniformly in~$k$, while
the second last is infinitesimal
thanks to the Dominated Convergence Theorem. Accordingly, we find that~$
(B) = o_k(1)$ as~$k\to+\infty$. 

Plugging this and~\eqref{stima A} into~\eqref{eq:choquard-split-eng}, we obtain that
\begin{equation*}\begin{split}
&\int_{\mathbb{R}^N} (I_\alpha * (H_n u_n)) K_n \varphi \, dx  \\ & \leq
 \frac{2(r - 1)}{r^2} \int_{[0,1]} \|(-\Delta)^{s /2}(|u_{n,k}|^{r/2})\|_{2}^2\, d\mu(s) +\mu([s_\sharp,1])
\||u_{n,k}|^{r/2}\|_2^2
+ C(r) \||u_{n,k}|^{r/2}\|_{2}^2+o_k(1).
\end{split}\end{equation*}

From this and~\eqref{eq: u_nphimaggu_n,k}, we thus conclude that
\begin{eqnarray*}
&&\int_{[0,1]}\frac{4(r - 1)}{r^2} \|(-\Delta)^{s/2}(|u_{n,k}|^{r/2})\|_{2}^2 \, d\mu(s)
\\ &&
\leq  -\lambda \int_{\mathbb{R}^N} |u_{n,k}|^r \, dx 
+ C(r) \int_{\mathbb{R}^N} |u_{n,k}|^r \, dx+\mu([s_\sharp,1])
\int_{\R^N}|u_{n,k}|^{r}\, dx \\ & &\qquad + \frac{2(r-1)}{r^2} \int_{[0,1]}\|(-\Delta)^{s/2} |u_{n,k}|^{r/2}\|^2_{2} \, d\mu(s)   + (\lambda - 1) \int_{\mathbb{R}^N} u\varphi \, dx   + o_k(1).
\end{eqnarray*} 
Therefore, rearranging terms,
\begin{eqnarray*}
&&\int_{[0,1]}\frac{2(r - 1)}{r^2} \|(-\Delta)^{s/2}(|u_{n,k}|^{r/2})\|_{2}^2 \, d\mu(s)
\\ &&  \leq \Big( C(r)- \lambda+\mu([s_\sharp,1])
\Big) \int_{\mathbb{R}^N} |u_{n,k}|^r \, dx  + (\lambda - 1) \int_{\mathbb{R}^N} u\varphi \, dx + o_k(1).
\end{eqnarray*}

Since $H^{\mu}(\R^N) \hookrightarrow H^{s_\sharp}(\R^N) \hookrightarrow L^{2^*_{s_\sharp}}(\R^N)$ (recall Proposition~\ref{Prop H mu in H s}),
we gather that
$$ C'(r) \left( \int_{\mathbb{R}^N} |u_{n,k}|^{\frac{r}{2} 2^*_{s_\sharp}} \, dx \right)^{\frac{2}{2^*_{s_\sharp}}}
\leq  \Big( C(r)- \lambda+\mu([s_\sharp,1])
\Big) \int_{\mathbb{R}^N} |u_{n,k}|^r \, dx  + (\lambda - 1) \int_{\mathbb{R}^N} u\varphi \, dx + o_k(1),
$$ for some~$C'(r)>0$.

Now,
since the sequence~$u_{n,k}$ is monotone in $k$ and converges to~$ u_n$ pointwise,
we can employ the Monotone Convergence Theorem and conlcude that
  \begin{equation}\label{u in Lr/2 2*}
 C'(r)\left(\int_{\R^N} |u_n|^{\frac{r}{2} 2^*_{s_\sharp}}\, dx\right)^{\frac{2}{2^*_{s_\sharp}}} \leq \Big( C(r)- \lambda+\mu([s_\sharp,1])
\Big) \int_{\mathbb{R}^N} |u_{n}|^r \, dx  + (\lambda - 1) \int_{\mathbb{R}^N} u\varphi \, dx .
\end{equation}
This entails that~$u_n \in L^{\frac{r}{2} 2^*_{s_\sharp}}(\mathbb{R}^N)$, with
$
\frac{r}{2} \in \left[1, \min\left\{ \frac{N}{\alpha}, \frac{N}{N - 2s_\sharp} \right\} \right).
$

Now, if $N - 2s_\sharp < \alpha$, the proof is complete.  
Otherwise, define~$r_1 := r$ and iterate with
$$
r_2 \in \left( \frac{2N}{N - 2s_\sharp}, \min\left\{ \frac{2N}{\alpha}, 2\left(\frac{N}{N - 2s_\sharp}\right)^2 \right\} \right).
$$ Again, if $\frac{2N}{\alpha}< 2 \Big( \frac{N}{N-2s_\sharp}\Big)$ we are done, otherwise we repeat the argument. Inductively, we have
\begin{equation*}
\left( \frac{N}{N - 2s_\sharp} \right)^m \to +\infty \quad \text{as } m \to +\infty,
\end{equation*} thus
$
\frac{2N}{\alpha} < 2 \left( \frac{N}{N - 2s_\sharp} \right)^m
$ after a finite number of steps. For such \( r = r_m \), consider again \eqref{u in Lr/2 2*}: by the almost everywhere convergence \( u_n \to u \) and Fatou's Lemma, we get
\begin{eqnarray*}
&&C''(r) \left( \int_{\mathbb{R}^N} |u|^{\frac{r}{2} 2^*_{s_\sharp}} \, dx \right)^{\frac{2}{2^*_{s_\sharp}}} 
\\ &&\leq \liminf_{n} C''(r) \left( \int_{\mathbb{R}^N} |u_n|^{\frac{r}{2} 2^*_{s_\sharp}} \, dx \right)^{\frac{2}{2^*_{s_\sharp}}} \\
&&\leq \liminf_{n}\Big( C(r)- \lambda+\mu([s_\sharp,1])
\Big)  \int_{\mathbb{R}^N} |u_n|^r \, dx + (\lambda - 1) \int_{\mathbb{R}^N} |u|\, |u_n|^{r-1} \, dx \Big) \\
&&\leq \Big( C(r)- \lambda+\mu([s_\sharp,1])
\Big)  \limsup_{n} \int_{\mathbb{R}^N} |u_n|^r \, dx + (\lambda - 1) \limsup_{n} \int_{\mathbb{R}^N} |u|\, |u_n|^{r-1} \, dx.
\end{eqnarray*}

Being \( u_n \) equibounded in \( H^{s_\sharp}(\mathbb{R}^N) \), and thus in \( L^{2^*_{s_\sharp}}(\mathbb{R}^N) \), by the iteration argument we deduce that it is also equibounded in \( L^r(\mathbb{R}^N) \), in particular, the boundness is given by $\|u\|^2_2$ times a constant $C(r)$. 
Thus, the right-hand side is a finite quantity, and we conclude that
\begin{equation*}
u \in L^{\frac{r}{2} 2^*_{s_\sharp}}(\mathbb{R}^N),
\end{equation*}
which is the claim.\end{proof} 

We also recall the following statement, which can be seen as a generalization of Proposition~\ref{prop:HLS} and whose proof, once Proposition~\ref{L^r regularity} is established, follows as in~\cite[Proposition 4.5]{CGT}.

\begin{proposition} \label{regularity convol} Assume that~$\mu^-=0$.
Let $u \in \H$ be a solution of \eqref{problem}. 

Then, $u \in L^q(\R^N)$ for $q \in \left[2, \frac{N}{\alpha}\frac{2N}{N-2s}\right]$ and \begin{align*}
       I_\alpha * F(u) \in C_0(\R^N),
   \end{align*} that is,
   \begin{equation*}
       I_{\alpha} *F(u)\,\,\, \in \,\,\, C(\R^N)\cap L^{\infty}(\R^N)
   \end{equation*}and 
   \begin{align*}
     \lim_{|x|\to+\infty}  (I_\alpha *F(u))(x)=0.
   \end{align*}
\end{proposition}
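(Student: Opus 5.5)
The plan is to reduce the statement to a linear problem of the form \eqref{problem2} and then follow \cite[Proposition~4.5]{CGT}.

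\textbf{Step 1: rewrite the equation.} Put
$$H(x):=\frac{F(u(x))}{u(x)}, \qquad K(x):=f(u(x)),$$
with $H:=0$ where $u=0$. By $(F_2)$ and the computation in Lemma~\ref{regularity of f},
$$|H|\le C\big(|u|^{\frac{\alpha}{N}}+|u|^{\frac{\alpha+2s_\sharp}{N-2s_\sharp}}\big),\qquad |K|\le C\big(|u|^{\frac{\alpha}{N}}+|u|^{\frac{\alpha+2s_\sharp}{N-2s_\sharp}}\big).$$
Since $u\in L^2\cap L^{2^*_{s_\sharp}}$, both $H$ and $K$ lie in $L^{\frac{2N}{\alpha}}(\R^N)+L^{\frac{2N}{\alpha+2s_\sharp}}(\R^N)$.

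The equation $\mathcal L_\mu u+\beta u=(I_\alpha*(Hu))K$ has $\beta$ in place of $1$. I would handle this by rescaling, or by rerunning the proof of Proposition~\ref{L^r regularity} with $\beta$: the constant $\lambda$ is free there and $f=(\lambda-\beta)u$. Proposition~\ref{L^r regularity} then gives $u\in L^r(\R^N)$ for every $r\in\big[2,\frac{N}{\alpha}\frac{2N}{N-2s_\sharp}\big)$. This yields the integrability range in the statement, possibly up to the endpoint; I will read the endpoint as the natural closure, or obtain it by interpolation.

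\textbf{Step 2: integrability of $F(u)$ on both sides of $N/\alpha$.} By $(F_2)$,
$$|F(u)|\le C\big(|u|^{\frac{N+\alpha}{N}}+|u|^{\frac{N+\alpha}{N-2s_\sharp}}\big).$$
Using the improved integrability of $u$, I will show that $F(u)\in L^{p_1}\cap L^{p_2}$ with $p_1<\frac N\alpha<p_2$.

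\emph{Low exponent.} Take $p_1$ close to $\frac{2N}{N+\alpha}$. The first power then needs $u\in L^2$. The second needs $u\in L^{p_1\frac{N+\alpha}{N-2s_\sharp}}$, an exponent near $2^*_{s_\sharp}$, which is available.

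\emph{High exponent.} Take $p_2$ slightly above $\frac N\alpha$. This requires $u\in L^{p_2\frac{N+\alpha}{N}}$ and $u\in L^{p_2\frac{N+\alpha}{N-2s_\sharp}}$. The largest of these, roughly $\frac{N(N+\alpha)}{\alpha(N-2s_\sharp)}$, must stay below $\frac{2N^2}{\alpha(N-2s_\sharp)}$. This holds because $N+\alpha<2N$.

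\textbf{Step 3: continuity and decay of the convolution.} Split $I_\alpha=I_\alpha\chi_{B_1}+I_\alpha\chi_{B_1^c}$.
\begin{itemize}
\item $I_\alpha\chi_{B_1}\in L^{t}$ for all $t<\frac{N}{N-\alpha}$. Pair it with $F(u)\in L^{p_2}$, whose conjugate exponent is below $\frac{N}{N-\alpha}$.
\item $I_\alpha\chi_{B_1^c}\in L^{t}$ for all $t>\frac{N}{N-\alpha}$. Pair it with $F(u)\in L^{p_1}$.
\end{itemize}
Each piece is a convolution of $L^t$ and $L^{t'}$ functions with $t,t'<\infty$. By the standard lemma, such a convolution belongs to $C_0(\R^N)$: approximate both factors by $C_c$ functions and use uniform convergence. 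This gives continuity, boundedness and vanishing at infinity of $I_\alpha*F(u)$.

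\textbf{Main obstacle.} The delicate point is the exponent bookkeeping in Step 2. We need both $p_1<\frac N\alpha$ and the strict inequality at $p_2$, within the range provided by Proposition~\ref{L^r regularity}. The awkward case is when $\alpha$ is small relative to $N-2s_\sharp$, so I would check that case explicitly.
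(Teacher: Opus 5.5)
Your proposal is correct and follows essentially the argument the paper intends, since the paper defers to \cite[Proposition~4.5]{CGT}. That argument rewrites the equation in the linear form \eqref{problem2} with $H=F(u)/u$ and $K=f(u)$, applies Proposition~\ref{L^r regularity} with $\beta$ absorbed, and then combines $F(u)\in L^{p_1}(\R^N)\cap L^{p_2}(\R^N)$, $p_1<\frac N\alpha<p_2$, with the splitting of $I_\alpha$ near and away from the origin. Two small corrections: interpolation cannot give the closed endpoint $q=\frac N\alpha\frac{2N}{N-2s_\sharp}$, although neither your argument nor the paper's needs more than the half-open range for the conclusion on $I_\alpha*F(u)$; and a spatial rescaling would not remove $\beta$, because different orders $s$ scale differently, whereas dividing by $\beta$ (replacing $\mu$ by $\mu/\beta$) or rerunning the proof with $(\lambda-\beta)u$ does.
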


\subsection{A Kato-type inequality (in the weak sense)}
In order to complete the proof of Theorem~\ref{THM:LINFTY}, we will
also need a version of Kato's inequality in the general setting of this paper. The precise statement that we will use is the following:

\begin{proposition}\label{lem:kato}
Assume that~$\mu^-=0$.
Let~$\phi:\R\to[0,+\infty)$ be a~$C^\infty$ and convex function
such that~$\phi(t)=0$ for all~$t\le0$
and~$|\phi'(t)|+|\phi''(t)| \le L$ for every~$t\in\R$, for some~$L\in[0,+\infty)$.

Then, for all nonnegative functions~$v\in\H$, we have that~$\phi(v)\in\H$ and, for all nonnegative functions~$\varphi\in \H$,
\begin{equation}\label{katoineq00}\begin{split}&
\int_{[0,1]}\left(C_{N,s}\iint_{\R^{2N}}\frac{\big(\phi(v(x))-\phi(v(y))\big)
(\varphi(x)-\varphi(y))}{|x-y|^{N+2s}}\,dx\,dy\right)\,d\mu(s)\\&\qquad\le
\int_{[0,1]}\left(C_{N,s}\iint_{\R^{2N}}\frac{(v(x)-v(y))
\big(\phi'(v(x))\varphi(x)-\phi'(v(y))\varphi(y)\big)}{|x-y|^{N+2s}}\,dx\,dy\right)\,d\mu(s).\end{split}
\end{equation}
\end{proposition}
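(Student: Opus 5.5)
The proof splits into two parts: showing that $\phi(v)\in\H$, and proving the inequality pointwise in $(x,y)$ before integrating against the kernel and $\mu$.

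\emph{Membership.} Since $\phi(0)=0$ and $|\phi'|\le L$, $\phi$ is $L$-Lipschitz. Hence
\[
|\phi(v(x))-\phi(v(y))|\le L|v(x)-v(y)|,
\qquad
|\phi(v)|\le L|v|.
\]
By \eqref{seminorm_s}, for every $s$ we get $[\phi(v)]_s^2\le L^2[v]_s^2$ and $\|\phi(v)\|_2\le L\|v\|_2$. Integrating in $d\mu^+=d\mu$ (recall $\mu^-=0$) gives $\|\phi(v)\|_\mu\le L\|v\|_\mu$. To be consistent with the definition of $\H$ as a closure of $C^\infty_0$, I would use the characterization \eqref{spazioenerg}, i.e.\ finiteness of the norm. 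If that were not available, I would approximate $v$ by $C^\infty_0$ functions $v_j\to v$ in $\H$ and a.e.; then $\phi(v_j)$ is $C^1$ with compact support, hence lies in $\H$, and Fatou together with the uniform bound gives weak compactness and the conclusion. Similarly, $\phi'(v)\varphi$ should be handled so that the right-hand side of \eqref{katoineq00} makes sense. I would note that $\phi'(v)$ is bounded and Lipschitz in $v$, and that
\[
\phi'(v(x))\varphi(x)-\phi'(v(y))\varphi(y)=\phi'(v(x))\bigl(\varphi(x)-\varphi(y)\bigr)+\varphi(y)\bigl(\phi'(v(x))-\phi'(v(y))\bigr).
\]
Rather than requiring $\phi'(v)\varphi\in\H$, I would show that the integrand on the right is the sum of an absolutely integrable part and a part whose sign we control.

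\emph{Pointwise inequality.} The key step is the elementary convexity inequality
\[
\bigl(\phi(a)-\phi(b)\bigr)(p-q)\le (a-b)\bigl(\phi'(a)p-\phi'(b)q\bigr)\qquad \text{for all } a,b\in\R,\ p,q\ge0.
\]
To check it, expand the right side minus the left side:
\[
p\bigl[\phi(b)-\phi(a)+\phi'(a)(a-b)\bigr]+q\bigl[\phi(a)-\phi(b)-\phi'(b)(a-b)\bigr].
\]
Both brackets are nonnegative by convexity (the tangent line lies below the graph), and $p,q\ge0$. Applying this with $a=v(x)$, $b=v(y)$, $p=\varphi(x)$, $q=\varphi(y)$ and multiplying by $C_{N,s}|x-y|^{-N-2s}\ge0$ yields the integrand inequality. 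Note that nonnegativity of $v$ is not even needed here, only that of $\varphi$.

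\emph{Integrating (main obstacle).} The hard part is justifying integration against $dx\,dy\,d\mu(s)$, since the right-hand integrand need not be absolutely integrable a priori. The difference RHS$-$LHS is nonnegative, so its integral is well defined in $[0,+\infty]$. The left integrand is absolutely integrable by Cauchy--Schwarz, since $\phi(v),\varphi\in\H$. So the right-hand side is well defined as (LHS integral) $+$ (integral of a nonnegative function), possibly $+\infty$, and the inequality follows.

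To show the right-hand side is actually finite, I would use the decomposition above. The term $\phi'(v(x))(v(x)-v(y))(\varphi(x)-\varphi(y))$ is bounded by $L|v(x)-v(y)||\varphi(x)-\varphi(y)|$, which is integrable. For the term $\varphi(y)(v(x)-v(y))(\phi'(v(x))-\phi'(v(y)))$, monotonicity of $\phi'$ makes it nonnegative, and it is bounded by $L|\varphi(y)||v(x)-v(y)|^2$. This bound is integrable if $\varphi$ is bounded. So for general $\varphi$ I would first prove the result for the truncations $\min\{\varphi,M\}$ and pass to the limit by monotone convergence on this nonnegative term, with the other terms converging by the $\H$-continuity of the truncations. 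This gives the inequality in $\R\cup\{+\infty\}$ for general $\varphi$, and in $\R$ when $\varphi\in L^\infty$. The latter is the case used in the subsequent $L^\infty$ iteration.
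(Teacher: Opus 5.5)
Your argument is correct, and it takes a genuinely different and shorter route than the paper.

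\textbf{How the paper argues.} It proceeds by density in four steps. For $v,\varphi\in C^\infty_0$ it symmetrizes the principal-value integral into
\[
2\iint\bigl(\phi(v(x))-\phi(v(y))\bigr)\varphi(x)\,|x-y|^{-N-2s}\,dx\,dy
\]
and uses only the tangent inequality at $v(x)$. It then approximates $v$ in $\H$, using dominated convergence with an $L^2$ majorant; this needs $\varphi\in L^\infty$. Next it approximates bounded $\varphi$. Finally it truncates $\varphi$, using monotonicity of $\phi'$ to control the term $\varphi(y)(v(x)-v(y))(\phi'(v(x))-\phi'(v(y)))$. That is the same nonnegative term you isolate.

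\textbf{How your argument differs.} Your two-sided inequality weights the tangent inequalities at $v(x)$ and $v(y)$ by $\varphi(x)$ and $\varphi(y)$. It is exactly the pointwise-in-$(x,y)$ version of the paper's symmetrization. So it needs no principal values and no smoothness of $v$ or $\varphi$. The only analytic point is giving meaning to the right-hand side. Your splitting into an absolutely integrable part plus a nonnegative part does this and gives \eqref{katoineq00} in $(-\infty,+\infty]$ directly.

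\textbf{What each route buys.} Yours needs no approximation at all and does not use $v\ge0$. It uses only convexity, $\phi\in C^1$, $\phi'$ bounded and $\phi(0)=0$; the bound on $\phi''$ only serves to show finiteness when $\varphi$ is bounded. So your argument applies directly to the merely $C^1$ function $\phi$ in \eqref{defofut98jkgfmnv9876}, which would make the smoothing Lemma~\ref{lemma:p488approx} unnecessary. The paper's route follows the classical template, pointwise $\mathcal{L}_\mu\phi(v)\le\phi'(v)\mathcal{L}_\mu v$ for smooth $v$ extended by density, which is heavier here.

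\textbf{Two small remarks.} First, your final truncation step is superfluous. Once the right-hand integrand is written as integrable plus nonnegative, the inequality for a general nonnegative $\varphi\in\H$ follows at once.

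Second, your comment that the bounded case is the one used later is not accurate. In the proof of Theorem~\ref{THM:LINFTY} the test function is $\varphi=\phi_\delta(u)$, which grows linearly and is not a priori bounded. There the right-hand side is nevertheless finite, because $\phi_\delta'(u)\phi_\delta(u)\in\H$: $\phi_\delta'\phi_\delta$ is Lipschitz, since $\phi_\delta''$ has compact support, and it vanishes at $0$. So your general $(-\infty,+\infty]$ version is exactly what is needed.

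Finally, if $\mu$ charges $s=0$ or $s=1$, the kernel form must be read as $\int\phi(v)\varphi$, respectively $\int\nabla\phi(v)\cdot\nabla\varphi$. The same pointwise idea still works there, via $t\phi'(t)\ge\phi(t)$ and $\phi''(v)|\nabla v|^2\varphi\ge0$. The paper also leaves this point implicit.
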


To establish Proposition~\ref{lem:kato}, we will use multiple times
the following observation:
 
\begin{lemma}\label{udwkjbdkjkjkju5r7u58700}
Let~$w_\varepsilon$, $\psi_\varepsilon$ be sequences of functions
in~$\H$ such that
$$ \sup_{\varepsilon} \int_{[0,1]} [w_\varepsilon]^2_s\,d\mu(s)<+\infty
$$ and
$$ \lim_{\varepsilon\searrow0}
\int_{[0,1]}\,[\psi_\varepsilon-\psi]_s^2\,d\mu(s)=0,$$
for some~$\psi\in\H$.

Then,
\begin{equation*}
\lim_{\varepsilon\searrow0}\int_{[0,1]}\left(C_{N,s}\iint_{\R^{2N}}\frac{ (w_\varepsilon(x)- w_\varepsilon(y))
\big((\psi_\varepsilon(x)-\psi(x))-(\psi_\varepsilon(y)
-\psi(y)\big)}{|x-y|^{N+2s}}\,dx\,dy\right)\,d\mu(s)=0.
\end{equation*}
\end{lemma}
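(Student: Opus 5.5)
The plan is to read the double-and-measure integral as a bilinear form and finish with Cauchy--Schwarz. Since $\mu^-=0$ in this part of the paper, $\mu=\mu^+$ is a nonnegative finite measure. On $\H$, with $d\nu:=C_{N,s}\,|x-y|^{-N-2s}\,dx\,dy\,d\mu(s)$ on $[0,1]\times\R^{2N}$, set
$$\mathcal{E}(w,\psi):=\int_{[0,1]}\left(C_{N,s}\iint_{\R^{2N}}\frac{(w(x)-w(y))(\psi(x)-\psi(y))}{|x-y|^{N+2s}}\,dx\,dy\right)d\mu(s).$$
This is a symmetric, nonnegative bilinear form, and $\mathcal{E}(w,w)=\int_{[0,1]}[w]_s^2\,d\mu(s)$ by~\eqref{seminorm_s}. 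The quantity in the statement is exactly $\mathcal{E}(w_\varepsilon,\psi_\varepsilon-\psi)$, so the whole proof reduces to bounding this pairing.

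First, check that the integral is well defined. The integrand is $(w_\varepsilon(x)-w_\varepsilon(y))\,(g(x)-g(y))$ with $g:=\psi_\varepsilon-\psi$. Cauchy--Schwarz in $L^2(\nu)$ gives absolute integrability, because both $\int[w_\varepsilon]_s^2\,d\mu$ and $\int[g]_s^2\,d\mu$ are finite. Finiteness comes from the hypotheses: $w_\varepsilon,\psi_\varepsilon,\psi\in\H$, and the seminorm $\int[\cdot]_s^2\,d\mu^+$ is part of $\|\cdot\|_\mu$. Fubini is therefore allowed, and we can treat the iterated integral as a single integral against $\nu$.

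Second, apply Cauchy--Schwarz in $L^2(\nu)$:
$$\left|\mathcal{E}(w_\varepsilon,\psi_\varepsilon-\psi)\right|\le\left(\int_{[0,1]}[w_\varepsilon]_s^2\,d\mu(s)\right)^{1/2}\left(\int_{[0,1]}[\psi_\varepsilon-\psi]_s^2\,d\mu(s)\right)^{1/2}.$$
The first factor is bounded by $\sup_\varepsilon\int[w_\varepsilon]_s^2\,d\mu<+\infty$. The second factor tends to $0$ by hypothesis. Hence the product tends to $0$ as $\varepsilon\searrow0$, which is the claim.

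The only real point of care is the first step: justifying that the triple integral is a genuine Lebesgue integral on the product space, so that Cauchy--Schwarz applies there rather than separately for each fixed $s$. One could instead apply Cauchy--Schwarz for each fixed $s$ and then once more in $L^2(\mu)$; this gives the same bound. Positivity of $\mu$, which follows from $\mu^-=0$, is essential here, since with a signed measure the form $\mathcal{E}$ would not define a seminorm. Beyond that, the argument is routine.
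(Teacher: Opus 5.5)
Your proof is correct and is essentially the paper's argument. The paper also applies Cauchy--Schwarz to the integrand, but in the weighted Young form $\eta\int_{[0,1]}[w_\varepsilon]_s^2\,d\mu(s)+\eta^{-1}\int_{[0,1]}[\psi_\varepsilon-\psi]_s^2\,d\mu(s)$, letting first $\varepsilon\searrow0$ and then $\eta\searrow0$; your product form of Cauchy--Schwarz reaches the same conclusion in one step (and, as you note, both versions implicitly use $\mu\ge0$, i.e.\ $\mu^-=0$).
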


\begin{proof}
Let~$\eta\in(0,1)$.
We observe that, by the Cauchy-Schwarz inequality,
\begin{eqnarray*}&&\lim_{\varepsilon\searrow0}
\left|\int_{[0,1]}\left(C_{N,s}\iint_{\R^{2N}}\frac{(w_\varepsilon(x)-w_\varepsilon(y))\, \big((\psi_\varepsilon(x)-\psi(x))- (\psi_\varepsilon(y)-\psi(y))\big)}{|x-y|^{N+2s}}\,dx\,dy\right)\,d\mu(s)\right|
\\&&\qquad\leq\lim_{\varepsilon\searrow0}
\int_{[0,1]}\left(C_{N,s}\iint_{\R^{2N}}\frac{|w_\varepsilon(x)-w_\varepsilon(y)|\, \big|(\psi_\varepsilon(x)-\psi(x))- (\psi_\varepsilon(y)-\psi(y))\big|}{|x-y|^{N+2s}}\,dx\,dy\right)\,d\mu(s)
\\&&\qquad\leq\lim_{\varepsilon\searrow0} \eta
\int_{[0,1]}[w_\varepsilon]^2_s\,d\mu(s)
+\frac1{\eta}\int_{[0,1]}\,[\psi_\varepsilon-\psi]_s^2\,d\mu(s)\\&&\qquad
\leq
\eta\sup_\varepsilon
\int_{[0,1]}[w_\varepsilon]^2_s\,d\mu(s).
\end{eqnarray*}
Thus, sending~$\eta\searrow0$, we obtain the desired result.
\end{proof}

\begin{proof}[Proof of Proposition~\ref{lem:kato}]
Thanks to the assumptions on~$\phi$, we have that, for all~$x$, $y\in\R^N$,
$$ |\phi(v(x))-\phi(v(y))|\le |\phi'(v(x))|\,| v(x)-v(y)|\le L| v(x)-v(y)|,$$
and therefore if~$v\in\H$ then~$\phi(v)\in\H$ as well.

We now establish the inequality in~\eqref{katoineq00}. We divide the proof into several steps:
\smallskip

\noindent {\bf{Step 1: proof of~\eqref{katoineq00} when~$v$, $\varphi\in C^\infty_0(\R^N,[0,+\infty))$.}}

In this case, we can write, in the principal value sense,
\begin{eqnarray*}
&&\iint_{\R^{2N}}\frac{\big(\phi(v(x))-\phi(v(y))\big)
(\varphi(x)-\varphi(y))}{|x-y|^{N+2s}}\,dx\,dy=
2\iint_{\R^{2N}}\frac{\big(\phi(v(x))-\phi(v(y))\big)
\varphi(x)}{|x-y|^{N+2s}}\,dx\,dy\\&&\qquad
\leq2\iint_{\R^{2N}}\frac{\phi'(v(x))( v(x)- v(y))
\varphi(x)}{|x-y|^{N+2s}}\,dx\,dy
\\&&\qquad=\iint_{\R^{2N}}\frac{( v(x)- v(y))\big(\phi'(v(x))
\varphi(x)-\phi'(v(y))\varphi(y)\big)}{|x-y|^{N+2s}}\,dx\,dy.
\end{eqnarray*}
Accordingly, the desired inequality in~\eqref{katoineq00}
follows by integrating over~$[0,1]$ with respect to~$\mu$.
\smallskip

\noindent {\bf{Step 2: proof of~\eqref{katoineq00} when~$v\in \H$ and~$\varphi\in C^\infty_0(\R^N,[0,+\infty))$.}}

Let~$v_\varepsilon\in C^{\infty}_0(\R^N,[0,+\infty))$
be such that~$v_\varepsilon\to v$ in~$\H$ (and a.e. in~$\R^N$, thanks to~\eqref{aggcompatto00}).
Then, we know that
\begin{equation}\label{SPK:DLHS1}\begin{split}&
\int_{[0,1]}\left(C_{N,s}\iint_{\R^{2N}}\frac{\big(\phi(v_\varepsilon(x))-\phi(v_\varepsilon(y))\big)
(\varphi(x)-\varphi(y))}{|x-y|^{N+2s}}\,dx\,dy\right)\,d\mu(s)\\&\qquad\le
\int_{[0,1]}\left(C_{N,s}\iint_{\R^{2N}}\frac{(v_\varepsilon(x)-v_\varepsilon(y))
\big(\phi'(v_\varepsilon(x))\varphi(x)-\phi'(v_\varepsilon(y))\varphi(y)\big)}{|x-y|^{N+2s}}\,dx\,dy\right)\,d\mu(s).\end{split}
\end{equation}

By Lemma~\ref{udwkjbdkjkjkju5r7u58700} (used here with~$w_\varepsilon:=\varphi$, $\psi_\varepsilon:=\phi(v_\varepsilon)$
and~$\psi:=\phi(v)$) we have that
\begin{equation}\label{SPK:DLHS2}
\begin{split}
&\lim_{\varepsilon\searrow0}\int_{[0,1]}\left(C_{N,s}\iint_{\R^{2N}}\frac{\big(\phi(v_\varepsilon(x))-\phi(v_\varepsilon(y))\big)
(\varphi(x)-\varphi(y))}{|x-y|^{N+2s}}\,dx\,dy\right)\,d\mu(s)\\&\qquad\qquad=
\int_{[0,1]}\left(C_{N,s}\iint_{\R^{2N}}\frac{\big(\phi(v (x))-\phi(v (y))\big)
(\varphi(x)-\varphi(y))}{|x-y|^{N+2s}}\,dx\,dy\right)\,d\mu(s).
\end{split}\end{equation}

Moreover, we know that the function
$$ \Psi_\varepsilon(x,y,s):=\sqrt{C_{N,s}}\;\frac{v_\varepsilon(x)-v_\varepsilon(y)}{|x-y|^{\frac{N+2s}2}}$$
converges as~$\varepsilon\searrow0$ in~$L^2(\R^{2N}\times[0,1],\,dx\,dy\,d\mu(s))$
and therefore (see e.g.~\cite[Theorem~4.9(b)]{brezis}) there exists~$ \Psi\in L^2(\R^{2N}\times[0,1],\,dx\,dy\,d\mu(s))$ such that~$|\Psi_\varepsilon|\le\Psi$.

As a result,
\begin{eqnarray*}&& C_{N,s}\frac{
|v_\varepsilon(x)-v_\varepsilon(y)|
\big|\phi'(v_\varepsilon(x))-\phi'(v_\varepsilon(y)) \big|\varphi(x)}{|x-y|^{N+2s}}
\le
C_{N,s}\frac{L
|v_\varepsilon(x)-v_\varepsilon(y)|^2\varphi(x)}{|x-y|^{N+2s}}\\&&\quad
\le L\|\varphi\|_{L^\infty(\R^N)}| \Psi_\varepsilon(x,y,s)|^2
\le L\|\varphi\|_{L^\infty(\R^N)}| \Psi(x,y,s)|^2\in L^1(\R^{2N}\times[0,1],\,dx\,dy\,d\mu(s)).\end{eqnarray*}

This observation allows us to use the Dominated Convergence Theorem and conclude that
\begin{equation}\label{PKJSMPD5m386b0-29mn098mnG.7bX2v6b}
\begin{split}&\lim_{\varepsilon\searrow0}
\int_{[0,1]}\left(C_{N,s}\iint_{\R^{2N}}\frac{(v_\varepsilon(x)-v_\varepsilon(y))
\big(\phi'(v_\varepsilon(x))-\phi'(v_\varepsilon(y))\big)\varphi(x)}{|x-y|^{N+2s}}\,dx\,dy\right)\,d\mu(s)
\\&\qquad=
\int_{[0,1]}\left(C_{N,s}\iint_{\R^{2N}}\frac{(v(x)-v(y))
\big(\phi'(v(x))-\phi'(v(y))\big)\varphi(x)}{|x-y|^{N+2s}}\,dx\,dy\right)\,d\mu(s).
\end{split}
\end{equation}

Additionally,
\begin{eqnarray*}
&&
C_{N,s}\;\frac{|v_\varepsilon(x)-v_\varepsilon(y)|
|\varphi(x)-\varphi(y)|\,|\phi'(v_\varepsilon(y))|}{|x-y|^{N+2s}}\le\sqrt{C_{N,s}}\,L
|\Psi_\varepsilon(x,y,s)|
\;\frac{
|\varphi(x)-\varphi(y)|}{|x-y|^{\frac{N+2s}2}}\\&&\qquad\le\sqrt{C_{N,s}}\,L
|\Psi(x,y,s)|
\;\frac{
|\varphi(x)-\varphi(y)|}{|x-y|^{\frac{N+2s}2}}\in L^1(\R^{2N}\times[0,1],\,dx\,dy\,d\mu(s)).
\end{eqnarray*}

Hence, the Dominated Convergence Theorem in this case yields that
\begin{equation*}
\begin{split}&\lim_{\varepsilon\searrow0}
\int_{[0,1]}\left(C_{N,s}\iint_{\R^{2N}}\frac{(v_\varepsilon(x)-v_\varepsilon(y))
(\varphi(x)-\varphi(y)) \phi'(v_\varepsilon(y))}{|x-y|^{N+2s}}\,dx\,dy\right)\,d\mu(s)
\\&\qquad=
\int_{[0,1]}\left(C_{N,s}\iint_{\R^{2N}}\frac{(v(x)-v(y))
(\varphi(x)-\varphi(y)) \phi'(v(y))}{|x-y|^{N+2s}}\,dx\,dy\right)\,d\mu(s)
.\end{split}
\end{equation*}

{F}rom this and~\eqref{PKJSMPD5m386b0-29mn098mnG.7bX2v6b} we arrive at\begin{equation*}
\begin{split}&\lim_{\varepsilon\searrow0}
\int_{[0,1]}\left(C_{N,s}\iint_{\R^{2N}}\frac{(v_\varepsilon(x)-v_\varepsilon(y))
\big(\phi'(v_\varepsilon(x))\varphi(x)-\phi'(v_\varepsilon(y))\varphi(y)\big)}{|x-y|^{N+2s}}\,dx\,dy\right)\,d\mu(s)\\&\qquad=\lim_{\varepsilon\searrow0}
\int_{[0,1]}\left(C_{N,s}\iint_{\R^{2N}}\frac{(v_\varepsilon(x)-v_\varepsilon(y))
\big(\phi'(v_\varepsilon(x))-\phi'(v_\varepsilon(y))\big)\varphi(x)}{|x-y|^{N+2s}}\,dx\,dy\right)\,d\mu(s)
\\&\qquad\qquad+
\int_{[0,1]}\left(C_{N,s}\iint_{\R^{2N}}\frac{(v_\varepsilon(x)-v_\varepsilon(y))
(\varphi(x)-\varphi(y)) \phi'(v_\varepsilon(y))}{|x-y|^{N+2s}}\,dx\,dy\right)\,d\mu(s)
\\&\qquad=
\int_{[0,1]}\left(C_{N,s}\iint_{\R^{2N}}\frac{(v(x)-v(y))
\big(\phi'(v(x))-\phi'(v(y))\big)\varphi(x)}{|x-y|^{N+2s}}\,dx\,dy\right)\,d\mu(s)
\\&\qquad\qquad+
\int_{[0,1]}\left(C_{N,s}\iint_{\R^{2N}}\frac{(v(x)-v(y))
(\varphi(x)-\varphi(y)) \phi'(v(y))}{|x-y|^{N+2s}}\,dx\,dy\right)\,d\mu(s)
\\&\qquad=\int_{[0,1]}\left(C_{N,s}\iint_{\R^{2N}}\frac{(v(x)-v(y))
\big(\phi'(v(x))\varphi(x)-\phi'(v(y))\varphi(y)\big)}{|x-y|^{N+2s}}\,dx\,dy\right)\,d\mu(s)
.\end{split}
\end{equation*}

We combine this information and~\eqref{SPK:DLHS2} to pass~\eqref{SPK:DLHS1}
to the limit, thus completing the proof of this step.

\smallskip

\noindent {\bf{Step 3: proof of~\eqref{katoineq00} when~$v\in \H$ and~$\varphi\in \H\cap L^\infty(\R^N)$.}}

In this case, by density we can find~$\varphi_\varepsilon\in C^\infty_0(\R^N,[0,+\infty))$
such that~$\|\varphi_\varepsilon\|_{L^\infty(\R^N)}\leq \|\varphi\|_{L^\infty(\R^N)}$ and~$\varphi_\varepsilon$ converges to~$\varphi$ in~$\H$ as~$\varepsilon\searrow0$. 
By Step~2, we know that~\eqref{katoineq00} holds true for~$\varphi_\varepsilon$.

Moreover, we use Lemma~\ref{udwkjbdkjkjkju5r7u58700} with~$w_\varepsilon:=\phi(v)$, $\psi_\varepsilon:=\varphi_\varepsilon$
and~$\psi:=\varphi$ and we infer that
\begin{equation}\label{vbcxnui37654yudhgxbgwsdf}\begin{split}&\lim_{\varepsilon\searrow0}
\int_{[0,1]}\left(C_{N,s}\iint_{\R^{2N}}\frac{\big(\phi(v(x))-\phi(v(y))\big)
(\varphi_\varepsilon(x)-\varphi_\varepsilon(y))}{|x-y|^{N+2s}}\,dx\,dy\right)\,d\mu(s)
\\&\qquad\qquad
=\int_{[0,1]}\left(C_{N,s}\iint_{\R^{2N}}\frac{\big(\phi(v(x))-\phi(v(y))\big)
(\varphi(x)-\varphi(y))}{|x-y|^{N+2s}}\,dx\,dy\right)\,d\mu(s).
\end{split}\end{equation}

Furthermore,
\begin{equation}\label{udwkjbdkjkjkju5r7u5870}\begin{split}&
\Big|(v(x)-v(y))
\big(\phi'(v(x))\varphi(x)-\phi'(v(y))\varphi(y)\big)
-(v(x)-v(y))
\big(\phi'(v(x))\varphi_\varepsilon(x)-\phi'(v(y))\varphi_\varepsilon(y)\big)\Big|\\
&\leq|v(x)-v(y)|\,
\Big|\phi'(v(x))(\varphi(x)-\varphi_\varepsilon(x))-\phi'(v(y))(\varphi(y)-\varphi_\varepsilon(y))\Big|\\
&\leq|v(x)-v(y)|\,|\phi'(v(x))|\Big|(\varphi(x)-\varphi_\varepsilon(x))- (\varphi(y)-\varphi_\varepsilon(y))\Big|
\\&\qquad+|v(x)-v(y)|\,|
\phi'(v(x))-\phi'(v(y))|\, |\varphi(y)-\varphi_\varepsilon(y)|\\&\leq L
|v(x)-v(y)|\, \Big|(\varphi(x)-\varphi_\varepsilon(x))- (\varphi(y)-\varphi_\varepsilon(y))\Big|
+L|v(x)-v(y)|^2\,|\varphi(y)-\varphi_\varepsilon(y)|.
\end{split}\end{equation}

We also remark that~$|\varphi(y)-\varphi_\varepsilon(y)|\le 2\|\varphi\|_{L^\infty(\R^N)}$,
hence, by the Dominated Convergence Theorem, we infer that
$$ \lim_{\varepsilon\searrow0}
\int_{[0,1]}\left(C_{N,s}\iint_{\R^{2N}}\frac{L|v(x)-v(y)|^2\,|\varphi(y)-\varphi_\varepsilon(y)|}{
|x-y|^{N+2s}}\,dx\,dy\right)\,d\mu(s)=0.$$
Also, exploiting Lemma~\ref{udwkjbdkjkjkju5r7u58700} with~$w_\varepsilon:=v$ we obtain that
\begin{equation*} \begin{split}&\lim_{\varepsilon\searrow0}
\int_{[0,1]}\left(C_{N,s}\iint_{\R^{2N}}\frac{ | v(x)- v(y)|\,\Big| (\varphi(x)-\varphi_\varepsilon(x))- (\varphi(y)-\varphi_\varepsilon(y))\Big|}{|x-y|^{N+2s}}\,dx\,dy\right)\,d\mu(s)
=0.\end{split}\end{equation*}
The last two displays, together with~\eqref{udwkjbdkjkjkju5r7u5870}, give that
\begin{equation*}\begin{split}&\lim_{\varepsilon\searrow0}
\int_{[0,1]}\left(C_{N,s}\iint_{\R^{2N}}\frac{(v(x)-v(y))
\big(\phi'(v(x))\varphi_\varepsilon(x)-\phi'(v(y))\varphi_\varepsilon(y)\big)}{|x-y|^{N+2s}}\,dx\,dy\right)\,d\mu(s)
\\&\qquad\qquad
=\int_{[0,1]}\left(C_{N,s}\iint_{\R^{2N}}\frac{\big(\phi(v(x))-\phi(v(y))\big)
(\varphi(x)-\varphi(y))}{|x-y|^{N+2s}}\,dx\,dy\right)\,d\mu(s).
\end{split}\end{equation*}

This and~\eqref{vbcxnui37654yudhgxbgwsdf} entail that we can pass to the limit the inequality for~$\varphi_\varepsilon$ and
complete the proof of this step.

\smallskip

\noindent {\bf{Step 4: proof of~\eqref{katoineq00} when~$v$, $\varphi\in \H$.}}

For all~$m\in\mathbb{N}$, let
$$ \varphi_m(x):=\begin{cases}
\varphi(x) &{\mbox{ if }} \varphi(x)\le m,\\
m &{\mbox{ if }} \varphi(x)> m,\end{cases}$$ and we observe that~$\varphi_m$ converges to~$\varphi$
a.e. in~$\R^N$ as~$m\to+\infty$.

Furthermore, we point out that
$$ |\varphi_m(x)-\varphi_m(y)|\le
|\varphi(x)-\varphi(y)|.$$
Therefore, by the Dominated Convergence Theorem,
\begin{equation}\label{bvncxui58743uygfsjhg1234567jgkfd}\begin{split}&\lim_{m\to+\infty}
\int_{[0,1]}\left(C_{N,s}\iint_{\R^{2N}}\frac{\big(\phi(v(x))-\phi(v(y))\big)
(\varphi_m(x)-\varphi_m(y))}{|x-y|^{N+2s}}\,dx\,dy\right)\,d\mu(s)\\&\qquad\qquad=
\int_{[0,1]}\left(C_{N,s}\iint_{\R^{2N}}\frac{\big(\phi(v(x))-\phi(v(y))\big)
(\varphi(x)-\varphi(y))}{|x-y|^{N+2s}}\,dx\,dy\right)\,d\mu(s)
\end{split}\end{equation}
and
\begin{equation}
\label{bvncxui58743uygfsjhg1234567jgkfd3}
\begin{split}&\lim_{m\to+\infty}
\int_{[0,1]}\left(C_{N,s}\iint_{\R^{2N}}\frac{(v(x)-v(y)) \phi'(v(x))(\varphi_m(x)-\varphi_m (y))}{|x-y|^{N+2s}}\,dx\,dy\right)\,d\mu(s)\\&\qquad\qquad=
\int_{[0,1]}\left(C_{N,s}\iint_{\R^{2N}}\frac{(v(x)-v(y)) \phi'(v(x))(\varphi(x)-\varphi(y))}{|x-y|^{N+2s}}\,dx\,dy\right)\,d\mu(s).
\end{split}\end{equation}

Moreover, by the convexity of~$\phi$, we know that, for all~$a$, $b\in\R$,
$$ (a-b)\big(\phi'(a)-\phi'(b)\big)\geq0$$
and thus
\begin{eqnarray*}&&
\int_{[0,1]}\left(C_{N,s}\iint_{\R^{2N}}\frac{(v(x)-v(y))
\big(\phi'(v(x)) -\phi'(v(y))\big)\varphi_m(y) }{|x-y|^{N+2s}}\,dx\,dy\right)\,d\mu(s)
\\&&\qquad\leq 
\int_{[0,1]}\left(C_{N,s}\iint_{\R^{2N}}\frac{(v(x)-v(y))
\big(\phi'(v(x)) -\phi'(v(y))\big)\varphi(y) }{|x-y|^{N+2s}}\,dx\,dy\right)\,d\mu(s)
.
\end{eqnarray*}

As a consequence of this fact and~\eqref{bvncxui58743uygfsjhg1234567jgkfd3}, we have that
\begin{equation}
\label{bvncxui58743uygfsjhg1234567jgkfd2}
\begin{split}&\lim_{m\to+\infty}
\int_{[0,1]}\left(C_{N,s}\iint_{\R^{2N}}\frac{(v(x)-v(y))
\big(\phi'(v(x))\varphi_m(x)-\phi'(v(y))\varphi_m(y)\big)}{|x-y|^{N+2s}}\,dx\,dy\right)\,d\mu(s)
\\&\qquad=\lim_{m\to+\infty}
\int_{[0,1]}\left(C_{N,s}\iint_{\R^{2N}}\frac{(v(x)-v(y))
\phi'(v(x)) (\varphi_m(x)- \varphi_m(y) )}{|x-y|^{N+2s}}\,dx\,dy\right)\,d\mu(s)
\\&\qquad\qquad+
\int_{[0,1]}\left(C_{N,s}\iint_{\R^{2N}}\frac{(v(x)-v(y))
\big(\phi'(v(x)) -\phi'(v(y))\big)\varphi_m(y) }{|x-y|^{N+2s}}\,dx\,dy\right)\,d\mu(s)
\\&\qquad\leq \lim_{m\to+\infty}
\int_{[0,1]}\left(C_{N,s}\iint_{\R^{2N}}\frac{(v(x)-v(y))
\phi'(v(x)) (\varphi_m(x)- \varphi_m(y) )}{|x-y|^{N+2s}}\,dx\,dy\right)\,d\mu(s)
\\&\qquad\qquad+
\int_{[0,1]}\left(C_{N,s}\iint_{\R^{2N}}\frac{(v(x)-v(y))
\big(\phi'(v(x)) -\phi'(v(y))\big)\varphi(y) }{|x-y|^{N+2s}}\,dx\,dy\right)\,d\mu(s)\\
&\qquad=
\int_{[0,1]}\left(C_{N,s}\iint_{\R^{2N}}\frac{(v(x)-v(y))
\phi'(v(x)) (\varphi(x)- \varphi(y) )}{|x-y|^{N+2s}}\,dx\,dy\right)\,d\mu(s)
\\&\qquad\qquad+
\int_{[0,1]}\left(C_{N,s}\iint_{\R^{2N}}\frac{(v(x)-v(y))
\big(\phi'(v(x)) -\phi'(v(y))\big)\varphi(y) }{|x-y|^{N+2s}}\,dx\,dy\right)\,d\mu(s)\\
\\&\qquad=\int_{[0,1]}\left(C_{N,s}\iint_{\R^{2N}}\frac{(v(x)-v(y))
\big(\phi'(v(x))\varphi(x)-\phi'(v(y))\varphi(y)\big)}{|x-y|^{N+2s}}\,dx\,dy\right)\,d\mu(s).
\end{split}\end{equation}

By Step~3, we know that~\eqref{katoineq00} holds true for~$v$ and~$\varphi_m$. This fact, together with~\eqref{bvncxui58743uygfsjhg1234567jgkfd} and~\eqref{bvncxui58743uygfsjhg1234567jgkfd2}, gives the desired result.
\end{proof}

\subsection{A useful approximation result}
In order to prove Theorem~\ref{THM:LINFTY}, one of the main ingredients is to exploit
Proposition~\ref{lem:kato} with a specific choice of~$\phi$. Namely,
given~$\beta > 1$ and~$T > 1$, we define the function
\begin{equation}\label{defofut98jkgfmnv9876}
\phi(t) := 
\begin{cases} 
0, & \text{if } t \le 0, \\
t^\beta, & \text{if } 0 < t < T, \\
\beta T^{\beta-1}(t - T) + T^\beta, & \text{if } t \ge T.
\end{cases}
\end{equation}
We observe that~$\phi$ is convex, but only~$C^1$, thus we cannot employ
Proposition~\ref{lem:kato} straight away.
To overcome this issue, we now present an approximation argument.

\begin{lemma}\label{lemma:p488approx}
Let~$\phi$ be as in~\eqref{defofut98jkgfmnv9876}.

Then, there exists a sequence of~$C^\infty$ and convex functions~$\phi_\delta:\R\to[0,+\infty)$ such that
\begin{eqnarray*}
&&\phi_\delta(t)=0  \quad\text{for all } t \le 0,\\
&&{\mbox{the support of~$\phi''_\delta$ is compact,}}\\
&&\phi_\delta\le\phi\qquad{\mbox{and}}\qquad 0\le \phi_\delta'\le\phi'\\
&&{\mbox{and }} \quad \lim_{\delta\searrow0}\phi_\delta(t)=\phi(t). 
\end{eqnarray*}
\end{lemma}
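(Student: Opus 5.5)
We construct $\phi_\delta$ by mollifying $\phi''$, which is a nonnegative function with a jump at $t=T$, and then integrating twice from $0$. Concretely, $\phi'' = \beta(\beta-1)t^{\beta-2}\chi_{(0,T)}$, which is nonnegative and integrable on $(0,T)$ since $\beta>1$. The singularity at $0$ when $\beta<2$ is integrable, though $\phi''$ is unbounded there. Since we need $\phi_\delta\equiv 0$ on $(-\infty,0]$ and $\phi_\delta\le\phi$, we must mollify in a way that pushes mass to the right. So we pick a nonnegative mollifier $\rho_\delta$ supported in $[-\delta,0]$; then $(g*\rho_\delta)(t)=\int g(t-\tau)\rho_\delta(\tau)\,d\tau$ only samples $g$ at points $t-\tau\ge t$. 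That is the wrong direction. Instead we use the shift-to-the-right version $g_\delta(t):=\int g(t-\tau)\rho_\delta(\tau)\,d\tau$ with $\rho_\delta$ supported in $[\delta,2\delta]$. Then $g_\delta(t)=0$ for $t\le\delta$, and $g_\delta$ is smooth, nonnegative and supported in $[\delta,T+2\delta]$, which is compact.

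Define
\[
\phi_\delta'(t):=\int_{-\infty}^t g_\delta(\sigma)\,d\sigma,\qquad \phi_\delta(t):=\int_{-\infty}^t\phi_\delta'(\sigma)\,d\sigma .
\]
Then $\phi_\delta\in C^\infty$, it is convex since $\phi_\delta''=g_\delta\ge0$, $\phi_\delta=0$ on $(-\infty,0]$, $\phi_\delta''$ has compact support and $\phi_\delta'\ge0$. For the comparison $\phi_\delta'\le\phi'$, write
\[
\phi_\delta'(t)=\int\rho_\delta(\tau)\int_{-\infty}^{t-\tau}g(\sigma)\,d\sigma\,d\tau=\int\rho_\delta(\tau)\,\phi'(t-\tau)\,d\tau .
\]
This holds because $\phi'(s)=\int_{-\infty}^s\phi''$, which is valid since $\phi'$ is absolutely continuous: $\phi'(t)=\beta t^{\beta-1}$ on $(0,T)$, is constant afterwards, and is continuous at $0$ and $T$ because $\phi\in C^1$. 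Since $\phi'$ is nondecreasing and $\tau\ge0$, we get $\phi'(t-\tau)\le\phi'(t)$, hence $\phi_\delta'\le\phi'$. Integrating from $-\infty$, where both vanish, gives $\phi_\delta\le\phi$. Similarly $\phi_\delta(t)=\int\rho_\delta(\tau)\phi(t-\tau)\,d\tau$, and continuity of $\phi$ yields $\phi_\delta(t)\to\phi(t)$ pointwise; in fact the convergence is locally uniform, with $|\phi_\delta(t)-\phi(t)|\le 2\delta\,\sup_{[t-2\delta,t]}\phi'$.

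The only delicate point is the exchange of integrals together with the fact that $\phi''$ is unbounded near $0$ when $\beta<2$. It is harmless because $\phi''\in L^1_{loc}$, so Fubini applies, and smoothness of $g_\delta$ comes from $\rho_\delta\in C^\infty_c$. It is also worth noting that $\phi_\delta'$ and $\phi_\delta''$ are bounded, since $|\phi_\delta'|\le\beta T^{\beta-1}$ and $g_\delta$ is smooth with compact support. Hence $\phi_\delta$ satisfies the hypotheses of Proposition~\ref{lem:kato} (with some $L$ depending on $\delta$), which is presumably how the lemma will be used.
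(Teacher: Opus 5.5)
Your proof is correct and is essentially the paper's argument. The paper sets $\phi_\delta(t)=\int_{-\delta}^{\delta}\phi(t-r-2\delta)\eta_\delta(r)\,dr$, i.e.\ it convolves $\phi$ with a mollifier supported to the right of the origin, and gets $\phi_\delta\le\phi$ and $\phi_\delta'\le\phi'$ from the monotonicity of $\phi$ and $\phi'$ exactly as you do; your detour through mollifying $\phi''$ and integrating twice lands on the same object $\phi_\delta=\rho_\delta*\phi$, and it shows a bit more directly that $\phi_\delta''$ is supported in $[\delta,T+2\delta]$, whereas the paper instead observes that $\phi_\delta$ is affine for $t\ge 4T$.
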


\begin{proof}
Let~$\eta\in C^\infty_0((-1,1), [0,+\infty))$ be even and such that
$$ \int_{-1}^1\eta(r)\,dr=1.$$
For any~$\delta\in(0,1)$, let
\begin{eqnarray*} \eta_\delta(x)&:=&\frac1{\delta}\eta\left(\frac{x}{\delta}\right)\\
{\mbox{and }}\quad
\phi_\delta(t)&:=&\int_{-\infty}^{+\infty}\phi(r-2\delta)\eta_\delta(t-r)\,dr
=\int_{-\delta}^\delta \phi(t-r-2\delta)\eta_\delta(r)\,dr.\end{eqnarray*}
Then, $\phi_\delta\in C^\infty(\R,[0+\infty))$, $\phi_\delta$ is convex and
$$ \lim_{\delta\searrow0}\phi_\delta(t)=\phi(t).$$

Now, we observe that, if~$t\leq0$ and~$r\in(-\delta,\delta)$,
$$ t-r-2\delta\leq -r-2\delta\leq -\delta$$
and therefore~$\phi(t-r-2\delta)=0$. As a consequence, for all~$t\leq0$,
\begin{equation}\label{ncer9784234567}
\phi_\delta(t)=\int_{-\delta}^\delta \phi(t-r-2\delta)\eta_\delta(r)\,dr=0.
\end{equation}

Furthermore, if~$t\geq 4T$ and~$r\in(-\delta,\delta)$
$$ t-r-2\delta\geq 4T-3\delta\geq T,$$
thus~$\phi( t-r-2\delta)=\beta T^{\beta-1}(t-r-2\delta - T) + T^\beta$. As a result, for all~$t\geq 4T$, 
\begin{eqnarray*}
\phi_\delta(t)&=&\int_{-\delta}^\delta \Big( \beta T^{\beta-1}(t-r-2\delta - T) + T^\beta\Big)\eta_\delta(r)\,dr\\&=&
\beta T^{\beta-1} (t-2\delta - T)+
T^\beta.
\end{eqnarray*}
This and~\eqref{ncer9784234567} entails that the support of~$\phi''_\delta$ is contained
in~$[0,\max\{4T,4\}]$, and therefore it is compact, as desired.

We also point out that, since~$\phi$ is monotone non-decreasing,
\begin{eqnarray*}
\phi_\delta(t)=\int_{-\delta}^\delta \phi(t-r-2\delta)\eta_\delta(r)\,dr\le 
\int_{-\delta}^\delta \phi(t-\delta)\eta_\delta(r)\,dr= \phi(t-\delta)\le  \phi(t).
\end{eqnarray*}
Also, since~$\phi'$ is monotone non-decreasing,
\begin{eqnarray*}
\phi'_\delta(t)=\int_{-\delta}^\delta \phi'(t-r-2\delta)\eta_\delta(r)\,dr\le 
\int_{-\delta}^\delta \phi'(t-\delta)\eta_\delta(r)\,dr= \phi'(t-\delta)\le  \phi'(t).
\end{eqnarray*}
These considerations complete the proof.
\end{proof}

\subsection{Completion of the proof of Theorem~\ref{THM:LINFTY}}
With the work done so far, we
can now complete the proof of the boundness of non-negative weak solutions. 

\begin{proof}[Proof of Theorem~\ref{THM:LINFTY}]
By Proposition~\ref{regularity convol}, we know that~$a := I_\alpha * F(u) \in L^\infty(\mathbb{R}^N)$.
Thus, $u$ satisfies the following non-autonomous problem with a local nonlinearity
\[
\mathcal{L}_\mu u + \mu u = a(x) f(u)\quad \text{in } \mathbb{R}^N,
\]
with~$a$ bounded. 

In particular, we can write
\begin{equation}\label{cnwejkh3725fguescbj}
\mathcal{L}_\mu u = g(x, u) := -\mu u + a(x) f(u) \quad \text{in } \mathbb{R}^N,
\end{equation}
where, in light of~$(F_2)$,
\[
|g(x, t)| \leq \mu |t| + C \|a\|_\infty \left( |t|^{\frac{\alpha}{N}} + |t|^{\frac{\alpha + 2s_\sharp}{N - 2s_\sharp}} \right).
\]

Now we set
\begin{equation}\label{csfeuwytuigejlsw765}
\gamma := \max\left\{ 1, \frac{\alpha + 2s_\sharp}{N - 2s_\sharp} \right\} \in [1, 2^*_{s_\sharp}-1)
\end{equation}
and we point out that
\begin{equation}\label{csfeuwytuigejlsw7652}
|g(x, t)| \leq C(1 + |t|^\gamma).
\end{equation}

Let~$\beta > 1$, $T > 1$, and~$\phi$ be as in~\eqref{defofut98jkgfmnv9876}.
Thanks to Lemma~\ref{lemma:p488approx},
we can find a convex
function~$\phi_\delta\in C^\infty(\R)$ 
that satisfies the assumptions
of Proposition~\ref{lem:kato}.

Accordingly, by Proposition~\ref{lem:kato},
if~$v\in\H$ then also~$\phi_\delta(v)\in\H $
and, for any nonnegative function~$\varphi\in\H$,
\begin{equation}\label{12345qwerxcv5tr6y3uktfkejwjiuh}\begin{split}&
\int_{[0,1]}\left(\iint_{\R^{2N}}\frac{\big(\phi_\delta(u(x))-\phi_\delta(u(y)) \big)(\varphi(x)-\varphi(y))}{|x-y|^{N+2s}}\,dx\,dy
\right)\,d\mu(s)\\&\quad\le \int_{[0,1]}\left(\iint_{\R^{2N}}\frac{(u(x)-u(y))\big(\phi'_\delta(u(x))\varphi(x)
-\phi'_\delta(u(y))\varphi(y) \big)}{|x-y|^{N+2s}}\,dx\,dy
\right)\,d\mu(s).\end{split}
\end{equation}

Now, by the Sobolev embedding (see e.g.~\cite[Theorem~1.1]{CT2004}),
we have that, for all~$s\in[s_\sharp, \overline{s}_\sharp]$,
$$ \|\phi_\delta(u)\|_{2^*_{s}}^2 
\le C \,C_{N,s}\iint_{\R^{2N}}\frac{\big(\phi_\delta(u(x))-\phi_\delta(u(y)) \big)^2}{|x-y|^{N+2s}}\,dx\,dy, $$ where~$C>0$ depends only on~$N$.

Therefore, thanks to the assumption\footnote{We point out that if we knew the stronger assumption \label{foriuermnv876543}
that~$\mu(\{s_\sharp\})>0$, then~\eqref{76546tJJJbvncmxiuw54398438yt} would simplify to
\begin{equation*}\begin{split}
\|\phi_\delta(u)\|_{2^*_{s_\sharp}}^2& \le 
C \iint_{\R^{2N}}\frac{\big(\phi_\delta(u(x))-\phi_\delta(u(y)) \big)^2}{|x-y|^{N+2s_\sharp}}\,dx\,dy
\\&
\le \frac{C}{ \mu( \{s_\sharp\})}
\int_{[0,1]}C_{N,s}
\left(\iint_{\R^{2N}}\frac{\big(\phi_\delta(u(x))-\phi_\delta(u(y)) \big)^2}{|x-y|^{N+2s}}\,dx\,dy\right)\,d\mu(s).\end{split}\end{equation*}
Therefore, from now on the proof of Theorem~\ref{THM:LINFTY} would carry over
essentially as the ones of~\cite[Proposition~2.2]{Barrios} or~\cite[Proposition~5.1.1]{DMV}}
in~\eqref{ass9567daskjytrTRYJ},
\begin{equation}\label{76546tJJJbvncmxiuw54398438yt}\begin{split}
&\inf_{s\in [s_\sharp, \overline{s}_\sharp]}
\|\phi_\delta(u)\|_{2^*_{s}}^2 \le 
C \inf_{s\in [s_\sharp, \overline{s}_\sharp]} C_{N,s}
 \iint_{\R^{2N}}\frac{\big(\phi_\delta(u(x))-\phi_\delta(u(y)) \big)^2}{|x-y|^{N+2s}}\,dx\,dy
\\&\qquad
\le \frac{C}{ \mu( [s_\sharp, \overline{s}_\sharp])}
\int_{[0,1]}C_{N,s}
\left(\iint_{\R^{2N}}\frac{\big(\phi_\delta(u(x))-\phi_\delta(u(y)) \big)^2}{|x-y|^{N+2s}}\,dx\,dy\right)\,d\mu(s),\\
\\&\qquad
\le C
\int_{[0,1]}C_{N,s}
\left(\iint_{\R^{2N}}\frac{\big(\phi_\delta(u(x))-\phi_\delta(u(y)) \big)^2}{|x-y|^{N+2s}}\,dx\,dy\right)\,d\mu(s),
\end{split}\end{equation} up to renaming~$C$, that now depends on~$N$,
$s_\sharp$, $\overline{s}_\sharp$, and~$\mu$.

As a consequence, using~\eqref{12345qwerxcv5tr6y3uktfkejwjiuh}
with~$\varphi:=\phi_\delta (u)$, we conclude that
\begin{eqnarray*}&&\inf_{ s\in [s_\sharp, \overline{s}_\sharp]}
\|\phi_\delta(u)\|_{2^*_{s}}^2\\&&\;
\leq C\int_{[0,1]}C_{N,s}\left(
\iint_{\R^{2N}}\frac{(u(x)-u(y))\big(\phi'_\delta(u(x))\phi_\delta(x)
-\phi'_\delta(u(y))\phi_\delta(y) \big)}{|x-y|^{N+2s}}\,dx\,dy
\right)\,d\mu(s)
.\end{eqnarray*}
We now use the fact that~$u$ is a weak solution of~\eqref{cnwejkh3725fguescbj} and we obtain that
$$\inf_{s\in [s_\sharp, \overline{s}_\sharp]}
\|\phi_\delta(u)\|_{2^*_{s}}^2\leq C
\int_{\R^{N}} g(x,u(x))\,\phi'_\delta(u(x))\phi_\delta(u(x))\,dx.$$
Thus, recalling~\eqref{csfeuwytuigejlsw765} and~\eqref{csfeuwytuigejlsw7652},
$$\inf_{s\in [s_\sharp, \overline{s}_\sharp]}
\|\phi_\delta(u)\|_{2^*_{s}}^2\leq C
\int_{\R^{N}} \left(1+u^{2^*_{s_\sharp}-1}(x)\right)\,\phi'_\delta(u(x))\phi_\delta(u(x))\,dx.$$


Now, in light of Lemma~\ref{lemma:p488approx}, we know that~$0\leq\phi_\delta\leq\phi$
and~$0\leq\phi_\delta'\leq\phi'$. Therefore,
\begin{equation}\label{ncxmueiwrhfsdhvh7654300}
\inf_{s\in [s_\sharp, \overline{s}_\sharp]}
\|\phi_\delta(u)\|_{2^*_{s}}^2\leq C
\int_{\R^{N}} \left(1+u^{2^*_{s_\sharp}-1}(x)\right)\,\phi' (u(x))\phi(u(x))\,dx.
\end{equation}

We claim that
\begin{equation}\label{ncxmueiwrhfsdhvh76543}
\inf_{s\in [s_\sharp, \overline{s}_\sharp]}
\|\phi(u)\|_{2^*_{s}}^2
\le
\lim_{\delta\searrow0}
\inf_{s\in [s_\sharp, \overline{s}_\sharp]}
\|\phi_\delta(u)\|_{2^*_{s}}^2
.\end{equation}
Indeed, we know that for all~$j\in\mathbb{N}$ there exists~$s_j\in [s_\sharp, \overline{s}_\sharp]$ such that
$$ 
\|\phi_\delta(u)\|_{2^*_{s_j}}^2
-\frac1j\leq \inf_{s\in [s_\sharp, \overline{s}_\sharp]}
\|\phi_\delta(u)\|_{2^*_{s}}^2.$$
Hence,
from the pointwise convergence of~$\phi_\delta$ to~$\phi$ (recall Lemma~\ref{lemma:p488approx})
and the Fatou's Lemma, we conclude that
\begin{eqnarray*}&& \lim_{\delta\searrow0}\inf_{s\in [s_\sharp, \overline{s}_\sharp]}
\|\phi_\delta(u)\|_{2^*_{s}}^2\ge 
\lim_{\delta\searrow0}\|\phi_\delta(u)\|_{2^*_{s_j}}^2
-\frac1j\ge
\|\phi(u)\|_{2^*_{s_j}}^2
-\frac1j
\ge \inf_{s\in [s_\sharp, \overline{s}_\sharp]}
\|\phi(u)\|_{2^*_{s}}^2-\frac1j.
\end{eqnarray*}
Sending~$j\to+\infty$ gives the claim in~\eqref{ncxmueiwrhfsdhvh76543}.

From~\eqref{ncxmueiwrhfsdhvh7654300} and~\eqref{ncxmueiwrhfsdhvh76543},
we infer that
$$\inf_{s\in [s_\sharp, \overline{s}_\sharp]}
\|\phi(u)\|_{2^*_{s}}^2\leq C
\int_{\R^{N}} \left(1+u^{2^*_{s_\sharp}-1}(x)\right)\,\phi' (u(x))\phi(u(x))\,dx.$$

Hence, using the properties~$\phi'(u)\phi(u) \le \beta u^{2\beta-1}$ and $u\phi'(u) \le \beta \phi(u)$, we have
\begin{equation}
\label{eq:5.1.3}\begin{split}\inf_{s\in [s_\sharp, \overline{s}_\sharp]}
\left( \int_{\mathbb{R}^N} (\phi(u))^{2^*_{  s }} \, dx \right)^{2/2^*_{ s }}& \le C\beta\left( \int_{\mathbb{R}^N} u^{2\beta-1} \, dx + \int_{\mathbb{R}^N} (\phi(u))^2 u^{2^*_{s_\sharp}-2} \, dx \right)
.\end{split}
\end{equation}
Notice that the last integral is finite for every $T\ge1$, since
\begin{equation}\label{nbvcx58934fusdk3edfv}\begin{split}&
\int_{\mathbb{R}^N} (\phi(u))^2 u^{2^*_{s_\sharp}-2} \, dx
=\int_{\{u<T\}} u^{2\beta} u^{2^*_{s_\sharp}-2} \, dx
+\beta^2 T^{2\beta-2}\int_{\{u\geq T\}} u^2 u^{2^*_{s_\sharp}-2} \, dx\\&\qquad\leq
T^{2\beta-2}\int_{\{u<T\}} u^{2^*_{s_\sharp}} \, dx
+\beta^2 T^{2\beta-2}\int_{\{u\geq T\}}u^{2^*_{s_\sharp}} \, dx<+\infty.
\end{split}\end{equation}

We now use~\eqref{eq:5.1.3} with~$\beta:=\beta_1$, where
\begin{equation}\label{beayujd31}\beta_{1} := \frac{2^*_{s_\sharp} + 1}{2}>1
.\end{equation}
In this way,
\begin{equation}
\label{pqecm230n4ymp76ik203rptghln}
\begin{split}\inf_{s\in [s_\sharp, \overline{s}_\sharp]}
\left( \int_{\mathbb{R}^N} (\phi(u))^{2^*_{  s }} \, dx \right)^{2/2^*_{ s }}& \le C\beta_{1}\left( \int_{\mathbb{R}^N} u^{2^*_{s_\sharp}} \, dx + \int_{\mathbb{R}^N} (\phi(u))^2 u^{2^*_{s_\sharp}-2} \, dx \right)
.\end{split}
\end{equation}
From this and~\eqref{nbvcx58934fusdk3edfv} it follows that
\begin{equation*}
\inf_{s\in [s_\sharp, \overline{s}_\sharp]}
\left( \int_{\mathbb{R}^N} (\phi(u))^{2^*_{  s }} \, dx \right)^{2/2^*_{ s }}<+\infty.
\end{equation*}

Let now~$s_j\in[s_\sharp, \overline{s}_\sharp]$ be a minimizing sequence for the term on the left-hand side
of~\eqref{eq:5.1.3}, say such that
\begin{equation}\label{qoeuirj2ptyh5y}
\inf_{s\in [s_\sharp, \overline{s}_\sharp]}
\left( \int_{\mathbb{R}^N} (\phi(u))^{2^*_{  s }} \, dx \right)^{2/2^*_{ s }}\ge
\left( \int_{\mathbb{R}^N} (\phi(u))^{2^*_{  s_j }} \, dx \right)^{2/2^*_{ s_j }}-\frac1j.
\end{equation}

Also, let~$R>1$ and use the H\"older inequality with exponents~$r:= 2^*_{s_j }/2$
and~$ r':=2^*_{ s_j}/(2^*_{s_j}-2)$ to see that
\begin{equation}\label{oq0852dh293bv5n}
\begin{split}
&\int_{\mathbb{R}^N} (\phi(u))^2 u^{2^*_{s_\sharp}-2} \, dx
=\int_{\{u\le R\}} (\phi(u))^2 u^{2^*_{s_\sharp}-2} \, dx
+\int_{\{u> R\}} (\phi(u))^2 u^{2^*_{s_\sharp}-2} \, dx\\&\qquad\leq
R^{2^*_{s_\sharp}-1}
\int_{\{u\le R\}}\frac{ (\phi(u))^2 }{u} \, dx
+\left(\int_{\{u> R\}} (\phi(u))^{2^*_{ s_j}} \, dx\right)^{2/2^*_{ s_j }}
\left(\int_{\{u> R\}}  
u^{ \frac{2^*_{s_\sharp}s_\sharp}{s_j}} \, dx\right)^{ (2^*_{s_j }-2)/2^*_{ s_j} }.
\end{split}\end{equation}

In addition, if~$u> R>1$ then~$u^{ \frac{2^*_{s_\sharp}s_\sharp}{s_j}}\le
u^{2^*_{s_\sharp}}$ and therefore
$$ \int_{\{u> R\}}  
u^{ \frac{2^*_{s_\sharp}s_\sharp}{s_j}} \, dx\le\int_{\{u> R\}}  
u^{ 2^*_{s_\sharp}} \, dx.$$
Since the latter quantity is the tail of a convergent integral, we can suppose that
it is smaller than~$1$,
provided~$R$ is large enough, and therefore,
since~$(2^*_{s_j }-2)/2^*_{ s_j}\ge(2^*_{s_\sharp }-2)/2^*_{ s_\sharp}$,
$$\left(\int_{\{u> R\}}  
u^{ \frac{2^*_{s_\sharp}s_\sharp}{s_j}} \, dx\right)^{ (2^*_{s_j }-2)/2^*_{ s_j} }\le
\left(\int_{\{u> R\}}  
u^{2^*_{s_\sharp}} \, dx\right)^{ (2^*_{s_j }-2)/2^*_{ s_j} }
\le\left(\int_{\{u> R\}}  
u^{2^*_{s_\sharp}} \, dx\right)^{ (2^*_{s_\sharp }-2)/2^*_{ s_\sharp} }.$$
We now take~$R$ sufficiently large such that the latter quantity is less than~$\frac1{2C\beta_1}$ and we plug this information into~\eqref{oq0852dh293bv5n},
finding that
$$\int_{\mathbb{R}^N} (\phi(u))^2 u^{2^*_{s_\sharp}-2} \, dx\leq
R^{2^*_{s_\sharp}-1}
\int_{\{u\le R\}}\frac{ (\phi(u))^2 }{u} \, dx
+
\frac1{2C\beta_1}\left(\int_{\{u> R\}} (\phi(u))^{2^*_{ s_j}} \, dx\right)^{2/2^*_{ s_j }}.$$

We insert this into~\eqref{pqecm230n4ymp76ik203rptghln} and we conclude that
\begin{equation*}
\begin{split}&\inf_{s\in [s_\sharp, \overline{s}_\sharp]}
\left( \int_{\mathbb{R}^N} (\phi(u))^{2^*_{  s }} \, dx \right)^{2/2^*_{ s }}\\&\qquad \le C\beta_{1}\int_{\mathbb{R}^N} u^{2^*_{s_\sharp}} \, dx + 
C\beta_1R^{2^*_{s_\sharp}-1}
\int_{\{u\le R\}}\frac{ (\phi(u))^2 }{u} \, dx
+
\frac1{2}\left(\int_{\{u> R\}} (\phi(u))^{2^*_{ s_j}} \, dx\right)^{2/2^*_{ s_j }}
.\end{split}
\end{equation*}

We now use~\eqref{qoeuirj2ptyh5y} to deduce that
\begin{equation*}
\begin{split}&\inf_{s\in [s_\sharp, \overline{s}_\sharp]}
\left( \int_{\mathbb{R}^N} (\phi(u))^{2^*_{  s }} \, dx \right)^{2/2^*_{ s }}\\&\qquad \le C\beta_{1}\int_{\mathbb{R}^N} u^{2^*_{s_\sharp}} \, dx + 
C\beta_1R^{2^*_{s_\sharp}-1}
\int_{\{u\le R\}}\frac{ (\phi(u))^2 }{u} \, dx
+
\frac1{2}\inf_{s\in [s_\sharp, \overline{s}_\sharp]}
\left( \int_{\mathbb{R}^N} (\phi(u))^{2^*_{  s }} \, dx \right)^{2/2^*_{ s }}+\frac1{2j}.
\end{split}
\end{equation*}
Hence, reabsorbing a term to the left-hand side,
\begin{equation*}
\begin{split}&\frac12\inf_{s\in [s_\sharp, \overline{s}_\sharp]}
\left( \int_{\mathbb{R}^N} (\phi(u))^{2^*_{  s }} \, dx \right)^{2/2^*_{ s }}\\&\qquad \le C\beta_{1}\int_{\mathbb{R}^N} u^{2^*_{s_\sharp}} \, dx + 
C\beta_1R^{2^*_{s_\sharp}-1}
\int_{\{u\le R\}}\frac{ (\phi(u))^2 }{u} \, dx
+\frac1{2j}.
\end{split}
\end{equation*}
As a result, sending~$j\to+\infty$,
\begin{equation*}
\begin{split}&\frac12\inf_{s\in [s_\sharp, \overline{s}_\sharp]}
\left( \int_{\mathbb{R}^N} (\phi(u))^{2^*_{  s }} \, dx \right)^{2/2^*_{ s }} \le C\beta_{1}\int_{\mathbb{R}^N} u^{2^*_{s_\sharp}} \, dx + 
C\beta_1R^{2^*_{s_\sharp}-1}
\int_{\{u\le R\}}\frac{ (\phi(u))^2 }{u} \, dx.
\end{split}
\end{equation*}

Consequently, since~$\phi(u)\le u^\beta$ (and then, in this case,
by~\eqref{beayujd31}, $\phi(u)\le u^{\frac{2^*_{s_\sharp} + 1}{2}}$), we find that
\begin{equation}\label{sjoqwlneVISKNDpqwXt34-a}
\begin{split}&\frac12\inf_{s\in [s_\sharp, \overline{s}_\sharp]}
\left( \int_{\mathbb{R}^N} (\phi(u))^{2^*_{  s }} \, dx \right)^{2/2^*_{ s }} \le C\beta_{1}\int_{\mathbb{R}^N} u^{2^*_{s_\sharp}} \, dx + 
C\beta_1R^{2^*_{s_\sharp}-1}
\int_{\R^N}u^{2^*_{s_\sharp}}\, dx.
\end{split}
\end{equation}

We now claim that, for all~$\beta>1$,
\begin{equation}\label{vcxkueriusdfj78043}
\lim_{T\to+\infty}\inf_{s\in [s_\sharp, \overline{s}_\sharp]}
\left( \int_{\mathbb{R}^N} (\phi(u))^{2^*_{  s }} \, dx \right)^{2/2^*_{ s }}\ge
\inf_{s\in [s_\sharp, \overline{s}_\sharp]}\left( \int_{\mathbb{R}^N} u^{2^*_{  s }\beta} \, dx \right)^{2/2^*_{ s}}.\end{equation}
Indeed, we recall that
$$ \lim_{T\to+\infty}\phi(u)=u^{\beta}$$
and therefore, by Fatou's Lemma,
\begin{eqnarray*}
&&\lim_{T\to+\infty}\inf_{s\in [s_\sharp, \overline{s}_\sharp]}
\left( \int_{\mathbb{R}^N} (\phi(u))^{2^*_{  s }} \, dx \right)^{2/2^*_{ s }}\ge
\lim_{T\to+\infty}
\left( \int_{\mathbb{R}^N} (\phi(u))^{2^*_{  s_j }} \, dx \right)^{2/2^*_{ s_j }}-\frac1j\\
&&\qquad\ge\left( \int_{\mathbb{R}^N} u^{2^*_{  s_j }\beta} \, dx \right)^{2/2^*_{ s_j }}-\frac1j\ge
\inf_{s\in [s_\sharp, \overline{s}_\sharp]}\left( \int_{\mathbb{R}^N} u^{2^*_{  s }\beta} \, dx \right)^{2/2^*_{ s}}-\frac1j.
\end{eqnarray*}
Sending~$j\to\infty$ we thus obtain{vcxkueriusdfj78043}.

The observation in~\eqref{vcxkueriusdfj78043} (used with~$\beta:=\beta_1$), in tandem with~\eqref{sjoqwlneVISKNDpqwXt34-a}, yields that
\begin{equation*}
\begin{split}&\inf_{s\in [s_\sharp, \overline{s}_\sharp]}\left( \int_{\mathbb{R}^N} u^{2^*_{  s }\beta_1} \, dx \right)^{2/2^*_{ s}} \le 2C\beta_{1}\int_{\mathbb{R}^N} u^{2^*_{s_\sharp}} \, dx + 2C\beta_1R^{2^*_{s_\sharp}-1}
\int_{\R^N}u^{2^*_{s_\sharp}}\, dx<+\infty.
\end{split}
\end{equation*}

Let now~$\beta>\beta_1$ in~\eqref{eq:5.1.3}.
Using the fact that~$\phi(u)\le u^\beta$ and sending~$T\to+\infty$ (recall~\eqref{vcxkueriusdfj78043}), we obtain that
\begin{equation}\label{r438whasjbhszmSDFGHJ000}
\inf_{s\in [s_\sharp, \overline{s}_\sharp]}
\left( \int_{\mathbb{R}^N} u^{2^*_{  s }\beta} \, dx \right)^{2/2^*_{ s }} \le C\beta\left( \int_{\mathbb{R}^N} u^{2\beta-1} \, dx + \int_{\mathbb{R}^N}  u^{2\beta+2^*_{s_\sharp}-2} \, dx \right)
.\end{equation}

Furthermore, we can write $u^{2\beta-1} = u^a u^b$ with 
\[
a := \frac{2^*_{s_\sharp}(2^*_{s_\sharp}-1)}{2(\beta-1)}  \qquad \text{and} \qquad b := 2\beta - 1 - a.
\]
Notice that, since $\beta > \beta_1$, then $0 < a, b < 2^*_{s_\sharp}$. Hence, applying the
Young's inequality with exponents $r := 2^*_{s_\sharp}/a$ and $r' := 2^*_{s_\sharp}/(2^*_{s_\sharp} - a)$,
we find that
\begin{eqnarray*}
\int_{\mathbb{R}^N} u^{2\beta-1} \, dx &\le& \frac{a}{2^*_{s_\sharp}} \int_{\mathbb{R}^N} u^{2^*_{s_\sharp}} \, dx + \frac{2^*_{s_\sharp}-a}{2^*_{s_\sharp}} \int_{\mathbb{R}^N} u^{\frac{2^*_{s_\sharp} b}{2^*_{s_\sharp}-a}} \, dx \\
&\le&  \int_{\mathbb{R}^N} u^{2^*_{s_\sharp}} \, dx + \int_{\mathbb{R}^N} u^{2\beta + 2^*_{s_\sharp} - 2} \, dx \\
&\le &
C \left( 1 + \int_{\mathbb{R}^N} u^{2\beta + 2^*_{s_\sharp} - 2} \, dx \right),
\end{eqnarray*}
with $C > 0$ independent of $\beta$. 

Plugging this into \eqref{r438whasjbhszmSDFGHJ000}, we infer that
\begin{equation*}
\inf_{s\in [s_\sharp, \overline{s}_\sharp]}
\left( \int_{\mathbb{R}^N} u^{2^*_{  s }\beta} \, dx \right)^{2/2^*_{ s }} \le 
C\beta \left( 1 + \int_{\mathbb{R}^N} u^{2\beta + 2^*_{s_\sharp} - 2} \, dx \right),
\end{equation*}
with $C$ changing from line to line, but remaining independent of~$\beta$.
Therefore, there exists~$s_\beta\in [s_\sharp, \overline{s}_\sharp]$ such that
\begin{equation*}
\left( \int_{\mathbb{R}^N} u^{2^*_{  s_0 }\beta} \, dx \right)^{2/2^*_{ s_0 }} \le 
C\beta \left( 1 + \int_{\mathbb{R}^N} u^{2\beta + 2^*_{s_\sharp} - 2} \, dx \right),
\end{equation*} up to renaming~$C$ once more.

As a consequence, for all~$\beta>\beta_1$,
\begin{equation}\label{5.1.8} 
\left( 
1 + \int_{\mathbb{R}^N} u^{2^*_{s_\beta} \beta} \, dx \right)^{\frac{1}{2^*_{s_\beta }(\beta-1)}} \le (C\beta)^{\frac{1}{2(\beta-1)}} \left( 1 + \int_{\mathbb{R}^N} u^{2\beta + 2^*_{s_\sharp} - 2} \, dx \right)^{\frac{1}{2(\beta-1)}}.
\end{equation}

We now perform an iterative argument, 
by defining a sequence~$\beta_{m}$ such that~$\beta_1$ is as in~\eqref{beayujd31} and, for all~$m\ge1$,
\begin{equation*}
2\beta_{m+1} + 2^*_{s_\sharp} - 2 = 2^*_{s_{\beta_m}} \beta_m.
\end{equation*}
In this way,
$$ 2\beta_{m+1} + 2^*_{s_\sharp} - 2 \ge 2^*_{s_\sharp} \beta_m$$and therefore
\begin{equation}\label{APIskjqw-0pri34-p5y56y}
\beta_{m+1} - 1 \ge \left( \frac{2^*_{s_\sharp}}{2} \right)^m (\beta_1 - 1).
\end{equation}

With this notation, we use~\eqref{5.1.8} with~$\beta:=\beta_{m+1}$ and we see that
\begin{equation}\label{qowufj340yujh6py.e3r4t}
\left( 
1 + \int_{\mathbb{R}^N} u^{2^*_{s_{\beta_{m+1}}} \beta_{m+1}} \, dx\right)^{\frac{1}{2^*_{s_{\beta_{m+1}}}(\beta_{m+1}-1)}} 
\le (C\beta_{m+1})^{\frac{1}{2(\beta_{m+1}-1)}} \left( 1 + \int_{\mathbb{R}^N} u^{2^*_{s_{\beta_m}} \beta_m} \, dx \right)^{\frac{1}{2(\beta_{m+1}-1)}}.
\end{equation}

It is also useful to observe that
$$ 2(\beta_{m+1}-1)= 2^*_{s_{\beta_m}} \beta_m-2^*_{s_\sharp}
\ge2^*_{s_{\beta_m}} \beta_m-2^*_{s_{\beta_m}}=2^*_{s_{\beta_m}}( \beta_m-1)
$$
and thus
$$\frac1{2(\beta_{m+1}-1)}\le\frac1{2^*_{s_{\beta_m}}( \beta_m-1)}.$$

On this account,
$$ \left( 1 + \int_{\mathbb{R}^N} u^{2^*_{s_{\beta_m}} \beta_m} \, dx \right)^{\frac{1}{2(\beta_{m+1}-1)}}\le
 \left( 1 + \int_{\mathbb{R}^N} u^{2^*_{s_{\beta_m}} \beta_m} \, dx \right)^{\frac{1}{2^*_{s_{\beta_m}}( \beta_m-1)}}.$$
By combining this inequality and~\eqref{qowufj340yujh6py.e3r4t}, we conclude that
\begin{equation*}
\left( 
1 + \int_{\mathbb{R}^N} u^{2^*_{s_{\beta_{m+1}}} \beta_{m+1}} \, dx\right)^{\frac{1}{2^*_{s_{\beta_{m+1}}}(\beta_{m+1}-1)}} 
\le (C\beta_{m+1})^{\frac{1}{2(\beta_{m+1}-1)}} \left( 1 + \int_{\mathbb{R}^N} u^{2^*_{s_{\beta_m}} \beta_m} \, dx \right)^{\frac{1}{2^*_{s_{\beta_m}}( \beta_m-1)}}.
\end{equation*}

Hence, defining $C_{m+1} := C\beta_{m+1}$ and
\begin{equation*}
A_m := \left( 1 + \int_{\mathbb{R}^N} u^{2^*_{s_{\beta_m}} \beta_m} \, dx \right)^{\frac{1}{2^*_{s_{\beta_m}}( \beta_m-1)}},
\end{equation*}
we have that
$$ A_{m+1}\le C_{m+1}^{\frac{1}{2(\beta_{m+1}-1)}}\,A_m.$$
We can thereby iterate this inequality and obtain that 
\begin{equation}\label{0qoudjweoti0-45pui-04qvn65o}
A_{m+1} \le \prod_{k=2}^{m+1} C_k^{\frac{1}{2(\beta_k-1)}} A_1 .
\end{equation}

We also remark that, in light of~\eqref{APIskjqw-0pri34-p5y56y},
there exists~$m_0\ge2$ such that, for all~$m\ge m_0$,
$$
\beta_{m} - 1 \ge \frac{\beta_m}2.
$$
As a result,
$$\sum_{k=m_0}^m\frac{\ln(C\beta_k)}{2(\beta_k-1)}
\le\sum_{k=m_0}^m\frac{\ln(C\beta_k)}{\beta_k}\le\sum_{k=m_0}^{+\infty}\frac{\ln(C\beta_k)}{\beta_k}=:C',
$$and we stress that~$C'$ is finite, thanks to~\eqref{APIskjqw-0pri34-p5y56y}.

On this account,
$$\sum_{k=2}^m\frac{\ln(C\beta_k)}{2(\beta_k-1)}\le C'',$$
for some~$C''\in(0,+\infty)$ and thus
\begin{eqnarray*}
\prod_{k=2}^{m+1} C_k^{\frac{1}{2(\beta_k-1)}} =
\prod_{k=2}^{m+1} (C\beta_k)^{\frac{1}{2(\beta_k-1)}} =
\prod_{k=2}^{m+1} \exp\left(\frac{\ln(C\beta_k)}{2(\beta_k-1)}\right)=
\exp\left(\sum_{k=2}^{m+1} \frac{\ln(C\beta_k)}{2(\beta_k-1)}\right)\le e^{C''}=:C_0,
\end{eqnarray*}
with~$C_0\in(0,+\infty)$.

This and~\eqref{0qoudjweoti0-45pui-04qvn65o} give that
$$ A_{m+1} \le C_0 A_1=:C_1 < +\infty.$$
In particular,
\begin{equation}\label{0wepiou23p4oyi-=65koplo.iy} \left( \int_{\mathbb{R}^N} u^{2^*_{s_{\beta_{m+1}}} \beta_{m+1}} \, dx \right)^{\frac{1}{2^*_{s_{\beta_{m+1}}}( \beta_{m+1}-1)}}\le C_1.\end{equation}

We also stress that~$2^*_{s_{\beta_{m+1}}}\in[2^*_{s_\sharp},2^*_{\overline{s}_\sharp}]$ and therefore, in view of~\eqref{APIskjqw-0pri34-p5y56y},
$$ \lim_{m\to+\infty}2^*_{s_{\beta_{m+1}}} \beta_{m+1}=+\infty.$$
This allows us to pass to the limit as~$m\to+\infty$ in~\eqref{0wepiou23p4oyi-=65koplo.iy} and obtain that~$\|u\|_{L^\infty(\R^N)}\le C_1$,
as desired.
\end{proof}

\section{The Pohozaev identity and proof of Theorem \ref{pohozaev theorem}} 
\label{sec_Pohozaev}

The goal of this section is to establish Theorem~\ref{pohozaev theorem}.
For this, we recall the following useful notation and statements.

For any~$ s\in\R$ and any vector field~$X \in C^1_0 (\R^N,\R^N)$, 
we define, for $x, y\in \R^N$, with~$x \neq y$, 
\begin{equation}\label{DFGHJbcxm84thdkslkfi7uit}
{\mathcal{K}_{X}^s(x,y)}:= \frac{\big(\mathrm{div} (X)\big)(x) + \big(\mathrm{div} (X)\big)(y)}{2}- {\frac{N+2s}{2}}\frac{(X(x)-X(y))\cdot(x-y)}{|x-y|^2}.\end{equation}


\begin{proposition}\cite[Proposition 6.5]{CGT1}\label{Pohozaev right term}
Let $\alpha \in (0,N)$ and let $H \in \operatorname{Lip}_{\mathrm{loc}}(\mathbb{R}^N) \cap L^\infty(\mathbb{R}^N)$ be such that
\[
(I_\alpha \ast |H|)\,|H| \in L^1(\mathbb{R}^N) \qquad \text{and} \qquad (I_\alpha \ast |H|)\,|\nabla H| \in L^1_{\mathrm{loc}}(\mathbb{R}^N).
\]
Let $X \in C_0^1(\mathbb{R}^N, \mathbb{R}^N)$.

Then,
\begin{equation*}
\iint_{\mathbb{R}^{2N}} I_\alpha(x - y)\, H(x)\, H(y)\, \mathcal{K}^{-\frac{\alpha}{2}}_X(x,y)\, dx\, dy = -\int_{\mathbb{R}^N} (I_\alpha \ast H)(x)\, \nabla H(x) \cdot X(x)\, dx.
\end{equation*}
\end{proposition}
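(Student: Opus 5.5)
The plan is to write the kernel $\mathcal{K}^{-\alpha/2}_X$ as a symmetrized divergence acting on $I_\alpha$, and then integrate by parts in each variable separately. The singularity on the diagonal is handled by excising a neighbourhood of $\{x=y\}$ and letting its width tend to zero.

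\textbf{Step 1: algebraic identity.} For $s=-\alpha/2$ we have $N+2s=N-\alpha$ and $I_\alpha(z)=C_\alpha|z|^{-(N+2s)}$. Hence $\nabla I_\alpha(z)=-(N-\alpha)I_\alpha(z)\,z/|z|^2$, and for $x\neq y$
$$\big(X(x)\cdot\nabla_x+X(y)\cdot\nabla_y\big)I_\alpha(x-y)=-(N-\alpha)I_\alpha(x-y)\frac{(X(x)-X(y))\cdot(x-y)}{|x-y|^2}.$$
Inserting this into the definition~\eqref{DFGHJbcxm84thdkslkfi7uit} gives
$$I_\alpha(x-y)\,\mathcal{K}^{-\frac\alpha2}_X(x,y)=\tfrac12\,\mathrm{div}_x\big(X(x)I_\alpha(x-y)\big)+\tfrac12\,\mathrm{div}_y\big(X(y)I_\alpha(x-y)\big).$$

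\textbf{Step 2: integrability of the left-hand side.} Since $X\in C^1_0$, we have $|(X(x)-X(y))\cdot(x-y)|\le\|DX\|_\infty|x-y|^2$. It follows that $|\mathcal{K}^{-\alpha/2}_X|\le C$ on $\R^{2N}$. The integrand is therefore bounded by $C\,I_\alpha(x-y)|H(x)||H(y)|$, which lies in $L^1(\R^{2N})$ because $(I_\alpha\ast|H|)|H|\in L^1(\R^N)$. By dominated convergence the left-hand side equals
$$\lim_{\varepsilon\to0}\iint_{\{|x-y|>\varepsilon\}}I_\alpha(x-y)H(x)H(y)\mathcal{K}^{-\frac\alpha2}_X(x,y)\,dx\,dy.$$

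\textbf{Step 3: integration by parts on $\{|x-y|>\varepsilon\}$.} Using Step 1, fix $y$ and integrate by parts in $x$ over $\{|x-y|>\varepsilon\}$; this is legitimate because $H$ is locally Lipschitz and $X$ has compact support. The $x$-term produces
$$-\iint_{\{|x-y|>\varepsilon\}}I_\alpha(x-y)H(y)\nabla H(x)\cdot X(x)\,dx\,dy$$
plus a boundary term on the sphere $|x-y|=\varepsilon$. By the symmetry $x\leftrightarrow y$, the $y$-term produces the same bulk contribution plus its own boundary term. Adding them, the two halves of the bulk contributions combine into
$$-\iint_{\{|x-y|>\varepsilon\}}I_\alpha(x-y)H(y)\nabla H(x)\cdot X(x)\,dx\,dy.$$
This converges to $-\int_{\R^N}(I_\alpha\ast H)\nabla H\cdot X\,dx$ as $\varepsilon\to0$, by dominated convergence. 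The domination uses $(I_\alpha\ast|H|)|\nabla H|\in L^1_{\rm loc}$ together with the compact support of $X$.

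\textbf{Main obstacle: the boundary terms.} The two sphere terms combine, after the change of variables $z=x-y$, into an expression of the form
$$\tfrac12\int_{\R^N}\int_{|z|=\varepsilon}I_\alpha(z)H(y+z)H(y)\big(X(y+z)-X(y)\big)\cdot\tfrac{z}{|z|}\,d\sigma(z)\,dy,$$
up to sign. The key point is that the normals have opposite orientation in the two integrations, so what appears is the difference $X(y+z)-X(y)$ and not $X$ itself. This difference is $O(\varepsilon)$ and vanishes unless $y$ lies in an $\varepsilon$-neighbourhood of $\mathrm{supp}\,X$. Using $H\in L^\infty$, the integral is bounded by
$$C\|H\|_\infty^2\,\varepsilon^{\alpha-N}\cdot\varepsilon\cdot\varepsilon^{N-1}\cdot|\mathrm{supp}\,X+B_1|=C\varepsilon^{\alpha}\to0.$$
Getting the signs right so that this cancellation actually occurs is the delicate point, and I would verify it carefully. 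Passing to the limit $\varepsilon\to0$ in Step 3 then yields the claimed identity.
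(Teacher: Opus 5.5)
Your proof is correct, and it is essentially the argument the paper relies on. The paper does not reprove this statement but imports it from \cite{CGT1}; in the smooth case of Proposition~\ref{pohozaev superposition} it carries out the same excision scheme: a divergence identity for the kernel, integration by parts on $\{|x-y|>\varepsilon\}$, and symmetrization of the sphere terms. The sign point you flag works out as you hoped. The outward normals are $-\frac{x-y}{\varepsilon}$ in the $x$-integration and $+\frac{x-y}{\varepsilon}$ in the $y$-integration. After parametrizing both sphere integrals by $(y,z)$ with $x=y+z$, they combine to
\[
-\frac12\int_{\R^N}\int_{\partial B_\varepsilon} I_\alpha(z)\,H(y+z)\,H(y)\,\big(X(y+z)-X(y)\big)\cdot\frac{z}{\varepsilon}\,d\mathcal{H}^{N-1}_z\,dy ,
\]
which is $O(\varepsilon^\alpha)$ by your estimate.
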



We also discuss
the analytical properties of the normalization constant~$C_{N,s}$, defined in~\eqref{constant_definition}:

\begin{lemma}\label{rem:normalization_constant_analysis}
$C_{N,s}$ is a smooth, non-negative and uniformly bounded function for all~$s \in [0, 1]$.
\end{lemma}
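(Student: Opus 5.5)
The plan is to work directly from the explicit formula \eqref{constant_definition},
$$C_{N,s}=\frac{2^{2s}\, s\,\Gamma\left(\frac{N+2s}{2}\right)}{\pi^{N/2}\,\Gamma(1-s)},$$
and to treat the three factors $2^{2s}$, $\Gamma\left(\frac{N+2s}{2}\right)$ and $s/\Gamma(1-s)$ separately on $s\in[0,1]$. The only delicate point is the endpoint $s=1$, where $\Gamma(1-s)$ has a pole. There the formula must be read as its limit, which is consistent with $\lim_{s\to1}(-\Delta)^s=-\Delta$.

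First, $s\mapsto 2^{2s}=e^{2s\ln 2}$ is real-analytic and positive. Next, since $N\ge3$, we have $\frac{N+2s}{2}\in\left[\frac N2,\frac N2+1\right]\subset(0,+\infty)$. On this interval $\Gamma$ is smooth and positive, so $s\mapsto\Gamma\left(\frac{N+2s}{2}\right)$ is smooth, positive and bounded on $[0,1]$ by continuity on a compact set.

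For the key factor, we use that $1/\Gamma$ is an entire function with zeros only at the nonpositive integers. Hence $s\mapsto \frac{1}{\Gamma(1-s)}$ extends to a smooth (indeed real-analytic) function on all of $\R$. It is strictly positive for $s<1$, since $1-s>0$, and vanishes at $s=1$, since $1/\Gamma(0)=0$. Thus $s\mapsto \frac{s}{\Gamma(1-s)}$ is smooth on $[0,1]$, nonnegative, and zero exactly at $s=0$ and $s=1$.

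Multiplying these facts, $C_{N,s}$ (understood through this extension at $s=1$) is smooth on $[0,1]$. It is non-negative, positive on $(0,1)$, and vanishes at $s=0$ and $s=1$. Being continuous on the compact interval $[0,1]$, it is uniformly bounded:
$$\sup_{s\in[0,1]}C_{N,s}\le \frac{4\,\max_{[N/2,N/2+1]}\Gamma}{\pi^{N/2}}\,\max_{s\in[0,1]}\frac{s}{\Gamma(1-s)}<+\infty.$$

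The main obstacle, which is mild, is justifying the behaviour at $s=1$. I would cite the reflection formula $\frac{1}{\Gamma(1-s)}=\frac{\Gamma(s)\sin(\pi s)}{\pi}$, which makes smoothness, positivity on $(0,1)$ and the zero at $s=1$ all explicit.
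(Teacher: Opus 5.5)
Your proof is correct and follows essentially the same route as the paper. Both arguments treat the factors separately, observe that $\frac{N+2s}{2}$ stays where $\Gamma$ is smooth and positive, and handle the only delicate point $s=1$ through the simple pole of $\Gamma$ at $0$: the paper uses the Laurent expansion of $\Gamma(1-s)$ near $s=1$, while you use the entireness of $1/\Gamma$ (or the reflection formula), which gives smoothness at $s=1$ slightly more directly.
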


\begin{proof}
We recall from~\eqref{constant_definition} that \begin{equation*}
    C_{N,s} = \frac{2^{2s}s \Gamma \left(\frac{N+2s}{2}\right)}{\pi^{N/2} \Gamma(1-s)}.
\end{equation*}

The Euler Gamma function $\Gamma (z)$ is  holomorphic in the right half-plane $\{\text{Re}(z) > 0\}$, with simple poles located at non-positive integers~$z \in \{0, -1, -2, \dots\}$.

We observe that, for all~$s \in [0, 1]$, 
$$ \frac{N+2s}{2}\in\left[\frac{N}{2}, \frac{N+2}{2}\right],$$
which is strictly contained in the region of holomorphy. 

Moreover, when~$s$ is close to~$1$,
$$ \Gamma(1-s)=\frac1{1-s}-\gamma+O(1-s),$$ being~$\gamma$ the Euler-Mascheroni constant.

These considerations complete the proof of Lemma~\ref{rem:normalization_constant_analysis}.
\end{proof}

The following result is pivotal towards the proof of the desired Pohozaev equality
in Theorem~\ref{pohozaev theorem}.

\begin{proposition}
\label{pohozaev superposition}
Assume that~$\mu_-=0$. 
Let $u \in \H $ be such that, for all~$R>0$,
\begin{equation}\label{bcwiytr84y8tyihsWYU8765}
 {\iiint_{[0,1]\times B_R\times\R^N}} C_{N,s}\frac{|2u(x)-u(x+y)-u(x-y)|}{|y|^{N+2s}}\,d\mu(s)\,dx\,dy<+\infty.
\end{equation}

Suppose also that~$u\in W^{1,p}_{loc}(\R^N)$ for some~$p\in(1,+\infty)$ and that
\begin{equation}\label{bcwiytr84y8tyihsWYU876500}
\mathcal{L}_{\mu}u\in L^q_{loc}(\R^N),\end{equation} being~$q$ the conjugate exponent of~$p$, namely~$\frac1p+\frac1q=1$.

Let~$X \in C^1_0 (\R^N,\R^N)$.

Then,
\begin{equation}\label{Pohozaev on superposition operator}  \begin{split}&
\int_{(0,1)}\left(C_{N,s}\iint_{\R^{2N}}
\frac{|u(x)-u(y)|^2}{|x-y|^{N+2s}} \mathcal{K}_{X}^s(x,y)\, dx\, dy\right)\,d\mu(s) 
+ \frac{\mu(\{0\})}2\int_{\R^N} u^2 (x)\,\mathrm{div} X(x)\, dx\\&\qquad+\mu(\{1\})\left(\frac12\int_{\R^N} |\nabla u(x)|^2 \,\mathrm{div} X(x)\, dx
-\int_{\R^N} \partial_j u(x)\partial_i u(x) \partial_j X_i(x)\, dx
\right)\\&\qquad\qquad
= - \int_{\R^N} \mathcal{L}_\mu u(x)\, \nabla u \cdot X\, dx,\end{split}\end{equation}
 where~$\mathcal{K}_{X}^s$ is defined in~\eqref{DFGHJbcxm84thdkslkfi7uit}.
 \end{proposition}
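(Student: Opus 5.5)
The plan is to reduce the statement to a fixed-$s$ Pohozaev-type identity for the fractional Laplacian, integrate it in $s$ against $\mu$ on $(0,1)$, and treat the atoms at $s=0$ and $s=1$ separately by local computations. Fix $X\in C^1_0(\R^N,\R^N)$ with support in $B_R$. For each $s\in(0,1)$ I would first establish, for $u$ as in the statement,
\begin{equation*}
C_{N,s}\iint_{\R^{2N}}\frac{|u(x)-u(y)|^2}{|x-y|^{N+2s}}\mathcal{K}^s_X(x,y)\,dx\,dy=-\int_{\R^N}(-\Delta)^s u\,\nabla u\cdot X\,dx ,
\end{equation*}
whenever the right-hand side makes sense. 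This is the fractional analogue of the identity behind Proposition~\ref{Pohozaev right term}. One way to see it is to consider the flow $\Phi_t$ of $X$ and the deformation $u_t:=u\circ\Phi_t^{-1}$, and to differentiate the Gagliardo energy $[u_t]_s^2$ at $t=0$. The change of variables $x=\Phi_t(x')$, $y=\Phi_t(y')$ produces exactly the kernel $\mathcal K^s_X$, through the Jacobians $\mathrm{div}X(x)+\mathrm{div}X(y)$ and through the derivative of $|\Phi_t(x)-\Phi_t(y)|^{-N-2s}$. On the other hand, the derivative of $[u_t]_s^2$ equals $-2\int (-\Delta)^s u\,\nabla u\cdot X$, since $\frac{d}{dt}u_t|_{t=0}=-\nabla u\cdot X$.

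To make this rigorous I would mollify: set $u_\varepsilon:=u*\rho_\varepsilon$, prove the identity for the smooth $u_\varepsilon$ via the flow computation (or a direct symmetrization), and then let $\varepsilon\to0$. On the left-hand side, $|\mathcal K^s_X(x,y)|\le C\|X\|_{C^1}$ uniformly in $s$, so convergence follows from $u_\varepsilon\to u$ in $\H$.

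The key point is to integrate in $s$ before passing to the limit, because the individual $(-\Delta)^s u$ need not be controlled separately. Integrating the smooth identity in $d\mu(s)$ over $(0,1)$ gives a left-hand side equal to the first term of \eqref{Pohozaev on superposition operator} with $u_\varepsilon$ in place of $u$; this passes to the limit using the $\H$-norm and the boundedness of $C_{N,s}$ from Lemma~\ref{rem:normalization_constant_analysis}. The right-hand side becomes $-\int \mathcal L_\mu^{(0,1)} u_\varepsilon\,\nabla u_\varepsilon\cdot X$.

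The atom at $0$ contributes $\mu(\{0\})\,u$. Here $\int u\,\nabla u\cdot X=-\frac12\int u^2\,\mathrm{div}X$, by integrating by parts and using $u\in W^{1,p}_{loc}\cap L^2$ (again through mollification). The atom at $1$ contributes $-\mu(\{1\})\Delta u$, handled by the classical Rellich--Pohozaev computation $-\int\Delta u\,\nabla u\cdot X=\int\partial_iu\,\partial_ju\,\partial_jX_i-\frac12\int|\nabla u|^2\mathrm{div}X$, valid because $u\in H^1$ when $\mu(\{1\})>0$. Adding the three contributions reproduces \eqref{Pohozaev on superposition operator}.

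The main obstacle is the limit $\varepsilon\to0$ on the right-hand side, that is, showing $\int \mathcal L_\mu u_\varepsilon\,\nabla u_\varepsilon\cdot X\to\int\mathcal L_\mu u\,\nabla u\cdot X$. I would use $\mathcal L_\mu u_\varepsilon=(\mathcal L_\mu u)*\rho_\varepsilon$, which follows from Fubini and the summability assumption \eqref{bcwiytr84y8tyihsWYU8765}; that assumption guarantees the pointwise second-difference representation is absolutely integrable on $B_{R+1}$, so one may exchange $d\mu(s)$, the $dy$ integral and the convolution. Assumption \eqref{bcwiytr84y8tyihsWYU876500} then gives $(\mathcal L_\mu u)*\rho_\varepsilon\to\mathcal L_\mu u$ in $L^q(B_R)$, while $u\in W^{1,p}_{loc}$ gives $\nabla u_\varepsilon\to\nabla u$ in $L^p(B_R)$. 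The product therefore converges by Hölder's inequality, and this $L^q$--$L^p$ duality is exactly why the hypotheses are paired this way. The delicate point is justifying the commutation of $\mathcal L_\mu$ with mollification for a merely weak-sense operator applied to a nonsmooth $u$; I would check it by testing against $C^\infty_0$ functions and invoking \eqref{bcwiytr84y8tyihsWYU8765} together with the uniform bound on $C_{N,s}$.
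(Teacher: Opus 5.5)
Your proposal is correct and follows essentially the paper's route. Like the paper, you prove the identity for smooth functions, mollify, pass to the limit on the left-hand side through convergence in $H^\mu(\mathbb{R}^N)$ (plus $L^2$ and $H^1$ for the atoms), and on the right-hand side use $\mathcal{L}_\mu(u*\rho_\varepsilon)=(\mathcal{L}_\mu u)*\rho_\varepsilon$ (Fubini plus the summability hypothesis) together with the $L^q$--$L^p$ pairing. The differences are minor: the paper proves the smooth case by truncation and the divergence theorem with a vanishing boundary term rather than by differentiating along the flow of $X$, and it also multiplies by a cutoff $\tau_\varepsilon$ to work in $C^\infty_0$. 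That cutoff costs the paper an extra uniform estimate of $\mathcal{L}_\mu u_\varepsilon-\mathcal{L}_\mu(u*\varphi_\varepsilon)$ on $B_R$, which your uncut mollifier avoids; the price is checking the smooth identity for $u*\rho_\varepsilon$, which is harmless since that function is bounded, lies in $L^2$ and has bounded derivatives.
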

 
Proposition~\ref{pohozaev superposition} may be viewed as an extension of~\cite[Proposition~6.3]{CGT1} to the present setting. We emphasize, however, that the proof given below does not follow from that of~\cite[Proposition~6.3]{CGT1}. Indeed, our regularity assumptions on~$u$ are substantially more general. In fact, the case~$\mu:=\delta_s$ for some~$s\in(0,1)$, which is treated in~\cite[Proposition~6.3]{CGT1}, arises as a direct corollary of Proposition~\ref{pohozaev superposition} here, replacing the assumption that $u \in L^2(\R^N)$ with the summability condition $(2.10)$
in~\cite[Proposition~2.1]{CGT1}.

\begin{remark} We point out that if
\begin{equation*}
\iiint_{[0,1]\times B_R\times\R^N} \left(C_{N,s}\frac{|2u(x)-u(x+y)-u(x-y)|}{|y|^{N+2s}}\right)^q\,d\mu(s)\,dx\,dy<+\infty.
\end{equation*}then both conditions~\eqref{bcwiytr84y8tyihsWYU8765}
and~\eqref{bcwiytr84y8tyihsWYU876500} would be satisfied.
\end{remark}

\begin{proof}[Proof of Proposition~\ref{pohozaev superposition}]
We first prove~\eqref{Pohozaev on superposition operator}
assuming that~$u\in C^\infty_0 (\R^N)$.

For this, we recall the definition in~\eqref{DFGHJbcxm84thdkslkfi7uit} and we use the Dominated Convergence Theorem to see that
\begin{eqnarray*}&&
\int_{(0,1)}\left(C_{N,s} \iint_{\R^{2N}} \frac{|u(x) - u(y)|^2}{|x - y|^{N+2s}} \mathcal{K}^s_X(x, y) \, dx\, dy\right)\, d\mu(s) \\&=&
\int_{(0,1)}\left(C_{N,s}\iint_{\R^{2N}} \frac{|u(x) - u(y)|^2}{|x - y|^{N+2s}}\right.\\ &&\qquad\qquad\left.
\left(\frac{
\big(\mathrm{div}(X)\big)(x)+\big(\mathrm{div}(X)\big)(y)}2-\frac{N+2s}2\frac{(X(x)-X(y))\cdot(x-y)}{|x-y|^2}
\right)\, dx\, dy\right)\, d\mu(s)\\&=&
\int_{(0,1)}\left(C_{N,s}
\lim_{\varepsilon\searrow0}\iint_{{\R^{2N}}\atop{\{|x-y|>\varepsilon\}}} \frac{|u(x) - u(y)|^2}{|x - y|^{N+2s}}\right.\\ &&\qquad\qquad\left.
\left(\frac{
\big(\mathrm{div}(X)\big)(x)+\big(\mathrm{div}(X)\big)(y)}2-\frac{N+2s}2\frac{(X(x)-X(y))\cdot(x-y)}{|x-y|^2}
\right)\, dx\, dy\right)\, d\mu(s)
\\&=&\lim_{\varepsilon\searrow0}
\int_{(0,1)}\left(C_{N,s} \iint_{{\R^{2N}}\atop{\{|x-y|>\varepsilon\}}} \frac{|u(x) - u(y)|^2}{|x - y|^{N+2s}}\right.\\ &&\qquad\qquad\left.
\left(\frac{
\big(\mathrm{div}(X)\big)(x)+\big(\mathrm{div}(X)\big)(y)}2-\frac{N+2s}2\frac{(X(x)-X(y))\cdot(x-y)}{|x-y|^2}
\right)\, dx\, dy\right)\, d\mu(s)
\\&=&
\lim_{\varepsilon\searrow0}
\int_{(0,1)}\left(C_{N,s}\iint_{{\R^{2N}}\atop{\{|x-y|>\varepsilon\}}} 
\frac{|u(x) - u(y)|^2}{|x - y|^{N+2s}}\right.\\ &&\qquad\qquad\left.
\left(\big(\mathrm{div}(X)\big)(x)-(N+2s)\frac{X(x)\cdot(x-y)}{|x-y|^2}
\right)\, dx\, dy\right)\, d\mu(s).
\end{eqnarray*}

We now observe that
\begin{eqnarray*} \mathrm{div}_x\left(\frac{X(x)}{|x-y|^{N+2s}}\right)
= \frac{ \big( \mathrm{div}(X)\big)(x)}{|x-y|^{N+2s}} -(N+2s)\frac{X(x)\cdot (x-y)}{|x-y|^{N+2s+2}}
\end{eqnarray*}
and therefore
\begin{equation}\label{PREWQERT90843tihlkgewsgt9p54}\begin{split}&
\int_{(0,1)}\left(C_{N,s}
\iint_{\R^{2N}} \frac{|u(x) - u(y)|^2}{|x - y|^{N+2s}} \mathcal{K}^s_X(x, y) \, dx\, dy\right)\, d\mu(s) \\&=
\lim_{\varepsilon\searrow0}
\int_{(0,1)}\left(C_{N,s}\iint_{{\R^{2N}}\atop{\{|x-y|>\varepsilon\}}} 
|u(x) - u(y)|^2 \mathrm{div}_x\left(\frac{X(x)}{|x-y|^{N+2s}}\right)\, dx\, dy\right)\, d\mu(s)\\
&=\lim_{\varepsilon\searrow0}
\int_{(0,1)}\left(C_{N,s}\iint_{{\R^{2N}}\atop{\{|x-y|>\varepsilon\}}} \mathrm{div}_x\left(
\frac{|u(x) - u(y)|^2  X(x)}{|x-y|^{N+2s}}\right)\, dx\, dy\right)\, d\mu(s)\\&\qquad
-\int_{(0,1)}\left( 2C_{N,s}\iint_{{\R^{2N}}\atop{\{|x-y|>\varepsilon\}}} 
\frac{(u(x) - u(y))\nabla u(x)\cdot  X(x)}{|x-y|^{N+2s}}\, dx\, dy\right)\, d\mu(s)\\
&=\lim_{\varepsilon\searrow0}
\int_{(0,1)}\left(C_{N,s}\iint_{\R^{N}\times\partial B_\varepsilon(y)}
\frac{|u(x) - u(y)|^2  X(x)\cdot(x-y)}{\varepsilon^{N+2s+1}}\, d{\mathcal{H}}^{n-1}_x\, dy\right)\, d\mu(s)\\&\qquad
-\int_{(0,1)}\left( 2C_{N,s}\iint_{{\R^{2N}}\atop{\{|x-y|>\varepsilon\}}} 
\frac{(u(x) - u(y))\nabla u(x)\cdot  X(x)}{|x-y|^{N+2s}}\, dx\, dy\right)\, d\mu(s)
.\end{split}
\end{equation}

Moreover, for any~$\varepsilon\in(0,1)$, \begin{equation}\label{vnklewr8uy4987ty84wygtuhedwsPOIU}
{\mbox{the map $(s,x,y)\longmapsto
2C_{N,s}\chi_{\R^N\setminus B_\varepsilon(x)}(y)\displaystyle \frac{(u(x) - u(y))\nabla u(x)\cdot  X(x)}{|x-y|^{N+2s}}$ belongs to~$ L^1\big((0,1)\times\R^{2N}\big)$.}}\end{equation}
Indeed, we suppose that the supports of~$u$ and~$X$ are contained in~$B_R$ for some~$R>0$ and we point out that, for all~$x\in B_R$,
\begin{eqnarray*}&&\left|
\int_{\R^N\setminus B_\varepsilon(x)} 
\frac{(u(x) - u(y)) }{|x-y|^{N+2s}}\,dy\right|\le
\int_{\R^N\setminus B_\varepsilon(x)} 
\frac{|u(x) - u(y)|}{|x-y|^{N+2s}}\,dy
\\&&\qquad\le 2\| u\|_{L^\infty(\R^N)} {{\mathcal{H}}^{n-1}(\partial B_1)}\int_\varepsilon^{+\infty}\frac{d\rho}{\rho^{1+2s}}
=\frac{\| u\|_{L^\infty(\R^N)}{\mathcal{H}}^{n-1}(\partial B_1)}{s\varepsilon^{2s}}\le\frac{\| u\|_{L^\infty(\R^N)}{\mathcal{H}}^{n-1}(\partial B_1)}{s\varepsilon^{2}}
.
\end{eqnarray*}
Consequently,
\begin{eqnarray*}&&\left|
\iint_{{\R^{2N}}\atop{\{|x-y|>\varepsilon\}}} 
\frac{(u(x) - u(y))\nabla u(x)\cdot  X(x)}{|x-y|^{N+2s}}\,dx\,dy\right|\\&&\quad\le
\|\nabla u\|_{L^\infty(\R^N,\R^N)} \|X\|_{L^\infty(\R^N,\R^N)}
\iint_{B_R\times (\R^N\setminus B_\varepsilon(x))} 
\frac{|u(x) - u(y)|}{|x-y|^{N+2s}}\,dx\,dy\\&&\quad\le \frac{C R^N
\| u\|^2_{C^1(\R^N)} \|X\|_{L^\infty(\R^N,\R^N)}}{s\varepsilon^{2}},
\end{eqnarray*} for some dimensional constant~$C>0$.

Accordingly, recalling
the definition of~$C_{N,s}$ in~\eqref{constant_definition} and Lemma~\ref{rem:normalization_constant_analysis},
we gather that
\begin{eqnarray*}&&\iiint_{(0,1)\times\R^{2N}}
2C_{N,s}\chi_{\R^N\setminus B_\varepsilon(x)}(y)\displaystyle \frac{(u(x) - u(y))\nabla u(x)\cdot  X(x)}{|x-y|^{N+2s}}\, dx\,dy\,d\mu(s)
\\&&\qquad\le 
\frac{C R^N
\| u\|^2_{C^1(\R^N)} \|X\|_{L^\infty(\R^N,\R^N)}}{\varepsilon^{2}},
\end{eqnarray*} up to renaming~$C$.
This establishes~\eqref{vnklewr8uy4987ty84wygtuhedwsPOIU}.

Hence, thanks to~\eqref{vnklewr8uy4987ty84wygtuhedwsPOIU} we can employ Fubini-Tonelli Theorem and, exploiting also a symmetrization argument and the Dominated Convergence Theorem, we find that
\begin{equation}\label{WQERT90843tihlkgewsgt9p54}\begin{split}&\lim_{\varepsilon\searrow0}
\int_{(0,1)}\left( 2C_{N,s}\iint_{{\R^{2N}}\atop{\{|x-y|>\varepsilon\}} } 
\frac{(u(x) - u(y))\nabla u(x)\cdot  X(x)}{|x-y|^{N+2s}}\, dx\, dy\right)\, d\mu(s)
\\&=\lim_{\varepsilon\searrow0}\int_{\R^N}\left(
\int_{(0,1)}\left( 2C_{N,s}\int_{\R^N\setminus B_\varepsilon(x)} 
\frac{u(x) - u(y)}{|x-y|^{N+2s}}\, dy\right)\, d\mu(s)\right)\nabla u(x)\cdot  X(x)\,dx\\
&= \lim_{\varepsilon\searrow0}\int_{\R^N}\left(
\int_{(0,1)}\left( C_{N,s}\int_{\R^N\setminus B_\varepsilon(x)} 
\frac{2u(x) - u(x+y)-u(x-y)}{|x-y|^{N+2s}}\, dy\right)\, d\mu(s)\right)\nabla u(x)\cdot  X(x)\,dx\\
&=\int_{\R^N}\left(
\int_{(0,1)}\left( C_{N,s}\int_{\R^N } 
\frac{2u(x) - u(x+y)-u(x-y)}{|x-y|^{N+2s}}\, dy\right)\, d\mu(s)\right)\nabla u(x)\cdot  X(x)\,dx
\\&=\int_{\R^N}\left(
\int_{(0,1)}(-\Delta)^s u(x)\, d\mu(s)\right)\nabla u(x)\cdot  X(x)\,dx
.\end{split}
\end{equation}

Furthermore, changing variable~$z:=x-y$, we write\begin{eqnarray*}&&
\iint_{\R^{N}\times\partial B_\varepsilon(y)}
\frac{|u(x) - u(y)|^2  X(x)\cdot(x-y)}{\varepsilon^{N+2s+1}}\, d{\mathcal{H}}^{n-1}_x\, dy
\\&=&\iint_{\R^{N}\times\partial B_\varepsilon}
\frac{|u(y+z) - u(y)|^2  X(y+z)\cdot z}{{\varepsilon^{N+2s+1}}}\, d{\mathcal{H}}^{n-1}_z\, dy\\
&=&\frac12\iint_{\R^{N}\times\partial B_\varepsilon}
\frac{|u(y+z) - u(y)|^2  X(y+z)\cdot z}{{\varepsilon^{N+2s+1}}}\, d{\mathcal{H}}^{n-1}_z\, dy
\\&&\qquad-\frac12\iint_{\R^{N}\times\partial B_\varepsilon}
|u(y-z) - u(y)|^2  X(y-z)\cdot z\, d{\mathcal{H}}^{n-1}_z\, dy\\
&=&\frac12\iint_{\R^{N}\times\partial B_\varepsilon}
\frac{|u(y+z) - u(y)|^2 \big( X(y+z)-X(y-z)\big)\cdot z}{{\varepsilon^{N+2s+1}}}\, d{\mathcal{H}}^{n-1}_z\, dy
\\&&\qquad-\frac12\iint_{\R^{N}\times\partial B_\varepsilon}
\frac{\Big(|u(y-z) - u(y)|^2  -|u(y+z) - u(y)|^2\Big)
X(y-z)\cdot z}{{\varepsilon^{N+2s+1}}}\, d{\mathcal{H}}^{n-1}_z\, dy
.
\end{eqnarray*}
We notice that
\begin{eqnarray*}
|u(y-z) - u(y)|^2  -|u(y+z) - u(y)|^2=
\big(u(y-z) +u(y+z) -2 u(y)\big)\big(u(y-z)-u(y+z) \big),
\end{eqnarray*} hence
\begin{eqnarray*}&&\left|
\iint_{\R^{N}\times\partial B_\varepsilon(y)}
\frac{|u(x) - u(y)|^2  X(x)\cdot(x-y)}{\varepsilon^{N+2s+1}}\, d{\mathcal{H}}^{n-1}_x\, dy\right|\\
&&\quad\le CR^N \|u\|_{C^2(\R^N)}^2\|X\|_{L^\infty(\R^N,\R^N)}
\int_{\partial B_\varepsilon}| z|^4\, d{\mathcal{H}}^{n-1}_z\le
CR^N \|u\|_{C^2(\R^N)}^2\|X\|_{L^\infty(\R^N,\R^N)}{\varepsilon^{2-2s}},
\end{eqnarray*}
for some dimensional constant~$C>0$ (possibly changing step after step).

As a result,
\begin{equation}\label{nvkjew8to76439823456gerhg}\begin{split}&\left|
\int_{(0,1)}\left(C_{N,s}\iint_{\R^{N}\times\partial B_\varepsilon(y)}
\frac{|u(x) - u(y)|^2  X(x)\cdot(x-y)}{\varepsilon^{N+2s+1}}\, d{\mathcal{H}}^{n-1}_x\, dy\right)\, d\mu(s)\right|
\\&\qquad\qquad\le C\int_{(0,1)}C_{N,s} \varepsilon^{2-2s}\, d\mu(s),
\end{split}\end{equation}
where now~$C$ may also depend on~$u$ and~$X$.

In light of Lemma~\ref{rem:normalization_constant_analysis}, we see that
$$ \left|C_{N,s} \varepsilon^{2-2s}\right|\le C,$$
and also, for any~$s\in(0,1)$,
$$ \lim_{\varepsilon\searrow0}C_{N,s} \varepsilon^{2-2s}=0.$$
We can thereby make use of the Dominated Convergence Theorem and infer from~\eqref{nvkjew8to76439823456gerhg}
that
$$ \lim_{\varepsilon\searrow0}
\int_{(0,1)}\left(C_{N,s}\iint_{\R^{N}\times\partial B_\varepsilon(y)}
\frac{|u(x) - u(y)|^2  X(x)\cdot(x-y)}{\varepsilon^{N+2s+1}}\, d{\mathcal{H}}^{n-1}_x\, dy\right)\, d\mu(s)=0.$$

Using this and~\eqref{WQERT90843tihlkgewsgt9p54} into~\eqref{PREWQERT90843tihlkgewsgt9p54} we thus conclude that
\begin{equation}\label{cbnxwuwri4itk764583fjdsm}\begin{split}&
\int_{(0,1)}\left(C_{N,s}
\iint_{\R^{2N}} \frac{|u(x) - u(y)|^2}{|x - y|^{N+2s}} \mathcal{K}^s_X(x, y) \, dx\, dy\right)\, d\mu(s) 
\\&\qquad=
-\int_{\R^N}\left(
\int_{(0,1)}(-\Delta)^s u(x)\, d\mu(s)\right)\nabla u(x)\cdot  X(x)\,dx
.\end{split}
\end{equation}

We now deal with the cases~$s=1$ and~$s=0$. 
In the first case, we have that
\begin{equation}\label{456vcnmxrui345iDFGHJ65432q}\begin{split}&
\int_{\R^N} \Delta u(x)\, \nabla u (x)\cdot X(x)\, dx
=-\int_{\R^N} \nabla u(x)\cdot\nabla\big( \nabla u \cdot X\big)(x)\, dx
\\&\qquad=-\int_{\R^N} \partial_j u(x)\partial_j\big( \partial_i u X_i\big)(x)\, dx
\\&\qquad
=-\int_{\R^N} \partial_j u(x) \partial_{ij} u(x) X_i(x)\, dx
-\int_{\R^N} \partial_j u(x)\partial_i u(x) \partial_j X_i(x)\, dx\\&\qquad
=-\frac12\int_{\R^N} \partial_i |\nabla u|^2(x) X_i(x)\, dx
-\int_{\R^N} \partial_j u(x)\partial_i u(x) \partial_j X_i(x)\, dx\\&\qquad=
\frac12\int_{\R^N} |\nabla u(x)|^2 \,\mathrm{div} X(x)\, dx
-\int_{\R^N} \partial_j u(x)\partial_i u(x) \partial_j X_i(x)\, dx
.\end{split}\end{equation}

In the case~$s=0$, we have
\begin{eqnarray*}
&&\int_{\R^N} u(x)\, \nabla u (x)\cdot X(x)\, dx=
\frac12\int_{\R^N}  \nabla (u^2) (x)\cdot X(x)\, dx=
-\frac12\int_{\R^N} u^2 (x)\,\mathrm{div} X(x)\, dx.
\end{eqnarray*}
Gathering this, \eqref{cbnxwuwri4itk764583fjdsm} and~\eqref{456vcnmxrui345iDFGHJ65432q}, we obtain~\eqref{Pohozaev on superposition operator} for~$u\in C^\infty_0(\R^N)$.

We will now complete the proof of~\eqref{Pohozaev on superposition operator} via a density argument.
For this, let~$\varphi\in C^\infty_0(B_1,[0,1])$ be such that~$\|\varphi\|_{L^1(\R^N)}=1$ and, for
any~$\varepsilon\in(0,1)$, let~$\varphi_\varepsilon(x):=\frac1{\varepsilon^N}\varphi\left(\frac{x}{\varepsilon}\right)$.
Let also~$\tau \in C^\infty_0(B_2,[0,1])$ be such that~$\tau=1$ in~$B_1$ and let~$\tau_\varepsilon(x):=\tau(\varepsilon x)$.

We now take~$u$ as in the statement of Proposition~\ref{pohozaev superposition} and define~$u_\varepsilon:=(u*\varphi_\varepsilon)\tau_\varepsilon$. We point out that~$u_\varepsilon\in C^\infty_0(\R^N)$ and therefore, by~\eqref{Pohozaev on superposition operator} applied to~$u_\varepsilon$,
\begin{equation}\label{bvcwiuteuiw4yt798kjhbfdskSDFGH8765} \begin{split}&
\int_{(0,1)}\left(C_{N,s} \iint_{\R^{2N}}
\frac{|u_\varepsilon(x)-u_\varepsilon(y)|^2}{|x-y|^{N+2s}} \mathcal{K}_{X}^s(x,y)\, dx\, dy\right)\,d\mu(s) 
+ \frac{\mu(\{0\})}2\int_{\R^N} u_\varepsilon^2 (x)\,\mathrm{div} X(x)\, dx\\&\qquad+\mu(\{1\})\left(\frac12\int_{\R^N} |\nabla u_\varepsilon(x)|^2 \,\mathrm{div} X(x)\, dx
-\int_{\R^N} \partial_j u_\varepsilon(x)\partial_i u_\varepsilon(x) \partial_j X_i(x)\, dx
\right)\\&\qquad\qquad
= - \int_{\R^N} \mathcal{L}_\mu u_\varepsilon(x)\, \nabla u_\varepsilon \cdot X\, dx.\end{split}\end{equation}

Moreover, in light of the construction of the approximating sequence in~\cite[Appendix~B]{DPLSV}, we know that
$$ \lim_{\varepsilon\searrow0} \|u-u_\varepsilon\|_{\mu}=0.$$
As a result,
\begin{eqnarray*}&&\lim_{\varepsilon\searrow0}
\left|\int_{(0,1)}\left(C_{N,s} \iint_{\R^{2N}}
\frac{|u(x)-u(y)|^2-|u_\varepsilon(x)-u_\varepsilon(y)|^2}{|x-y|^{N+2s}} \mathcal{K}_{X}^s(x,y)\, dx\, dy\right)\,d\mu(s) 
\right|\\&&\;\le
C\|X\|_{C^1(\R^N,\R^N)}
\lim_{\varepsilon\searrow0}
\int_{(0,1)}\left(C_{N,s}\iint_{\R^{2N}}
\frac{\big||u(x)-u(y)|^2-|u_\varepsilon(x)-u_\varepsilon(y)|^2\big|}{|x-y|^{N+2s}} \, dx\, dy\right)\,d\mu(s) \\&&\;\le
C\|X\|_{C^1(\R^N,\R^N)}
\lim_{\varepsilon\searrow0}\|u-u_\varepsilon\|_{\mu}^2=0.
\end{eqnarray*}
Also, if~$\mu(\{0\})\neq0$,
\begin{eqnarray*}&&\lim_{\varepsilon\searrow0}
\left|\int_{\R^N}\big( u^2 (x)-u_\varepsilon^2(x)\big)\,\mathrm{div} X(x)\, dx\right|\le \|X\|_{C^1(\R^N,\R^N)}
\lim_{\varepsilon\searrow0}
\int_{\R^N}\big|u^2 (x)-u_\varepsilon^2(x)\big|\, dx\\&&\qquad\le
\|X\|_{C^1(\R^N,\R^N)}
\lim_{\varepsilon\searrow0}\|u-u_\varepsilon\|_{L^2(\R^N)}^2=0.
\end{eqnarray*}
Furthermore, if~$\mu(\{1\})\neq0$,
\begin{eqnarray*}&&\lim_{\varepsilon\searrow0}\left|
\int_{\R^N}\big( |\nabla u(x)|^2-|\nabla u_\varepsilon(x)|^2\big) \,\mathrm{div} X(x)\, dx\right|\le\|X\|_{C^1(\R^N,\R^N)}
\lim_{\varepsilon\searrow0}\int_{\R^N}\big| |\nabla u(x)|^2-|\nabla u_\varepsilon(x)|^2\big|\, dx\\&&\qquad\le
\|X\|_{C^1(\R^N,\R^N)}\|\nabla (u-u_\varepsilon)\|_{L^2(\R^N)}^2=0
\end{eqnarray*}
and
\begin{eqnarray*}&&\lim_{\varepsilon\searrow0}\left|
\int_{\R^N} \Big(\partial_j u(x)\partial_i u(x) -\partial_j u_\varepsilon(x)\partial_i u_\varepsilon(x)
\Big)\partial_j X_i(x)\, dx\right|\\&&\;\le\|X\|_{C^1(\R^N,\R^N)}
\lim_{\varepsilon\searrow0}
\int_{\R^N} \Big|\partial_j u(x)\partial_i u(x) -\partial_j u_\varepsilon(x)\partial_i u_\varepsilon(x)
\Big|\,dx\\&&\;\le\|X\|_{C^1(\R^N,\R^N)}
\lim_{\varepsilon\searrow0}
\int_{\R^N}|\partial_j u(x)|\,\big|\partial_i u(x) - \partial_i u_\varepsilon(x)
\big|\,dx+\int_{\R^N} |\partial_i u_\varepsilon(x) |\,\big|\partial_j u(x)-\partial_j u_\varepsilon(x)\big|\,dx
\\&&\;\le\|X\|_{C^1(\R^N,\R^N)}
\lim_{\varepsilon\searrow0}
\int_{\R^N}|\nabla  u(x)|\,\big|\nabla u(x) - \nabla u_\varepsilon(x)
\big|\,dx+\int_{\R^N} |\nabla u_\varepsilon(x) |\,\big|\nabla u(x)-\nabla u_\varepsilon(x)\big|\,dx
\\&&\;\le\|X\|_{C^1(\R^N,\R^N)}
\lim_{\varepsilon\searrow0}\|\nabla u\|_{L^2(\R^N)}\|\nabla (u-u_\varepsilon)\|_{L^2(\R^N)}
+\|\nabla u_\varepsilon\|_{L^2(\R^N)}\|\nabla (u-u_\varepsilon)\|_{L^2(\R^N)}\\&&\;=0.
\end{eqnarray*}

Gathering these pieces of information, we obtain the desired convergence of the left-hand side of~\eqref{bvcwiuteuiw4yt798kjhbfdskSDFGH8765}.
Hence, we now focus on establishing the desired convergence of the right-hand side.

To this end, we claim that
\begin{equation}\label{9843uigSDFGHJ043-ujgthtrh65oewjh4hgfdwef}
{\mbox{$\mathcal{L}_\mu u_\varepsilon$ converges to~$\mathcal{L}_\mu u$ in~$L^q_{loc}(\R^N)$ as~$\varepsilon\searrow0$.}}
\end{equation}
Indeed, we know that
\begin{equation}\label{bclw85t74hfkjhfajkbjbvbjdmslk34567hblks}
{\mbox{$(\mathcal{L}_\mu u)*\varphi_\varepsilon$ converges to~$\mathcal{L}_\mu u$ in~$L^q_{loc}(\R^N)$ as~$\varepsilon\searrow0$,}}
\end{equation} see e.g.~\cite[Theorem~9.6]{WZ2015}.

Moreover, thanks to the assumption in~\eqref{bcwiytr84y8tyihsWYU8765}, we can employ Fubini-Tonelli Theorem and obtain that
\begin{eqnarray*}&&
(\mathcal{L}_\mu u)*\varphi_\varepsilon(x)=
\int_{B_\varepsilon}\left( \int_{[0,1]}(-\Delta)^s u(x-y) \, d\mu(s)\right)\varphi_\varepsilon(y)\,dy\\&&\quad
= \iint_{[0,1]\times \R^N}C_{N,s}\left(\int_{B_\varepsilon}\big(2u(x-y)-u(x-y+z)-u(x-y-z)\big) \, \varphi_\varepsilon(y)\,dy\right)
\,\frac{dz}{|z|^{N+2s}}\,d\mu(s)\\
&&\quad=\iint_{[0,1]\times \R^N}C_{N,s}\frac{2u*\varphi_\varepsilon(x)-u*\varphi_\varepsilon(x+z)-u*\varphi_\varepsilon(x-z)}{|z|^{N+2s}}\,dz\,d\mu(s)\\
&&\quad=\mathcal{L}_\mu (u*\varphi_\varepsilon)(x).
\end{eqnarray*}
From this and~\eqref{bclw85t74hfkjhfajkbjbvbjdmslk34567hblks} we deduce that
\begin{equation}\label{questanoelanhnbfiuert765}
{\mbox{$\mathcal{L}_\mu (u*\varphi_\varepsilon)$ converges to~$\mathcal{L}_\mu u$ in~$L^q_{loc}(\R^N)$ as~$\varepsilon\searrow0$,}}
\end{equation}

Furthermore, given~$R>0$, we have that~$u_\varepsilon(x)=u*\varphi_\varepsilon(x)$
for all~$x\in B_R$ and~$\varepsilon$ sufficiently small, and therefore 
\begin{eqnarray*}&&\left|\mathcal{L}_\mu u_\varepsilon(x)-
\mathcal{L}_\mu (u*\varphi_\varepsilon)(x)\right|
\\&&\quad=\left|\int_{[0,1]}
C_{N,s} \left(\int_{\R^N}\frac{ u*\varphi_\varepsilon(x+y)-u*\varphi_\varepsilon(x-y)-u_\varepsilon(x+y)-u_\varepsilon(x-y) }{|y|^{N+2s} }\,dy \right)\,d\mu(s)\right|\\&&\quad\le
\int_{[0,1]}
C_{N,s} \left(\int_{\R^N}\frac{ |u*\varphi_\varepsilon(x+y)(1-\tau_\varepsilon(x+y))|+|u*\varphi_\varepsilon(x-y)(1-\tau_\varepsilon(x-y)| }{|y|^{N+2s} }\,dy \right)\,d\mu(s)\\
&&\quad=2\int_{[0,1]}
C_{N,s} \left(\int_{\R^N}\frac{ |u*\varphi_\varepsilon(z)(1-\tau_\varepsilon(z))|}{|x-z|^{N+2s} }\,dz \right)\,d\mu(s)\\
&&\quad\le 2\int_{[0,1]}
C_{N,s} \left(\int_{\R^N\setminus B_{1/\varepsilon}}\frac{ |u*\varphi_\varepsilon(z)|}{|x-z|^{N+2s} }\,dz \right)\,d\mu(s).
\end{eqnarray*}
Now, if~$x\in B_R$ and~$z\in\R^N\setminus B_{1/\varepsilon}$, if~$\varepsilon$ is sufficiently small, we have that~$ |x-z|\ge |z|-|x|\ge |z|/2$,
thus
\begin{eqnarray*}\left|\mathcal{L}_\mu u_\varepsilon(x)-
\mathcal{L}_\mu (u*\varphi_\varepsilon)(x)\right|&\le& 2\int_{[0,1]}
C_{N,s} 2^{N+2s}\left(\int_{\R^N\setminus B_{1/\varepsilon}}\frac{ |u*\varphi_\varepsilon(z)|}{|z|^{N+2s} }\,dz \right)\,d\mu(s).
\end{eqnarray*}

Hence, by the H\"older inequality and the Young's Convolution Theorem (see e.g.~\cite[Theorem~9.2]{WZ2015}),
\begin{eqnarray*}&&\left|\mathcal{L}_\mu u_\varepsilon(x)-
\mathcal{L}_\mu (u*\varphi_\varepsilon)(x)\right|\\&&\quad\le 2^{N+3}\int_{[0,1]}
C_{N,s}\left(\int_{\R^N\setminus B_{1/\varepsilon}}|u*\varphi_\varepsilon(z)|^2\,dz\right)^{1/2}\left(
\int_{\R^N\setminus B_{1/\varepsilon}} \frac{dz}{|z|^{2(N+2s)} }\right)^{1/2}\,d\mu(s)\\&&\quad\le
C\int_{[0,1]}
\frac{C_{N,s}\, \varepsilon^{\frac{N+4s}2 }}{\sqrt{N+4s}}\left(\int_{\R^N\setminus B_{1/\varepsilon}}|u*\varphi_\varepsilon(z)|^2\,dz\right)^{1/2}
\,d\mu(s)\\
&&\quad\le
C \varepsilon^{\frac{N}2 }\|u\|_{L^2(\R^N)} \int_{[0,1]}C_{N,s}\,d\mu(s),
\end{eqnarray*}
for some dimensional constant~$C>0$ (possibly changing from line to line).

Accordingly, recalling also Lemma~\ref{rem:normalization_constant_analysis}, we conclude that, for all~$R>0$,
$$\lim_{\varepsilon\searrow0}\sup_{x\in B_R}\left| \mathcal{L}_\mu u_\varepsilon(x)-
\mathcal{L}_\mu (u*\varphi_\varepsilon)(x)\right|=0.$$
From this observation and~\eqref{questanoelanhnbfiuert765}, we gather that, for all~$R>0$,
\begin{eqnarray*}\lim_{\varepsilon\searrow0}
\| \mathcal{L}_\mu u-\mathcal{L}_\mu u_\varepsilon\|_{L^q(B_R)}\le \lim_{\varepsilon\searrow0}
\| \mathcal{L}_\mu u-\mathcal{L}_\mu (u*\varphi_\varepsilon)\|_{L^q(B_R)}+\| \mathcal{L}_\mu (u*\varphi_\varepsilon) -\mathcal{L}_\mu u_\varepsilon\|_{L^q(B_R)}=0.
\end{eqnarray*}
This establishes~\eqref{9843uigSDFGHJ043-ujgthtrh65oewjh4hgfdwef}.

We now suppose that the support of~$X$ is contained in~$B_R$ for some~$R>0$.
Using~\eqref{9843uigSDFGHJ043-ujgthtrh65oewjh4hgfdwef}, we thus conclude that
\begin{eqnarray*}&&\lim_{\varepsilon\searrow0}
\left|\int_{\R^N} \mathcal{L}_\mu u (x)\, \nabla u  \cdot X\, dx-
\int_{\R^N} \mathcal{L}_\mu u_\varepsilon(x)\, \nabla u_\varepsilon \cdot X\, dx\right|\\&&\quad
\le \lim_{\varepsilon\searrow0}
\left|\int_{\R^N} \mathcal{L}_\mu u (x)\, \nabla (u-u_\varepsilon)  \cdot X\, dx\right|+\left|
\int_{\R^N} \mathcal{L}_\mu (u-u_\varepsilon)(x)\, \nabla u_\varepsilon \cdot X\, dx\right|\\&&\quad\le\|X\|_{L^\infty(\R^N,\R^N)}\left(
\lim_{\varepsilon\searrow0}
\int_{B_R} |\mathcal{L}_\mu u (x)|\, |\nabla (u-u_\varepsilon) |\, dx+
\int_{B_R} |\mathcal{L}_\mu (u-u_\varepsilon)(x)|\,| \nabla u_\varepsilon|\, dx\right)\\&&\quad\le
\|X\|_{L^\infty(\R^N,\R^N)}\left(
\lim_{\varepsilon\searrow0}\|\mathcal{L}_\mu u \|_{L^q(B_R)}\|\nabla (u-u_\varepsilon) \|_{L^p(B_R)}+
\|\mathcal{L}_\mu (u-u_\varepsilon)\|_{L^q(B_R)}\|\nabla u_\varepsilon\|_{L^p(B_R)}\right)\\&&\quad=0.
\end{eqnarray*}
This entails the desired convergence also of the right-hand side of~\eqref{bvcwiuteuiw4yt798kjhbfdskSDFGH8765}, and 
the proof of~\eqref{Pohozaev on superposition operator} is thereby complete.
\end{proof}

With this preliminary work, we can complete the proof of Theorem~\ref{pohozaev theorem}.

\begin{proof}[Proof of Theorem~\ref{pohozaev theorem}]
Let~\( X \in C^1_0(\mathbb{R}^N, \mathbb{R}^N) \).
By Proposition \ref{Pohozaev right term} (used here with~\( H := F(u) \)) and
Proposition \ref{pohozaev superposition}, we obtain that 
\begin{equation*} \begin{split}&
\int_{(0,1)}\left( C_{N,s} \iint_{\R^{2N}}
\frac{|u(x)-u(y)|^2}{|x-y|^{N+2s}} \mathcal{K}_{X}^s(x,y)\, dx\, dy\right)\,d\mu(s) 
+ \frac{\mu(\{0\})}2\int_{\R^N} u^2 (x)\,\mathrm{div} (X)(x)\, dx\\&\qquad+\mu(\{1\})
\left(\frac12\int_{\R^N} |\nabla u(x)|^2 \,\mathrm{div} (X)(x)\, dx
-\int_{\R^N} \partial_j u(x)\partial_i u(x) \partial_j X_i(x)\, dx
\right)\\&\qquad\qquad
= - \int_{\R^N} \mathcal{L}_\mu u\, \nabla u \cdot X\, dx\\
&\qquad\qquad=
 \int_{\R^N} \Big(\beta u-
 (I_\alpha \ast F(u))f(u)\Big) \nabla u \cdot X\, dx\\
 &\qquad\qquad=\frac{\beta}{2}
 \int_{\R^N} \nabla (u^2) \cdot X\, dx-\int_{\R^N}
 (I_\alpha \ast F(u)) \nabla F(u) \cdot X\, dx\\
 &\qquad\qquad=-\frac{\beta}{2}
\int_{\R^N} u^2\,{\mathrm{div}}( X)\, dx+
\iint_{\mathbb{R}^{2N}} I_\alpha(x - y)\,F(u(x))\, F(u(y))\, \mathcal{K}^{-\frac{\alpha}{2}}_X(x,y)\, dx\, dy.
\end{split}\end{equation*}

We now specialize the choice of the vector field, by taking a function~$\varphi\in C^\infty_0(B_2,[0,1])$ with~$\varphi=1$ in~$B_1$ and defining~$\varphi_n(x) := \varphi\left( \frac{x}{n} \right)$ and~$X_n(x) := \varphi_n(x) x$.

We point out that~\( \varphi_n \in C^\infty_0(B_{2n},[0,1])\), \( \varphi_n =1 \) in~\( B_n\) and~\( |x|\,|\nabla \varphi_n(x)| \leq C \) for each~\( x \in \mathbb{R}^N \) and~\( n \in \mathbb{N} \). Also, $\varphi_n\to1$ and~$\nabla\varphi_n\to0$ as~$n\to+\infty$. Hence,
$X_n$ converges to~$x$ as~$n\to+\infty$.

Moreover, we have that~$ \operatorname{div}(X_n) = N \varphi_n + \nabla \varphi_n \cdot x$
and~$\partial_j X_i=\partial_j\varphi_n x_i+\varphi_n\delta_{ij}$.

Therefore, recalling the definition of~$\mathcal{K}_{X}^s$
in~\eqref{DFGHJbcxm84thdkslkfi7uit},
\begin{eqnarray*}&&
\int_{(0,1)}\left(C_{N,s} \iint_{\R^{2N}}
\frac{|u(x)-u(y)|^2}{|x-y|^{N+2s}} \mathcal{K}_{X}^s(x,y)\, dx\, dy\right)\,d\mu(s) \\
&&\quad= \int_{(0,1)}\left(\frac{C_{N,s}N}{2} \iint_{\mathbb{R}^{2N}}\frac{|u(x) - u(y)|^2}{|x - y|^{N + 2s}} \big(\varphi_n(x) + \varphi_n(y)\big)\, dx\,dy\right)\,d\mu(s)  \\
    &&\qquad\quad + \int_{(0,1)}\left( \frac{C_{N,s}}{2} \iint_{\mathbb{R}^{2N}}
    \frac{|u(x) - u(y)|^2}{|x - y|^{N + 2s}} \Big(\nabla \varphi_n(x) \cdot x + \nabla \varphi_n(y) \cdot y\Big) \, dx\,dy\right)\, d\mu (s)\\
    &&\qquad\quad- \int_{(0,1)}\left(  \frac{C_{N,s}(N+2s)}{2} \iint_{\mathbb{R}^{2N}}
    \frac{|u(x) - u(y)|^2}{|x - y|^{N + 2s}}  \frac{(\varphi_n(x)x - \varphi_n(y)y) \cdot (x - y)}{|x - y|^2} \, dx\,dy \right)\,d\mu(s).
\end{eqnarray*}
We can now pass the above equation to the limit as~$n\to+\infty$ using the Dominated Convergence Theorem and we obtain that
\begin{eqnarray*}&
    &\lim_{n \to \infty} \int_{(0,1)}\left(C_{N,s} \iint_{\R^{2N}}
\frac{|u(x)-u(y)|^2}{|x-y|^{N+2s}} \mathcal{K}_{X}^s(x,y)\, dx\, dy\right)\,d\mu(s)\\&&\quad
  =   \int_{(0,1)}\left(  C_{N,s}N \iint_{\mathbb{R}^{2N}}\frac{|u(x) - u(y)|^2}{|x - y|^{N + 2s}}\, dx\,dy\right)\,d\mu(s)  \\
    &&\qquad\quad - \int_{(0,1)}\left(  \frac{C_{N,s}(N+2s)}{2} \iint_{\mathbb{R}^{2N}}
    \frac{|u(x) - u(y)|^2}{|x - y|^{N + 2s}} \, dx\,dy \right)\,d\mu(s)\\&&\quad=
      \int_{(0,1)}\frac{N-2s}{2} [u]_s^2\,d\mu(s).
    \end{eqnarray*}
    
Also, if~$\mu(\{0\})\neq0$,
 \begin{eqnarray*}\lim_{n\to+\infty}  
\frac12\int_{\R^N} u^2 (x)\,\mathrm{div} (X_n)(x)\, dx
=\frac12\int_{\R^N} u^2 (x)\,\Big(N \varphi_n + \nabla \varphi_n \cdot x\Big)\, dx=\frac{N}2\|u\|^2_{L^2(\R^N)}.
\end{eqnarray*}  
Furthermore, if~$\mu(\{1\})\neq0$,
\begin{eqnarray*}&&
\lim_{n\to+\infty}  \frac12\int_{\R^N} |\nabla u(x)|^2 \,\mathrm{div} (X_n)(x)\, dx
-\int_{\R^N} \partial_j u(x)\partial_i u(x) \partial_j X_{n,i}(x)\, dx\\
&&\quad=\lim_{n\to+\infty}  \frac12\int_{\R^N} |\nabla u(x)|^2 \big(N \varphi_n + \nabla \varphi_n \cdot x\big)\, dx
-\int_{\R^N} \partial_j u(x)\partial_i u(x) \big(\partial_j\varphi_n x_i+\varphi_n\delta_{ij}\big)\, dx\\&&\quad=
\frac{N}2\int_{\R^N} |\nabla u(x)|^2\, dx-\int_{\R^N} |\nabla u(x)|^2\, dx\\
&&\quad=\frac{N-2}2\int_{\R^N} |\nabla u(x)|^2\, dx.
\end{eqnarray*}
Gathering these pieces of information, we thereby find that
\begin{eqnarray*}&&\lim_{n\to+\infty}
\int_{(0,1)}\left(C_{N,s} \iint_{\R^{2N}}
\frac{|u(x)-u(y)|^2}{|x-y|^{N+2s}} \mathcal{K}_{X_n}^s(x,y)\, dx\, dy\right)\,d\mu(s) 
+ \frac{\mu(\{0\})}2\int_{\R^N} u^2 (x)\,\mathrm{div} (X_n)(x)\, dx\\&&\qquad+\mu(\{1\})
\left(\frac12\int_{\R^N} |\nabla u(x)|^2 \,\mathrm{div} (X_n)(x)\, dx
-\int_{\R^N} \partial_j u(x)\partial_i u(x) \partial_j X_{n,i}(x)\, dx
\right)\\&&\quad=  \int_{[0,1]}\frac{N-2s}{2} [u]_s^2\,d\mu(s).
\end{eqnarray*}
  
  Similarly (see~\cite[Proposition~6.5]{CGT1}), we see that
\begin{align*}
    &\lim_{n \to \infty}\iint_{\mathbb{R}^{2N}} I_\alpha(x - y)\,F(u(x))\, F(u(y))\, \mathcal{K}^{-\frac{\alpha}{2}}_{X_n}(x,y)\, dx\, dy
    \\ &\quad=  \frac{2N - (N -\alpha)}{2} \iint_{\mathbb{R}^{2N}}I_\alpha(x - y) F(u(x)) F(u(y)) \, dx\,dy \\
    &\quad= \frac{N+\alpha}{2} \int_{\mathbb{R}^N} (I_\alpha * F(u)) F(u) \, dx
\end{align*}
and $$
    \lim_{n \to \infty} \frac{\beta}{2}
\int_{\R^N} u^2\,{\mathrm{div}}( X_n)\, dx=  \frac{N\beta}{2} \|u\|^2_{2}. 
$$

Putting all the pieces together, we obtain the desired identity.
\end{proof}

\section*{Acknowledgments}
Alessandro Cannone is supported by D.M. 2023 
n. 118- PNRR, ``PDEs from Quantum Science", 
CUP: H91I23000500007 and by GNAMPA research project 
``Aspetti qualitativi e analisi di blow-up per problemi 
differenziali ellittici" CUP:  E53C25002010001. Silvia Cingolani is supported by INdAM-GNAMPA.
Serena Dipierro is supported by the Australian Future Fellowship FT230100333 ``New perspectives on nonlocal equations''.
Part of this work was carried out during a visit of SD to the University of Bari, which we thank for the warm hospitality.

\end{document}